\DeclareSymbolFont{cyrletters}{OT2}{wncyr}{m}{n}
\DeclareMathSymbol{\Sha}{\mathalpha}{cyrletters}{"58}
\title[Thakur hypergeometric functions]
{On the period interpretation for some special values of Thakur hypergeometric functions}
\author{Ryotaro Harada}
		\address{
				University of the Ryukyus, Senbaru 1, Nishihara-cho, Nakagami-gun, Okinawa 9010213, Japan.}
			\email{harada@math.u-ryukyu.ac.jp}
\date{May 2, 2022}
\newtheorem{thm}{Theorem}[section]
\newtheorem{prop}[thm]{Proposition}
\theoremstyle{remark}
\subjclass[2020]{11J72, 11J91, 11J93, 11M38}
\keywords{hypergeometric functions, polylogarithms, pre-$t$-motives, linear/algebraic independence}
\theoremstyle{definition}
\newtheorem{defn}[thm]{Definition}
\newtheorem{rem}[thm]{Remark}
\newtheorem{eg}[thm]{Example}
 \DeclareMathOperator{\GL}{GL}
\DeclareMathOperator{\Mat}{Mat}
 \DeclareMathOperator{\wt}{wt}
\DeclareMathOperator{\dep}{dep}
\begin{document}
\bibliographystyle{amsalpha+}

\begin{abstract}
In 1995, Thakur invented and studied positive characteristic analogues of hypergeometric functions. In this paper, we interpret the special values of those functions as periods of a pre-$t$-motive. As a consequence, we show their transcendence and linear independence results by using Chang's refined version of the Anderson--Brownawell--Papanikolas criterion. Furthermore, as by-products, we show some linear/algebraic independence results among the special values of Kochubei multiple polylogarithms according to our period interpretation and the corresponding refined criterion.
\end{abstract}


\maketitle
\tableofcontents
\setcounter{section}{-1}
\section{Introduction}
\subsection{Thakur hypergeometric functions}
Let $\mathbb{F}_q$ be a fixed finite field with $q$ elements, where $q$ is the power of prime number $p$.
    Let $\mathbb{P}^1$ be a projective line defined over $\mathbb{F}_q$ with a fixed point at infinity $\infty\in\mathbb{P}^1(\mathbb{F}_q)$.
    Let $A$ be the ring of regular functions away from $\infty$, with $k$ as its fraction field. Let $k_\infty$ be the completion of $k$ at $\infty$, and let $\mathbb{C}_\infty$ be the completion of a fixed algebraic closure of $k_\infty$. With the variable $\theta$, we can identify $A$ with the polynomial ring $\mathbb{F}_q[\theta]$ and $k$ with the rational function field $\mathbb{F}_q(\theta)$.
Thakur defined and studied the positive characteristic analogues of the classical hypergeometric functions (HGFs) in \cite{T95}. His definition is motivated by Barns integral representation (\cite[Section 3.4]{T95}) of the HGFs. For $D_i:=\prod^{i}_{j=1}(\theta^{q^j}-\theta)^{q^{i-j}}\ (D_0:=1)$ and $L_i:=\prod^{i}_{j=1}(\theta-\theta^{q^j})\ (L_0:=1)$ with $i\in\mathbb{Z}_{\geq 0}$, 
he found the following analogue of the Pochhammer symbols:
\[
	(a)_n:=\begin{cases}
	   		D_{n+a-1}^{q^{-(a-1)}}\quad &\text{if $a\geq 1$},\\	
	   		1/L_{-a-n}^{q^n}\quad &\text{if $0\geq a$ and $-a\geq n$}, \\
	   		0\quad &\text{if $n> -a\geq 0$}.
	   \end{cases} 
\]
Then, based on the characteristic 0 case, he defined the following analogue of HGFs by using the above symbols:
for $a_1, \ldots, a_r\in\mathbb{Z}$ and $b_1, \ldots, b_s\in\mathbb{Z}_{>0}$,
\begin{align}\label{s0hyper}
_rF_s(z):=~_rF_s(a_1, \ldots, a_r;b_1, \ldots, b_s)(z):=\sum_{n\geq 0}\frac{(a_1)_n\cdots (a_r)_n}{(b_1)_n\cdots (b_s)_nD_n}z^{q^n}\in\mathbb{C}_{\infty}\llbracket z\rrbracket.
\end{align}
Throughout this paper, we call these functions the Thakur hypergeometric functions (THGFs) and without loss of generality we assume that $a_{i}\leq a_{j}$ and $b_{i}\leq b_{j}$ for $i\leq j$.
The THGFs $_rF_s(z)$ have three cases of the convergence domain as follows: 
\[
	_rF_s(z) \text{\ are defined for\ }\begin{cases}     
										z=0\quad &\text{\ if $r>s+1$},\\
										z\in\mathbb{C}_{\infty}\quad &\text{\ if $r<s+1$},\\
										z\in\mathbb{C}_{\infty} \text{\ with\ } |z|_{\infty}<q^{\sum_{j=1}^s(b_j-1)-\sum_{j=1}^r(a_j-1)} &\text{\ if $r=s+1$}.
                                       \end{cases}
\] 
Here, $| \cdot |_{\infty}$ is the absolute value for $\mathbb{C}_{\infty}$ such that $|\theta|_{\infty}=q$.
Thakur also showed that THGFs satisfy an analogue of the hypergeometric differential equation by using the Carlitz differential operator $\Delta_a$ and the Carlitz derivative $d_F$. The case of $\Delta_0$ is originated in \cite{Ca35}. They are $\mathbb{F}_q$-linear operators defined for $\mathbb{F}_q$-linear functions $f(z)$ by
\begin{align*}
 \Delta_a\bigr(f(z)\bigl):=f(\theta z)-\theta^{q^{-a}}f(z) \text{\ for $a\in\mathbb{Z}$},\quad d_F\bigl(f(z)\bigr):=\Delta_0\bigl(f(z)\bigr)^{1/q}.
\end{align*}
 
We can consider operator $\Delta_a$ and $d_F$ to be the positive characteristic analogue of $z(d/dz)+a$ and $d/dz$ respectively. For more details and studies, see \cite[\S 3.1]{T95} and \cite{T00}. 

Then, Thakur demonstrated the following differential equation (\cite[(10)]{T95}) which is seen to be an analogue of the hypergeometric differential equation:
\[
	d_F\circ\Delta_{a_1}\circ\cdots\circ\Delta_{a_r}\bigl(~_rF_s(z)\bigr)=\Delta_{b_1-1}\circ\cdots\circ\Delta_{b_s-1}\bigl(~_rF_s(z)\bigr).
\]
Furthermore, he discovered several properties of $~_rF_s(z)$, including the analogue of contiguous relations, the summation formula, specializations to exponential functions, the Bessel functions, the Jacobi and Legendre polynomials in positive characteristic, and the connection to the tensor powers of the Carlitz modules in \cite{T95}.  

The second analogue of the HGF was also defined in \cite{T95} by using the positive characteristic analogue of the binomial coefficients. In this paper, we discuss only the first analogue recalled in \eqref{s0hyper}.

Later, Thakur, Wen, Yao and Zhao (\cite{TWYZ}) obtained a sufficient condition for the special values of $_rF_s(z)\ (r<s+1)$ and an equivalent condition for the special values of $_rF_s(z)\ (r=s+1)$ to be transcendental over $k$. These transcendence results were the consequence of their Diophantine criterion for transcendence in positive characteristic, which generalized Theorem 1 of \cite{Y06}. Moreover, there are some approaches for solving transcendence/linear independence problems via periods (see Definition \ref{defperiod}), as developed by Anderson--Brownawell--Papanikolas \cite{ABP04}, namely, the so-called ABP criterion. For example, Carlitz multiple polylogarithms are firstly defined in \cite{C14} as follows and their special values are known as periods: for $\mathfrak{s}=(s_1, \ldots, s_r)\in\mathbb{Z}_{>0}^r$ and ${\bf z}=(z_1, \ldots, z_r)\in\{{\bf z}\in\mathbb{C}^r_{\infty}\ |\ |z_1/\theta^{\frac{qs_1}{q-1}}|^{q^{i_1}}_{\infty}\cdots|z_r/\theta^{\frac{qs_r}{q-1}}|^{q^{i_r}}_{\infty}\rightarrow 0\ \text{as}\ 0\leq i_r<\cdots<i_1\rightarrow \infty \}$,
\[
	Li_{C, \mathfrak{s}}({\bf z}):=\sum_{i_1>\cdots>i_r\geq0}\frac{z_1^{q^{i_1}}\cdots z_r^{q^{i_r}}}{L_{i_1}^{s_1}\cdots L_{i_r}^{s_r}}\in\mathbb{C}_{\infty}.
\]
For each index $\mathfrak{s}\in\mathbb{Z}_{>0}^r\ (r>0)$, we set the weight as $\wt(\mathfrak{s}):=s_1+\cdots+s_r$ and the depth as $\dep(\mathfrak{s}):=r$. Analogous to the classical case, the Carlitz multiple polylogarithms $Li_{C, \mathfrak{s}}({\bf z})$ include the Carlitz polylogarithms introduced in \cite{AT90} and the Carlitz logarithms introduced in \cite{Ca35} as $\dep(\mathfrak{s})=1$ cases and $\dep(\mathfrak{s})=\wt(\mathfrak{s})=1$ cases respectively. The ABP criterion is applied to the linear independence of the Carlitz multiple polylogarithms at algebraic points with different weights in \cite{C14}, and the criterion for Eulerian Carlitz multiple polylogarithms at algebraic points in \cite{CPY19}.

Our motivation in this paper is to develop a period interpretation for the special values of $_{s+1}F_s(z)$ with some $q^d$-th powers and to provide linear independence results among these values by using Chang's refined version of ABP criterion (see Theorem \ref{changcri}). Moreover, as by-products, we present the linear/algebraic independence results for the special values of Kochubei multiple polylogarithms (KMPLs).
\subsection{Main results}
In the characteristic 0 case, it is known that the HGFs can be given by periods of algebraic varieties in some cases, particularly with rational parameters. 
For example, the special values of $\pi_2F_1(1/2, 1/2; 1)(z)$ and $\pi\sqrt{-1} _2F_1(1/2, 1/2; 1)(1-z)$ are known to be periods related to the elliptic curve $y^2=x(x-1)(x-z)$ (cf. \cite[\S 2.2]{KZ01}).  
For the $_{s+1}F_s$ case, see the survey of hypergeometric motives \cite{RRV}. There is also the study about the transcendence for values of HGFs. For example, Schwarz determined the list of HGFs of $_2F_1$ cases, which are algebraic functions in \cite{S} later generalized to the $_{s+1}F_s$ case by Beukers and Heckman (\cite{BH88}). 
Furthermore, there are studies about the linear independence for the special values of HGFs. For the recent works, Fischler and Rivoal (\cite{FR20}) and David, Hirata--Kohno, Kawasaki (\cite{DHKK}) proved the linear independence among the values of HGFs with some different algebraic points or some different rational parameters.

Our main results include a period interpretation of the special values of the THGFs (Theorem \ref{periodhgs}), the transcendence and linear independence results of some special values of the THGFs (Theorems \ref{transthm}, \ref{lindepdifferentn} and \ref{lindepdiffalpha}) and the linear/algebraic independence results of the KMPLs at algebraic points (Theorems \ref{lininddkmpl}--\ref{linindkplcpl}). We note that our Theorem \ref{transthm} also gives equivalent conditions for the special values of $_{s+1}F_s(z)$ to be transcendental over $k$, which is the same as the result in \cite{TWYZ} but the proof is different. Indeed we use a period interpretation for the special values of the THGFs.   

We can show that the $q$-th power of the THGFs at algebraic points appears as periods of a specific pre-$t$-motive $M_{{\bf a. b}, d}$ defined by \eqref{hgfmotive}.
We define 
\[
	\Omega:=(-\theta)^{\frac{-q}{q-1}}\prod^{\infty}_{i=1}\Bigl(1-\frac{t}{\theta^{q^i}}\Bigr)\in k_{\infty}(\theta^{\frac{1}{q-1}})\llbracket t \rrbracket
\]
where we fix a $(q-1)$-th root of $-\theta$. The Carlitz period $\tilde{\pi}$ is defined by $(\Omega|_{t=\theta})^{-1}$.

Let
$d\geq\max\{b_1, b_2, \ldots, b_s\}$ for given $b_1, b_2, \ldots, b_s\in\mathbb{Z}_{>0}$. We further define the following power series:
\begin{align}\label{qdef}
	P_{{\bf b},d}&:=(-\theta)^{\frac{-\sum_{j=1}^s(b_j-1)q^{d-1}}{q-1}}\prod_{l=1}^{\infty}\prod_{\substack{j=1\\ b_j\geq 2}}^s
	\Biggl\{\biggl(1-\frac{t}{\theta^{q^l}} \biggr)^{q^{b_j-2}}\biggl(1-\frac{t}{\theta^{q^{l+1}}} \biggr)^{q^{b_j-3}}\cdots\biggl(1-\frac{t}{\theta^{q^{l+b_j-2}}} \biggr) \Biggr\}^{q^{d-b_j}}
	\end{align}
	\begin{align*}
	&\hspace{-6cm}=\prod_{j=1}^s\frac{1}{\mathbb{D}_{b_j-2}^{q^{d-b_j}}}\prod^{b_j}_{i=2}\Omega^{q^{d-i}}\in k_{\infty}(\theta^{\frac{1}{q-1}})\llbracket t \rrbracket. 
\end{align*}
\begin{rem}
When all $b_i\ (i=1, 2, \ldots, s)$ and $d$ are equal to 2, we obtain $P_{d, {\bf b}}=\Omega^s$.
\end{rem}
Then, our period interpretation for the special values of the THGFs is stated as follows:
\begin{thm}\label{introperiodint}
Let $a_i, b_j\in\mathbb{Z}_{>0}\quad (1\leq i\leq s+1,\ 1\leq j\leq s)$.
Then, for $\alpha\in\overline{k}$ with $|\alpha|_{\infty}<q^{\sum_{j=1}^s(b_j-1)-\sum_{i=1}^{s+1}(a_i-1)}$ and $d\geq\max_{i, j}\{a_i, b_j\}$, the value $_{s+1}F_s(a_1, \ldots, a_{s+1}; b_1, \ldots, b_s)(\alpha)^{q^{d-1}}$ multiplied by $\prod_{j=1}^sD_{b_j-2}^{q^{d-b_j}}\prod_{i=2}^{b_j}\tilde{\pi}^{q^{d-i}}$ is a period of the pre-$t$-motive $M_{{\bf a}, {\bf b}}$.
\end{thm}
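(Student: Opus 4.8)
The plan is to realize the normalized value as the specialization at $t=\theta$ of a distinguished entry of a rigid analytic trivialization $\Psi$ of the pre-$t$-motive $M_{\mathbf{a},\mathbf{b}}$ defined by \eqref{hgfmotive}, and then invoke the definition of periods (Definition \ref{defperiod}). First I would construct a $t$-deformation of the hypergeometric tail: replace each $\theta$ coming from the difference factors $\theta^{q^j}-\theta$ inside $D_n$ and inside the Pochhammer symbols by the variable $t$, producing a power series $\mathfrak{f}(t)$ in the Tate algebra $\mathbb{T}$ whose coefficients are rational in $t$ and $\theta$ and which, after specialization $t\mapsto\theta$, recovers ${}_{s+1}F_s(\alpha)^{q^{d-1}}$. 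Passing to the $q^{d-1}$-th power is precisely what makes this deformation land in a polynomial (rather than $q$-divisible-root) ring: since $(a_i)_n=D_{n+a_i-1}^{q^{-(a_i-1)}}$ carries the exponent $q^{-(a_i-1)}$, raising the whole series to the $q^{d-1}$ power with $d\ge\max_{i,j}\{a_i,b_j\}$ turns every such exponent into the non-negative $q$-power $q^{d-a_i}$, i.e.\ into honest Frobenius twists.

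The computational heart is then to assemble $\Psi$ from $\mathfrak{f}$, the factor $P_{\mathbf{b},d}$, and $1$ in a block lower-triangular shape matching the Frobenius matrix $\Phi$ of $M_{\mathbf{a},\mathbf{b}}$, and to verify the functional equation $\Psi^{(-1)}=\Phi\Psi$ entrywise. The two inputs here are the defining relation $\Omega^{(-1)}=(t-\theta)\Omega$, which controls the $P_{\mathbf{b},d}$-block (recall $P_{\mathbf{b},d}=\prod_{j=1}^s\mathbb{D}_{b_j-2}^{-q^{d-b_j}}\prod_{i=2}^{b_j}\Omega^{q^{d-i}}$), and the index-shift recurrence $n\mapsto n-1$ satisfied by the coefficients $(a_1)_n\cdots(a_{s+1})_n/\bigl((b_1)_n\cdots(b_s)_nD_n\bigr)$, which expresses how the Pochhammer and $D_n$ factors transform under the twist and makes the difference equation close up into a finite-rank system.

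Next I would check that $\Psi$ is a well-defined, invertible matrix over the fraction field of $\mathbb{T}$: the hypothesis $|\alpha|_{\infty}<q^{\sum_{j=1}^s(b_j-1)-\sum_{i=1}^{s+1}(a_i-1)}$ is exactly the $r=s+1$ convergence condition, guaranteeing that $\mathfrak{f}$ converges and defines an element of $\mathbb{T}$, while $\Omega$ (hence $P_{\mathbf{b},d}$) is entire and nonvanishing at $t=\theta$. Specializing at $t=\theta$ and using $\Omega|_{t=\theta}=\tilde{\pi}^{-1}$ together with $\mathbb{D}_{b_j-2}|_{t=\theta}=D_{b_j-2}$, the entry in question becomes precisely ${}_{s+1}F_s(\alpha)^{q^{d-1}}\cdot\prod_{j=1}^sD_{b_j-2}^{q^{d-b_j}}\prod_{i=2}^{b_j}\tilde{\pi}^{q^{d-i}}$; by Definition \ref{defperiod} the entries of $\Psi(\theta)$ are periods of $M_{\mathbf{a},\mathbf{b}}$, which yields the claim.

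The main obstacle lies in Steps one and two, namely pinning down the interpolating series $\mathfrak{f}$ and the matrix $\Phi$ so that the twisted difference equation is satisfied and truncates to a finite system, and so that the $q^{d-b_j}$ and $q^{d-i}$ exponents appearing in $P_{\mathbf{b},d}$ and in the normalizing factor match exactly those forced by the $q^{d-1}$ power and by the recurrence among the $(b_j)_n$ and $D_n$. Getting these exponents to agree, and verifying that the resulting $\Phi$ is indeed the one defining $M_{\mathbf{a},\mathbf{b}}$, is where essentially all of the work resides; by contrast the convergence and specialization steps are routine once the bookkeeping is correct.
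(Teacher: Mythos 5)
Your proposal follows essentially the same route as the paper's proof of Theorem \ref{periodhgs}: deform the Pochhammer--Thakur symbols by replacing $\theta^{q^j}-\theta$ with $\theta^{q^j}-t$ to get $_{s+1}\mathcal{F}_s\in\mathbb{T}\setminus\{0\}$, derive the Frobenius difference equation from $\mathbb{D}_i^{(-1)}=(\theta-t)^{q^{i-1}}\mathbb{D}_{i-1}$, build the lower-triangular $\Psi$ with diagonal $P_{\mathbf{b},d}$ satisfying $\Psi^{(-1)}=\Phi_{\mathbf{a},\mathbf{b},d}\Psi$, and specialize at $t=\theta$. The only slip is attributing the normalized value to an entry of $\Psi(\theta)$ rather than of $\Psi^{-1}|_{t=\theta}$ (which is what Definition \ref{defperiod} calls a period, and which is where the factor $\prod_{j=1}^sD_{b_j-2}^{q^{d-b_j}}\prod_{i=2}^{b_j}\tilde{\pi}^{q^{d-i}}$ appears with positive exponents, since $P_{\mathbf{b},d}$ itself carries $\mathbb{D}_{b_j-2}^{-q^{d-b_j}}$ and $\Omega^{q^{d-i}}$); this is a trivial inversion for the given triangular shape.
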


\begin{rem}
We also give a period interpretation of the special values of the THGFs without $\prod_{j=1}^sD_{b_j-2}^{q^{d-b_j}}\prod_{i=2}^{b_j}\tilde{\pi}^{q^{d-i}}$ in Remark \ref{simperintthgf}.
\end{rem}

According to our period interpretation and the refined ABP criterion, we obtain the following linear independence results, each of which is restated later in Theorem \ref{transthm}, \ref{lindepdifferentn} and \ref{lindepdiffalpha}. 

\begin{thm}\label{introlinindthgf}
We denote all $m$ satisfying $d\geq m\geq 0$ by $m_i\ (i=1, \ldots, n)$ where $d=\max_{\substack{1\leq i\leq s+1\\ 1\leq j\leq s}}\{a_i, b_j\}$.
\begin{itemize}
\item	We set ${\bf a}=(a_{1}, \ldots, a_{s+1})\in\mathbb{Z}_{>0}^{s+1}$, ${\bf b}=(b_{1}, \ldots, b_{s})\in\mathbb{Z}_{>0}^{s}$ and $\alpha\in\overline{k}^{\times}$ with $|\alpha|_{\infty}<q^{\sum_{j=1}^s(b_j-1)-\sum_{i=1}^{s+1}(a_i-1)}$. Then, $_{s+1}F_s(a_1, \ldots, a_{s+1};b_1, \ldots, b_s)(\alpha)$ is transcendental over $k$ if and only if $b_j>a_{j+1}$ for some $j$.
\item	For any ${\bf a}_s=(a_{1}, \ldots, a_{s+1})\in\mathbb{Z}_{>0}^{s+1}$, ${\bf b}_s=(b_{1}, \ldots, b_{s})\in\mathbb{Z}_{>0}^{s}$ such that $b_1>a_{s+1}$, we take $m_r$ satisfying $b_1-1\geq m_r$. Let ${\bf a}_h=(a_{1}, \ldots, a_{h+1})$, ${\bf b}_h=(b_{1}, \ldots, b_{h})\ (h=1, \ldots, s)$ and $\alpha_h\in\overline{k}^{\times}$ with $|\alpha_h|_{\infty}<q^{\sum_{j=1}^h(b_j-1)-\sum_{i=1}^{h+1}(a_i-1)}$. If $\min_{1\leq i\leq n, i\neq r}\{c(m_i)q^{d-m_i}\}>c(m_r)q^{d-m_r}$, then $_{h+1}F_{h}({\bf a}_h; {\bf b}_h)(\alpha_h)\ (1\leq h\leq s)$ are $\overline{k}$-linearly independent.
\item  For any ${\bf a}=(a_{1}, \ldots a_{s+1})\in\mathbb{Z}_{>0}^{s+1}$ and ${\bf b}=(b_{1}, \ldots b_{s})\in\mathbb{Z}_{>0}^{s}$ such that $b_j>a_{j+1}$ for some $j$ and $\min_{1\leq i\leq n, i\neq u}\{c(m_i)q^{d-m_i}\}>c(m_u)q^{d-m_u}$ for some $u$, let $\alpha_i\in\overline{k}^{\times}\ (i=1,\ldots, r)$ with $|\alpha_i|_{\infty}<q^{\sum_{j=1}^s(b_j-1)-\sum_{i=1}^{s+1}(a_i-1)}$.
If $\alpha_1, \ldots, \alpha_r$ are $k$-linearly independent, then $_{s+1}F_{s}({\bf a}; {\bf b})(\alpha_1), \ldots, _{s+1}F_{s}({\bf a}; {\bf b})(\alpha_r)$ are $\overline{k}$-linearly independent.
\end{itemize}
\end{thm}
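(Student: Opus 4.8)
The plan is to deduce all three statements from the period interpretation of Theorem~\ref{introperiodint} (restated and proved as Theorem~\ref{periodhgs}) together with Chang's refined ABP criterion (Theorem~\ref{changcri}). The starting point is that, after multiplying by the explicit factor $\prod_{j=1}^sD_{b_j-2}^{q^{d-b_j}}\prod_{i=2}^{b_j}\tilde{\pi}^{q^{d-i}}$, the value $_{s+1}F_s({\bf a};{\bf b})(\alpha)^{q^{d-1}}$ is realized as a period of the pre-$t$-motive $M_{{\bf a},{\bf b}}$ of \eqref{hgfmotive}, i.e.\ an entry of the specialization $\Psi(\theta)$ of a rigid analytic trivialization $\Psi^{(-1)}=\Phi_{{\bf a},{\bf b}}\Psi$. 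I would first record $\Phi_{{\bf a},{\bf b}}$ and $\Psi$ explicitly, verify that $\det\Phi_{{\bf a},{\bf b}}$ is a power of $(t-\theta)$ times a unit and that the entries lie in the relevant Tate algebra, so that the hypotheses of Theorem~\ref{changcri} are satisfied.

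For the transcendence dichotomy (first bullet, Theorem~\ref{transthm}), I would argue by the two directions. When $b_j\leq a_{j+1}$ for every $j$, the surviving $\Omega^{q^{d-i}}$ factors contributed to $P_{{\bf b},d}$ by the denominator Thakur--Pochhammer symbols are cancelled by the corresponding numerator symbols, so the normalized value collapses to an algebraic multiple of a product of powers of $\tilde{\pi}$; dividing out the explicit $\tilde{\pi}$-factor then exhibits $_{s+1}F_s({\bf a};{\bf b})(\alpha)$ itself as algebraic. Conversely, when $b_j>a_{j+1}$ for some $j$, a net transcendental $\Omega$-contribution survives, and I would show via Theorem~\ref{changcri} that the associated extension does not split: any algebraic relation between the period and $1$ must come from a $\overline{k}[t]$-linear relation among the rows of $\Psi$ specializing at $t=\theta$, and the genuine $\Omega$-factor obstructs every such relation. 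This yields the ``only if'' direction.

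For the two linear independence statements (second and third bullets, Theorems~\ref{lindepdifferentn} and~\ref{lindepdiffalpha}), I would form the direct sum of the relevant pre-$t$-motives---over the levels $h=1,\ldots,s$ in one case, over the points $\alpha_1,\ldots,\alpha_r$ in the other---and apply Theorem~\ref{changcri} to the assembled period matrix. A nontrivial $\overline{k}$-linear relation among the listed values would force a $\overline{k}[t]$-linear relation among the rows of the combined $\Psi$, hence an identity among the block $\Phi$-data. The numerical hypotheses $\min_{1\leq i\leq n,\,i\neq r}\{c(m_i)q^{d-m_i}\}>c(m_r)q^{d-m_r}$ (respectively with index $u$) are exactly what isolate the level-$m_r$ (respectively level-$m_u$) graded component: they force the associated $\Omega$-exponent, equivalently the $\infty$-adic order of the corresponding entry, to be strictly separated from all the others, so the putative relation cannot couple that component to the rest and must be trivial. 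In the third bullet, the additional hypothesis that $\alpha_1,\ldots,\alpha_r$ are $k$-linearly independent feeds into the ABP relation to rule out relations arising purely from the distinct evaluation points.

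The hard part will be the bookkeeping in part~1 together with the verification that the strict inequality $\min\{c(m_i)q^{d-m_i}\}>c(m_r)q^{d-m_r}$ supplies precisely the separation of graded pieces required to run the refined criterion. Concretely, one must compute the $\overline{k}(t)$-span of the entries of $\Psi$ and match it against the $\overline{k}$-span of the periods; the delicate step is that Theorem~\ref{changcri} reduces $\overline{k}$-linear independence of the periods to a statement about the rows of $\Psi$, and one has to exclude every degenerate $\overline{k}[t]$-combination, which is exactly where the strict comparison of the weights $c(m_i)q^{d-m_i}$ does the decisive work.
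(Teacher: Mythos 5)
Your outline follows the paper's overall architecture (period interpretation via Theorem \ref{periodhgs}, direct sums of the pre-$t$-motives, Chang's refined criterion), but it stops exactly where the real work begins, and two of your mechanisms are wrong. First, you propose to ``verify that $\det\Phi_{{\bf a},{\bf b}}$ is a power of $(t-\theta)$ times a unit'': it is not, in general, and that is precisely why the paper must use Chang's refinement (whose hypotheses are $\det\Phi(0)\neq 0$ and $\det\Phi(\xi^{(-i)})\neq 0$) instead of the original ABP criterion. Second, your account of the algebraicity direction of the first bullet --- numerator Pochhammer symbols ``cancelling'' the $\Omega^{q^{d-i}}$ factors in $P_{{\bf b},d}$ so the value collapses to an algebraic multiple of powers of $\tilde{\pi}$ --- is not how it works and gives no actual proof of algebraicity. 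The paper's mechanism is that when $b_j\leq a_{j+1}$ for all $j$ one has $c(l)\geq 0$ for every $l$, so the $(\theta^{q^i}-t)$-expansion \eqref{expansionhgf} of ${}_{s+1}\mathcal{F}_s(\alpha)^{q^d}$ becomes a finite $\mathbb{F}_q[t]$-linear combination of sums $\sum_{n\geq 0}\theta^{hq^{n+d}}\alpha^{q^{n+d}}$, each of which satisfies an Artin--Schreier-type equation over $\overline{k}$ and hence lies in $\overline{k}$; the deformed series is then a polynomial in $\overline{k}[t]$ and Theorem \ref{changcri}(2) transfers transcendence degree $0$ to $t=\theta$.

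More fundamentally, Chang's criterion only lifts a putative $\overline{k}$-relation at $t=\theta$ to a $\overline{k}[t]$-linear relation among the deformation series; deriving a contradiction from that functional relation is the core of all three proofs, and your proposal does not say how. The paper's engine is Proposition \ref{deformthgfexpn} (the $(\theta^{q^i}-t)$-expansion with exponents $c(m-n)q^{n+d-m}$) combined with specialization at $t=\theta^{q^N}$ for a suitable large $N$: when some $c(l)<0$, exactly the terms with $n=N-l$ acquire poles at $t=\theta^{q^N}$ while the rest of the relation stays regular, forcing $g(\theta^{q^N})=0$ or an algebraic relation on $\alpha$. The hypothesis $\min_{i\neq r}\{c(m_i)q^{d-m_i}\}>c(m_r)q^{d-m_r}$ is not about separating ``$\Omega$-exponents'' or graded components; as Remark \ref{cassum} explains, it guarantees that after clearing the maximal pole order $c(m_r)q^{d-m_r}$ and setting $t=\theta^{q^N}$, a \emph{single} monomial $\alpha^{q^{N-m_r+d}}$ survives rather than a $k$-linear combination of several distinct $q$-powers of $\alpha$, which is what makes the contradiction ($\alpha\neq 0$, resp.\ $k$-linear independence of $\alpha_1,\dots,\alpha_r$) available. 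For the third bullet the paper additionally needs an iterated Frobenius-difference reduction $h_{i,j+1}=h_{i,j}/h_{r-j,j}-(h_{i,j}/h_{r-j,j})^{(-1)}$ to drive the coefficients into $\mathbb{F}_q(t)$ before the $k$-linear independence of the $\alpha_i$ can be invoked; nothing in your sketch supplies this step. As written, the proposal is an accurate table of contents for the proof but not a proof.
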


For the definition of $c(\cdot)$ and the necessity of its conditions in Theorem \ref{introlinindthgf}, see Definition \eqref{deformthgfexpn} and Remark \ref{cassum}. 
By Proposition \ref{genthgfkpl} and \ref{hgslog}, the above results can be applied to the case of Kochubei polylogarithms (KPLs), which were defined and studied by Kochubei (\cite{Ko05}) as follows:
\[
	Li_{K,s}(z):=\sum_{i\geq 1}\frac{z^{q^i}}{(\theta^{q^i}-\theta)^s}\in\mathbb{C}_{\infty}
\]
for $z\in\mathbb{C}_{\infty}$ with $|z|_{\infty}<q^{s}$. The case of $s=z=1$ was discussed by Wade \cite{Wa41}, who found that $\sum_{i\geq 1}1/(\theta^{q^i}-\theta)$ is transcendental over $k$. The KPLs are considered to be another positive characteristic analogue of the polylogarithms, with a different motivation from that of the 
Carlitz polylogarithms. On the one hand, the Carlitz polylogarithms are generalizations of Carlitz logarithms defined by the formal inverse of the Carlitz exponentials; on the other hand, Kochubei's idea was to obtain the analogue of the classical polylogarithm $Li_s(z):=\sum_{n>0}z^n/n^s$ by finding a function that satisfies the analogue of the differential equation $zd/dzLi_s(z)=Li_{s-1}(z)$. Notably, our definition of the KPLs is given in the $\infty$-adic case, while Kochubei (\cite{Ko05}) defined them in the $v$-adic case ($v$ is a monic irreducible polynomial in $\mathbb{F}_q[\theta]$, a finite place). In \cite{Ko05}, he also defined the analogues of the Riemann zeta values by $\zeta_K(\theta^{-n}):=Li_{K, n}(1)$ which we call the Kochubei zeta values in this paper. 

Based on the following setting, we can define the KMPLs as $\mathfrak{s}:=(s_1, \ldots, s_r)\in\mathbb{Z}_{>0}^r$ and ${\bf z}:=(z_1, \ldots, z_r)\in\mathbb{C}_{\infty}^r$ such that $|z_i|_{\infty}<q^{s_i}$:
\[
	Li_{K, \mathfrak{s}}({\bf z}):=\sum_{i_1>\cdots>i_r>0}\frac{z_1^{i_1}\cdots z_r^{i_r}}{(\theta^{q^{i_1}}-\theta)^{s_1}\cdots(\theta^{q^{i_r}}-\theta)^{s_r}}\in\mathbb{C}_{\infty}.
\]
In this paper, we denote the 1-variable case by $Li_{K, \mathfrak{s}}(z)=\sum_{i_1>\cdots>i_r>0}\frac{z^{i_1}}{(\theta^{q^{i_1}}-\theta)^{s_1}\cdots(\theta^{q^{i_r}}-\theta)^{s_r}}$. 
Similar to the cases of classical multiple polylogarithm (cf.\cite{W02}) and the Carlitz multiple polylogarithm (\cite[\S 5.2]{C14}), the KMPLs also satisfy the sum-shuffle relation by their series expressions. We can describe the relation in the same way of the Carlitz case (\cite[Section 5.2]{C14}). For a given $\mathfrak{s}_1\in\mathbb{Z}^{r_1}_{>0}$ and $\mathfrak{s}_2\in\mathbb{Z}^{r_2}_{>0}$ it is described by
\begin{align}\label{shufflekmpl}
	Li_{K, \mathfrak{s}_1}({\bf z_1})Li_{K, \mathfrak{s}_2}({{\bf z}_2})=\sum_{({{\bf v}_1, {\bf v}_2})}Li_{K, {{\bf v}_1+{\bf v}_2}}({{\bf z}_3}).
\end{align}
Here, ${{\bf v}_1, {\bf v}_2}\in\mathbb{Z}_{\geq 0}^{r_3}$ satisfying ${{\bf v}_1}+{{\bf v}_2}\in\mathbb{Z}_{>0}^{r_3}$ with $\max\{r_1, r_2\}\leq r_3\leq r_1+r_2$ and ${{\bf v}_i}\ (i=1 ,2)$ is obtained by inserting $(r_3-r_i)$ zeros into $\mathfrak{s}_i$ in all possible ways, including in front and the end of $\mathfrak{s}_i$. The pair $({{\bf v}_1, {\bf v}_2})$ runs over all such expressions for all $r_3$ with $\max\{r_1, r_2\}\leq r_3\leq r_1+r_2$. For every such ${{\bf v}_1}+{{\bf v}_2}\in\mathbb{Z}_{>0}^{r_3}$, the $m$th component $z_{3m}$ of ${{\bf z}_3}$ is $z_{in}$ if the $m$th component of ${{\bf v}_i}$ is $s_{in}$, while the $m$th component of ${{\bf v}_j}\ (i\neq j)$ is 0 or $z_{3m}$ is $z_{1n}z_{2l}$ if the $m$th component of both ${{\bf v}_1}$ and ${{\bf v}_2}$ are $z_{1n}$ and $z_{2l}$. 
For example, we have $$Li_{K, s_1}(z_1)Li_{K, s_2}(z_2)=Li_{K, (s_1+s_2)}(z_1z_2)+Li_{K, (s_1, s_2)}(z_1, z_2)+Li_{K, (s_2, s_1)}(z_2, z_1).$$ 
This relation allows us to rewrite the algebraic relations to linear relations among the KMPLs. We can also give a period interpretation of the special values of the KMPLs with a pre-$t$-motive defined by $\Phi_{\mathfrak{s}, {\boldsymbol \alpha}}$ (see \eqref{periodkmpl}), and by applying the ABP criterion, we can obtain linear independence results and an algebraic independence result via the sum-shuffle relation. 

Our linear/algebraic independence results for the special values of KMPLs are stated as follows (each of them is described again later in Theorems \ref{lininddkmpl}-\ref{nonzetalike}):
\begin{thm}\label{intthmkmpl}
\begin{itemize}
\item[(i)] For indices $\mathfrak{s}\in\mathbb{Z}_{>0}^2$ with $\wt(\mathfrak{s})=w$ and $\alpha\in\overline{k}^{\times}$ with $|\alpha|_{\infty}<q^{s_1}$, $Li_{K, \mathfrak{s}}(\alpha)$ are $\overline{k}$-linearly independent.
\item[(ii)] For $s_2\geq s_1>0$, let $\alpha_1, \alpha_2, \alpha_3\in\overline{k}^{\times}$ with $|\alpha_1|_{\infty}, |\alpha_3|_{\infty}<q^{s_1}$ and $|\alpha_2|_{\infty}<q^{s_2}$. Then $Li_{K, (s_1, s_2)}(\alpha_1, \alpha_2)$ and $Li_{K, s_2}(\beta_1)$ are algebraically independent over $\overline{k}$.
\item[(iii)] Let $\alpha, \beta\in\overline{k}^{\times}$ such that $|\alpha|_{\infty}<q^n$, $|\beta|_{\infty}\in q^{nq/(q-1)}$. Then, $Li_{K,n}(\alpha), Li_{C,n}(\beta)$ are $\overline{k}$-linearly independent.
\item[(iv)] Let $\mathfrak{s}=(s_{1}, \ldots, s_{r})\in\mathbb{Z}_{>0}^r$ with $\wt(\mathfrak{s})=w$ and ${\boldsymbol \alpha}=(\alpha_1, \ldots, \alpha_r)\in(\overline{k}^{\times})^r$ with $|\alpha_i|_{\infty}<q^{s_i}\ (i=1, \ldots, r)$, such that $Li_{K,\mathfrak{s}}({\boldsymbol \alpha})\neq 0$. Then $Li_{K,\mathfrak{s}}({\boldsymbol \alpha})$ and $\tilde{\pi}^w$ are $\overline{k}$-linearly independent.
\item[(v)] For $\mathfrak{s}=(s_{1}, \ldots, s_{r})\in\mathbb{Z}_{>0}^r$ such that $\wt(\mathfrak{s})=w$, let ${\boldsymbol \alpha}=(\alpha_1, \ldots, \alpha_r)\in(\overline{k}^{\times})^r$ with $|\alpha_i|_{\infty}<q^{s_i}\ (i=1, \ldots, r)$ and $\beta\in\overline{k}^{\times}$ with $|\beta|_{\infty}<q^w$. Then, $Li_{K, \mathfrak{s}}({\boldsymbol \alpha})$ and $Li_{K, w}(\beta)$ are $\overline{k}$-linearly independent.
\end{itemize}
\end{thm}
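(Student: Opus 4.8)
The plan is to derive all five assertions from the period interpretation of the Kochubei multiple polylogarithm values recalled via $\Phi_{\mathfrak{s},\boldsymbol\alpha}$ in \eqref{periodkmpl}, combined with the refined ABP criterion of Chang (Theorem \ref{changcri}). For a fixed index $\mathfrak{s}$ of weight $w$ and a tuple $\boldsymbol\alpha$, I would first record the rigid analytic trivialization $\Psi_{\mathfrak{s},\boldsymbol\alpha}$ of the associated pre-$t$-motive, whose specialization at $t=\theta$ recovers $Li_{K,\mathfrak{s}}(\boldsymbol\alpha)$ up to an explicit power of $\tilde\pi$; the relevant entries are entire series with controlled poles at the points $t=\theta^{q^{i}}$. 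The governing principle for the linear statements (i), (iii), (iv) and (v) is that any nontrivial $\overline{k}$-linear relation among a finite collection of such specializations descends, by Theorem \ref{changcri}, to a $\overline{k}[t]$-linear relation among the corresponding rows of the trivialization matrices, and the work is to show that such a functional relation cannot exist unless the original coefficients all vanish.

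For (v) I would assemble the depth-$r$ motive for $Li_{K,\mathfrak{s}}(\boldsymbol\alpha)$ and the depth-one motive for $Li_{K,w}(\beta)$ into a single block-triangular $\Phi$, both blocks being built over the $w$-th tensor power of the Carlitz motive, so that $\tilde\pi^{w}$ is the common diagonal period. Assuming a relation $a\,Li_{K,\mathfrak{s}}(\boldsymbol\alpha)+b\,Li_{K,w}(\beta)+c=0$ with $a,b,c\in\overline{k}$ and $(a,b)\neq 0$, I would descend to a functional relation and propagate it through the difference equation $\Psi^{(-1)}=\Phi\Psi$, comparing the orders of vanishing at the successive points $t=\theta,\theta^{q},\ldots$. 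The nested, genuinely depth-$r$ structure of the interpolating series for $\mathfrak{s}$ cannot be matched by the single depth-one series, which forces $a=0$; the remaining relation $b\,Li_{K,w}(\beta)+c=0$ is then excluded by the transcendence of the depth-one value (the $r=1$ case of (i)), giving $b=c=0$. Statements (i) and (iv) follow the same template, organized by weight: values of distinct weight sit in different Carlitz tensor powers and separate automatically, so the analysis reduces to a fixed weight $w$. There the motives attached to distinct compositions of $w$ are distinguished by the internal pole orders of their trivializations (this yields (i)), while (iv) amounts to the non-splitting of the extension of the trivial motive by the $w$-th Carlitz tensor power realized by $Li_{K,\mathfrak{s}}(\boldsymbol\alpha)$, which holds precisely because of the nonvanishing hypothesis $Li_{K,\mathfrak{s}}(\boldsymbol\alpha)\neq 0$. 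For (iii) the two competing motives lie over the same Carlitz tensor power but have genuinely different interpolating series, the Kochubei denominators $(\theta^{q^{i}}-\theta)^{n}$ versus the Carlitz denominators $L_{i}^{n}$, and a pole-order comparison at $t=\theta^{q^{i}}$ again separates them.

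The algebraic independence in (ii) I would reduce to linear independence by means of the sum-shuffle relation \eqref{shufflekmpl}. If $Li_{K,(s_1,s_2)}(\alpha_1,\alpha_2)$ and $Li_{K,s_2}(\beta_1)$ were algebraically dependent over $\overline{k}$, then any polynomial relation expands into a $\overline{k}$-linear combination of monomials in these two values; each such monomial is, by \eqref{shufflekmpl}, a $\overline{k}$-linear combination of Kochubei multiple polylogarithm values of explicitly controlled weight and depth. Collecting terms of equal weight and invoking the weight-graded linear independence established above (a suitable extension of (i) and (v)) forces every coefficient to vanish, contradicting the supposed dependence.

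The main obstacle in all cases is the depth-separation step inside the descended functional relation, that is, showing that the row coming from a genuine depth-$r$ polylogarithm is not a $\overline{k}[t]$-combination of the depth-one and constant rows; equivalently, pinning down the relevant portion of the motivic Galois group. For (ii) the additional difficulty is bookkeeping: one must verify that all the higher-depth values produced by expanding the shuffle products remain within the range to which the proven linear independence applies, so that the reduction to linear independence is faithful.
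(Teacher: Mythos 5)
Your overall framework --- the period interpretation via $\Phi_{\mathfrak{s},\boldsymbol\alpha}$, descent of a putative $\overline{k}$-linear relation to a $\overline{k}[t]$-functional relation by Theorem \ref{changcri}, and then a pole-order analysis at $t=\theta^{q^N}$ --- is exactly the paper's strategy for (i) and (v), and your description of those two cases is essentially correct. But there are genuine gaps elsewhere. The most serious is in (ii). After applying the sum-shuffle relation \eqref{shufflekmpl}, the monomials $X^iY^j$ expand into KMPLs of depth up to $2i+j$, i.e.\ of arbitrarily large depth. You then propose to conclude by ``invoking the weight-graded linear independence established above (a suitable extension of (i) and (v))''; no such result is available. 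Part (i) covers only depth $2$, part (v) only a specific pair, and the $\overline{k}$-linear independence of \emph{all} KMPLs of a fixed weight across all depths is a much stronger statement that is neither proved in the paper nor anywhere else you can cite --- indeed it would subsume most of the theorem. The paper avoids this entirely: it applies Theorem \ref{changcri} to the whole shuffle-expanded collection at once, multiplies the functional relation by $(\theta^{q^N}-t)^{ls_2}$ with $l=\max\{i+j: f_{ij}\neq 0\}$, and observes that only the two depth-$2$ indices $(is_1+js_2,is_2)$ and $(is_1,is_2+js_2)$ with $i+j=l$ carry a pole of full order $ls_2$ at $t=\theta^{q^N}$ (higher-depth indices spread the weight over more slots, so each slot has strictly smaller exponent). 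Evaluating at $t=\theta^{q^N}$ then leaves a $\overline{k}$-linear relation among $1$ and depth-one KPLs of \emph{distinct} weights, which is killed by Remark \ref{kplgon} (a consequence of Theorem \ref{lindepdifferentn}). So the contradiction ultimately rests on the depth-one result, not on any weight-graded independence of higher-depth values; your reduction as stated cannot be completed.

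Two smaller but real problems. For (iv), your justification that the non-splitting of the extension ``holds precisely because of the nonvanishing hypothesis $Li_{K,\mathfrak{s}}(\boldsymbol\alpha)\neq 0$'' is circular: non-splitting is essentially equivalent to the conclusion ($Li_{K,\mathfrak{s}}(\boldsymbol\alpha)\notin\overline{k}\,\tilde\pi^{w}$), and a nonzero period could a priori lie in $\overline{k}\,\tilde\pi^{w}$. The nonvanishing hypothesis only disposes of the degenerate case where the coefficient of $\tilde\pi^{w}$ vanishes; the substantive case still needs the argument the paper gives (apply the $(-1)$-twist and note that $\Omega^{w}\mathcal{Li}_{K,\mathfrak{s}}$ vanishes at $t=\theta^{q^N}$ because the pole order of $\mathcal{Li}_{K,\mathfrak{s}}$ there is strictly less than $w$, forcing $g_2^{(-1)}(\theta^{q^N})=0$). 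For (iii), a ``pole-order comparison'' does not separate the two series: both $\mathcal{Li}_{K,n}(\alpha)$ and $\mathcal{Li}_{C,n}(\beta)$ have a pole of the same order $n$ at $t=\theta^{q^N}$. What differs is the leading coefficient --- a single term (hence an algebraic quantity) for the Kochubei series versus an infinite tail summing to $Li_{C,n}(\beta)^{q^N}$ plus algebraic terms for the Carlitz series --- and the paper's contradiction comes from the known transcendence of $Li_{C,n}(\beta)$ (\cite[Theorem 5.4.3]{C14}), an external input your sketch does not identify.
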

We can define Kochubei multizeta values by $\zeta_K(\theta^{-s_1}, \ldots, \theta^{-s_r}):=Li_{K, s_1,\ldots, s_r}(1, \ldots, 1)$. According to the Thakur multizeta values case (\cite{LT14}), we can also define Eulerian/zeta-like indices for Kochubei multizeta values but Theorem \ref{intthmkmpl} (iv) and (v) imply non-existence of such indices. 

Finally, this paper is organized as follows. In Section 1, we recall fundamental notations and definition of periods together with pre-$t$-motives. We also present the relation which shows that KPLs are $q$-th power of HGFs with certain parameters. In Section 2, we consider the deformation of THGFs and KMPLs so as to obtain Theorem \ref{introperiodint} and \eqref{periodkmpl}, the period interpretation of the values of THGFs and KMPLs. In Section 3, we recall refined ABP criterion and and present $(\theta^{q^i}-t)$-expansion of the deformation of THGFs. By using them, we prove Theorem \ref{introlinindthgf}. We deal with linear independence problems among the special values of KMPLs in Section 4 and conclude with our proof for Theorem \ref{intthmkmpl}.

\section{Preliminaries}
\label{No}
\subsection{Notations}

We propose the following symbols.

\begin{itemize}
\setlength{\leftskip}{1.0cm}
\item[$q:=$] a power of the prime number $p$.
\item[$\mathbb{F}_q:=$] a finite field with $q$ elements.
\item[$\theta$, $t:=$] independent variables.
\item[$A:=$] the polynomial ring $\mathbb{F}_q[\theta]$.
\item[$A_{+}:=$] the set of monic polynomials in $A$.
\item[$k:=$] the rational function field $\mathbb{F}_q(\theta)$.
\item[$k_{\infty}:=$] the completion of $k$ at 
infinite place $\infty$, $\mathbb{F}_q((\frac{1}{\theta}))$.
\item[$\overline{k_{\infty}}:=$] a fixed algebraic closure of $k_{\infty}$.
\item[$\mathbb{C}_{\infty}:=$] the completion of $\overline{k_{\infty}}$ at infinity $\infty$.
\item[$\overline{k}:=$] a fixed algebraic closure of $k$ in $\mathbb{C}_{\infty}$.
\item[$|\cdot|_{\infty}:=$] a fixed absolute value for the completed field $\mathbb{C}_{\infty}$ such that $|\theta|_{\infty}=q$.
\item[$\mathbb{T}:=$] the Tate algebra over $\mathbb{C}_{\infty}$, which is the subring of $\mathbb{C}_{\infty}\llbracket t \rrbracket$ that consists of\\
\quad power series convergent on the closed unit disc $|t|_{\infty}\leq 1$.
\item[$\mathbb{E}:=$] $\{\sum_{i=0}^{\infty}a_it^i\in\overline{k}\llbracket t \rrbracket\mid \lim_{i\to\infty}|a_i|_\infty^{1/i}=0,~[k_\infty(a_0,a_1,\dots):k_\infty]<\infty\}.$
\item[$D_i:=$] $\prod^{i-1}_{j=1}([j])^{q^{i-j}}\in A_{+}$ where $[j]:=\theta^{q^j}-\theta$ and $D_0:=1$.
\item[$L_i:=$] $\prod^{i}_{j=1}(-[j])\in A_{+}$ and $L_0:=1$.
\end{itemize}

For $n\in\mathbb{Z}$, we define the following automorphism, which is known as the $n$-fold Frobenius twist:
 	\begin{align*}
	    \mathbb{C}_{\infty}((t))&\rightarrow\mathbb{C}_{\infty}((t))\\
	    f:=\sum_{i}a_it^i&\mapsto \sum_{i}a_i^{q^{n}}t^i=:f^{(n)}.
    \end{align*}

\begin{defn}
For $s\geq 0$, we set $z\in\mathbb{C}_{\infty}$ with $|z|_{\infty}<q^{s}$
and define the following power series:
\[
    \mathcal{Li}_{K,s}(z):=\sum_{i\geq1}\frac{z^{q^i}}{(\theta^{q^i}-t)^s}\in\mathbb{C}_{\infty}\llbracket t,z \rrbracket.
\]
This series is specialized to $Li_{K, s}(\alpha)$ with $t=\theta$ and satisfies the following Frobenius difference equation:
\begin{align}\label{defkplfrobdiff}
     \mathcal{Li}_{K,s}(z)^{(-1)}=\frac{z}{\theta-t}+\mathcal{Li}_{K,s}(z).
\end{align}
\end{defn}

We propose the following relation, which is inspired by the well-known relation for Lerch transcendents and HGFs in the classical case.
\begin{prop}\label{genthgfkpl}
For $m\in\mathbb{Z}_{>0}$ and $z\in\mathbb{C}_{\infty}$ with $|z|_{\infty}<q^{s+m-1}$, we have
\[
	\Bigl(~_{s+1}F_s(1, m, \ldots, m; 1+m, \ldots, 1+m)(z^{q^{-m+1}})\Bigr)^{q^m}=\sum_{i\geq 0}\frac{z^{q^{i+1}}}{[i+m]^{s}}.
\]
\end{prop}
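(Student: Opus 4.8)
The plan is to reduce the left-hand side to the series on the right by a direct computation with the Pochhammer symbols followed by the substitution and the $q^m$-th power. First I would insert the chosen parameters $a_1=1$, $a_2=\cdots=a_{s+1}=m$ and $b_1=\cdots=b_s=1+m$ into the definition \eqref{s0hyper} and evaluate each symbol using the rule $(a)_n=D_{n+a-1}^{q^{-(a-1)}}$ valid for $a\geq 1$. This gives $(1)_n=D_n$, $(m)_n=D_{n+m-1}^{q^{1-m}}$ and $(1+m)_n=D_{n+m}^{q^{-m}}$, so that once the factors $(1)_n=D_n$ in the numerator cancel the $D_n$ coming from the denominator of \eqref{s0hyper}, the $n$-th coefficient collapses to $\bigl(D_{n+m-1}^{q^{1-m}}/D_{n+m}^{q^{-m}}\bigr)^{s}$.

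The key simplification is then to clear this ratio using the recursion $D_{i}=[i]\,D_{i-1}^{q}$, which follows at once from $D_i=\prod_{j=1}^i[j]^{q^{i-j}}$ by splitting off the factor $j=i$. Applying the Frobenius twist $q^{-m}$ to the instance $D_{n+m}=[n+m]\,D_{n+m-1}^{q}$ yields $D_{n+m}^{q^{-m}}=[n+m]^{q^{-m}}\,D_{n+m-1}^{q^{1-m}}$, whence the ratio equals exactly $[n+m]^{-q^{-m}}$ and the coefficient is $[n+m]^{-sq^{-m}}$. Therefore $~_{s+1}F_s(1,m,\dots,m;1+m,\dots,1+m)(z)=\sum_{n\geq 0}z^{q^n}/[n+m]^{sq^{-m}}$.

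It then remains to substitute $z\mapsto z^{q^{-m+1}}$ and raise to the $q^m$-th power. Because $q^m$ is a $p$-power, the Frobenius distributes over the convergent sum, so I would argue termwise: raising the $q^{n+1-m}$-th power of $z$ to the $q^m$-th power gives its $q^{n+1}$-th power, while $([n+m]^{sq^{-m}})^{q^m}=[n+m]^{s}$ removes the $q^{-m}$-twist. Reindexing by $i=n$ recovers $\sum_{i\geq 0}z^{q^{i+1}}/[i+m]^{s}$, as claimed. Finally I would verify the domain: the convergence exponent for $~_{s+1}F_s$ here is $\sum_{j}(b_j-1)-\sum_{i}(a_i-1)=sm-s(m-1)=s$, so the argument $z^{q^{-m+1}}$ lies in the disc $|\cdot|_\infty<q^{s}$ precisely when $|z|_\infty<q^{s q^{m-1}}$; since $s+m-1\leq s q^{m-1}$ for all $m\geq 1$, the stated hypothesis $|z|_\infty<q^{s+m-1}$ guarantees this, so the formal identity is analytic on the stated region.

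The whole argument is essentially bookkeeping; the only delicate point, which I expect to be the main (and minor) obstacle, is the cancellation in the second paragraph, where one must confirm that after the $q^{-m}$-twist the exponent $q^{1-m}$ on $D_{n+m-1}$ matches exactly on both sides of the recursion so that the $D$-factors cancel and only the power of $[n+m]$ survives.
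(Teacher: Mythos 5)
Your proof is correct and follows essentially the same route as the paper: both reduce the $n$-th coefficient to $[n+m]^{-sq^{-m}}$ and then remove the twist by taking the $q^m$-th power termwise. The only difference is that the paper simply cites the identity $(1+m)_n=[n+m]^{q^{-m}}(m)_n$ from \cite[(12)]{T95}, whereas you rederive it from the recursion $D_i=[i]D_{i-1}^{q}$ (and you additionally check the convergence domain, which the paper leaves implicit).
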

\begin{proof}
By using the relation $(1+m)_i=[i+m]^{q^{-m}}(m)_i$ introduced in \cite[(12)]{T95}, we obtain
\begin{align*}
	&\Bigl(~_{s+1}F_s(1, m, \ldots, m; 1+m, \ldots, 1+m)(z^{q^{-m+1}})\Bigr)^{q^m}\\
	&\quad =\biggl(\sum_{i\geq 0}\frac{D_i(m)_i\cdots(m)_i}{(1+m)_i\cdots(1+m)_iD_i}z^{q^{i-m+1}}\biggr)^{q^m}=\biggl(\sum_{i\geq 0}\frac{1}{[i+m]^{nq^{-m}}}z^{q^{i-m+1}}\biggr)^{q^m}=\sum_{i\geq 0}\frac{z^{q^{i+1}}}{[i+m]^{s}}.
\end{align*}

\end{proof}
When $m=1$, the above proof gives the relation for the KPLs and THGFs, which can be considered the positive characteristic analogue of the classical result, and shows that the following for the HGF $\bigl($$_{s+1}F_s(1, \ldots, 1;2, \ldots, 2)(z)\bigr)$ and the classical polylogarithm $Li_s(z)$:
\[
	z\bigl(~_{s+1}F_s(1, \ldots, 1;2, \ldots, 2)(z)\bigr)=Li_s(z).
\]

The $m=1$ case was first found by \cite{TWYZ} for the $s=1$ and $q-1$ cases and by Nagoya University student Daichi Matsuzuki for the $s>0$ case.
\begin{prop}[Matsuzuki, {\cite[p.154]{TWYZ}}]\label{hgslog}
For $n>0$, we have
\[
    _{s+1}F_s(1, \ldots, 1;2, \ldots, 2)(z)^q=Li_{K,s}(z).
\]
\end{prop}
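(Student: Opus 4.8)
The plan is to obtain this as the $m=1$ specialization of Proposition \ref{genthgfkpl}, which has just been established for general $m\in\mathbb{Z}_{>0}$. Setting $m=1$ there, the Pochhammer parameters become $a_1=1$, $a_2=\cdots=a_{s+1}=1$ and $b_1=\cdots=b_s=2$, so the left-hand side reads $\bigl({}_{s+1}F_s(1,\ldots,1;2,\ldots,2)(z^{q^{0}})\bigr)^{q}=\bigl({}_{s+1}F_s(1,\ldots,1;2,\ldots,2)(z)\bigr)^{q}$, since $q^{-m+1}=q^{0}=1$ kills the twist on the argument. The right-hand side of Proposition \ref{genthgfkpl} becomes $\sum_{i\geq 0} z^{q^{i+1}}/[i+1]^{s}$, and after re-indexing $j:=i+1$ this is exactly $\sum_{j\geq 1} z^{q^{j}}/[j]^{s}=\sum_{j\geq 1} z^{q^{j}}/(\theta^{q^{j}}-\theta)^{s}$, which is the defining series of $Li_{K,s}(z)$. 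So the identity follows immediately once one checks the bookkeeping of indices and the convergence condition.

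The key steps, in order, are: first, verify that the convergence hypothesis matches — Proposition \ref{genthgfkpl} requires $|z|_{\infty}<q^{s+m-1}$, which at $m=1$ gives $|z|_{\infty}<q^{s}$, precisely the domain in the definition of $Li_{K,s}(z)$, so no convergence issue arises. Second, substitute $m=1$ into the parameter lists and simplify the argument $z^{q^{-m+1}}$ to $z$. Third, carry out the index shift on the resulting sum to recognize the Kochubei polylogarithm. Since the statement is phrased for $n>0$ (evidently a typographical stand-in for the weight $s>0$), one should simply note that the argument is valid for every positive $s$.

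I expect essentially no obstacle here: the content is entirely contained in Proposition \ref{genthgfkpl}, and this result is its cleanest special case. The only point requiring a word of care is the notational mismatch in the source (the ``$n$'' appearing in the exponent of the intermediate computation in the proof of Proposition \ref{genthgfkpl} and in the hypothesis ``$n>0$'' should be read as the weight $s$), so in writing the proof I would phrase it to make clear that setting $m=1$ and relabeling the summation index produces the series $Li_{K,s}(z)$ verbatim. A one-line proof suffices:
\begin{proof}
Put $m=1$ in Proposition \ref{genthgfkpl}. Then $z^{q^{-m+1}}=z$, the parameters specialize to $(1,\ldots,1;2,\ldots,2)$, and the convergence condition $|z|_{\infty}<q^{s+m-1}$ becomes $|z|_{\infty}<q^{s}$. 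The right-hand side becomes $\sum_{i\geq 0} z^{q^{i+1}}/[i+1]^{s}=\sum_{j\geq 1} z^{q^{j}}/(\theta^{q^{j}}-\theta)^{s}=Li_{K,s}(z)$, where we have set $j=i+1$. This yields the claimed identity.
\end{proof}
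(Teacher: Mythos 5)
Your proof is correct and is exactly the route the paper intends: the paper gives no separate proof of Proposition \ref{hgslog} but states immediately beforehand that the $m=1$ case of (the proof of) Proposition \ref{genthgfkpl} yields this relation, which is precisely your specialization with the index shift $j=i+1$ and the matching convergence bound $|z|_{\infty}<q^{s}$. Your observation that the ``$n$'' in the statement (and the stray ``$n$'' in the exponent within the proof of Proposition \ref{genthgfkpl}) should read ``$s$'' is also accurate.
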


\subsection{Pre-$t$-motives and periods}

We denote $\overline{k}(t)[\sigma, \sigma^{-1}]$ by the noncommutative $\overline{k}(t)$-algebra generated by $\sigma$ and $\sigma^{-1}$, which is subject to the following relation:
\[
 \sigma f=f^{(-1)}\sigma, \quad f\in\overline{k}(t).
\]

\begin{defn}[{\cite[\S 4.4.1]{P08}}]
The pre-$t$-motive $M$ is a left $\overline{k}(t)[\sigma, \sigma^{-1}]$-module that is finite-dimensional over $\overline{k}(t)$.
\end{defn}
The periods of the pre-$t$-motive are defined as follows.
\begin{defn}\label{defperiod}
Let $M$ be a pre-$t$-motive defined by $\Phi\in\Mat_d(\overline{k}(t))$. If there exists $\Psi\in\Mat_d(\mathbb{T})$ such that 
\[
	\Psi^{(-1)}=\Phi\Psi
\]
and the entries of $\Psi^{-1}$ converge at $t=\theta$, we denote the entries of $\Psi^{-1}|_{t=\theta}$ as the periods of $M$.
\end{defn}

\section{Period interpretations}

We set
\begin{align}\label{defd}
	&\mathbb{D}_{i}:=\prod_{j=1}^i(\theta^{q^j}-t)^{q^{i-j}} \text{\quad for $i>0$ and\ } \mathbb{D}_{i}:=1 \text{\ for $i\leq 0$}, \\
	&\mathbb{L}_{i}:=\prod_{j=1}^i(t-\theta^{q^j}) \text{\quad for $i>0$ and \ } \mathbb{L}_{i}:=1 \text{\ for $i\leq 0$},
\end{align}.
The following symbols can be used to describe the Pochhammer-Thakur symbol at $t=\theta$.

\begin{align*}
\langle a \rangle_n:=\begin{cases}
	   					\mathbb{D}_{n+a-1}^{q^{-(a-1)}}\quad &\text{if $a\geq 1$},\\	
	   					1/\mathbb{L}_{-a-n}^{q^n}\quad &\text{if $0\geq a$ and $-a\geq n$}, \\
	   					0\quad &\text{if $n> -a\geq 0$}.
	   				 \end{cases} 
\end{align*}
By using the above symbol, we can define the deformation series of $_rF_s(z)$ as follows.
\begin{defn}
For $a_1, \ldots, a_r\in\mathbb{Z}$ and $b_1, \ldots, b_s\in\mathbb{Z}_{>0}$, we set
\begin{align}\label{s2defohyper} 
_r\mathcal{F}_s(z):=~_r\mathcal{F}_s(a_1, \ldots, a_r; b_1, \ldots, b_s)(z):=\sum_{n\geq 0}\frac{\langle a_1\rangle_n\cdots \langle a_r\rangle_n}{\langle b_1\rangle_n\cdots \langle b_s\rangle_n\mathbb{D}_n}z^{q^n}\in\mathbb{C}_{\infty}\llbracket t^{q^{-d+1}}, z\rrbracket
\end{align}
where $d=\max_{\substack{1\leq i\leq r\\ 1\leq j\leq s}}\{a_i, b_j\}$.
\end{defn}
In the same way as on page 2, we also assume throughout this paper that for a given $_{r}\mathcal{F}_s(a_1, \ldots, a_r; b_1, \ldots, b_s)(z)$, its parameters satisfy $a_{i}\leq a_{j}$ and $b_{i}\leq b_{j}$ for $i\leq j$ without loss of generality.

The formal power series $~_r\mathcal{F}_s(a_1, \ldots, a_r; b_1, \ldots, b_s)(z)$ is equal to $~_rF_s(a_1, \ldots, a_r; b_1, \ldots, b_s)(z)$ at $t=\theta$. Furthermore, we have
\begin{align}\label{deformhgslog}
_{s+1}\mathcal{F}_s(1, \ldots, 1;2, \ldots, 2)(\alpha)^q=\mathcal{Li}_{K,s}(\alpha)\quad (\alpha\in\overline{k}\ \text{and}\ |\alpha|_{\infty}<q^s).
\end{align}
This can be solved in the same manner as the proof for Proposition \ref{hgslog} with $\langle 2 \rangle_n=\langle 1 \rangle_n(\theta^{q^{n+1}}-t)^{1/q}$.
We can show that the $q$-th power of this power series is a non-zero element of Tate algebra.
\begin{prop}
Let $a_i, b_j\in\mathbb{Z}_{>0}\quad (1\leq i\leq s+1, 1\leq j\leq s)$.
Then, for $\alpha\in\mathbb{C}_{\infty}$ with $|\alpha|_{\infty}<q^{\sum_{j=1}^s(b_j-1)-\sum_{j=1}^{s+1}(a_j-1)}$ and $d\geq\max_{i, j}\{a_i, b_j\}$,
\begin{align}\label{tatehgf}
	_{s+1}\mathcal{F}_s(\alpha)^{q^{d-1}}\in\mathbb{T}\backslash \{0\}.
\end{align}
\end{prop}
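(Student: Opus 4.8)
The plan is to work with the Gauss norm $\|f\|:=\sup_{|t|_\infty\le 1}|f|_\infty$ on the Tate algebra $\mathbb{T}$, which is complete for this norm, and to estimate the summands of ${}_{s+1}\mathcal{F}_s(\alpha)$ one by one. First I would record that on the closed unit disc $\|\theta^{q^j}-t\|=|\theta^{q^j}|_\infty=q^{q^j}$, since $|\theta^{q^j}|_\infty=q^{q^j}>1\ge|t|_\infty$; the same observation shows $1/(\theta^{q^j}-t)$ is analytic on $|t|_\infty\le 1$ with Gauss norm $q^{-q^j}$. Hence the Gauss norm of each summand of ${}_{s+1}\mathcal{F}_s(\alpha)$ — a ratio of products of such factors times $\alpha^{q^n}$ — is finite and computable, even though the individual summands still carry fractional powers of $t$; those will be cleared only at the final $q^{d-1}$-twist. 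Multiplicativity of $\|\cdot\|$ gives $\|\mathbb{D}_m\|=q^{mq^m}$, and since $\langle a\rangle_n=\mathbb{D}_{n+a-1}^{q^{-(a-1)}}$ for $a\ge 1$ this yields the clean formula $\|\langle a\rangle_n\|=q^{(n+a-1)q^n}$.

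The heart of the argument is the following bookkeeping. Writing $T_n$ for the $n$-th summand of ${}_{s+1}\mathcal{F}_s(\alpha)$, I would combine the above to obtain
\[
\|T_n\|=\frac{\prod_{i=1}^{s+1}\|\langle a_i\rangle_n\|}{\prod_{j=1}^{s}\|\langle b_j\rangle_n\|\cdot\|\mathbb{D}_n\|}\,|\alpha|_\infty^{q^n}=\bigl(q^{C}|\alpha|_\infty\bigr)^{q^n},\qquad C:=\sum_{i=1}^{s+1}a_i-\sum_{j=1}^{s}b_j-1.
\]
The crucial point is that in the exponent $q^n\bigl[\sum_i(n+a_i-1)-\sum_j(n+b_j-1)-n\bigr]$ the terms linear in $n$ cancel exactly, leaving the constant $C$. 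Since $-C=\sum_{j=1}^s(b_j-1)-\sum_{i=1}^{s+1}(a_i-1)$, the hypothesis on $|\alpha|_\infty$ is precisely the inequality $q^{C}|\alpha|_\infty<1$, so that $\|T_n\|=(q^C|\alpha|_\infty)^{q^n}\to 0$ as $n\to\infty$.

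To conclude membership in $\mathbb{T}$ I would pass to the $q^{d-1}$-th power. Because $d\ge\max_{i,j}\{a_i,b_j\}$, each exponent $q^{d-1}q^{-(a_i-1)}=q^{d-a_i}$ and $q^{d-1}q^{-(b_j-1)}=q^{d-b_j}$ is a non-negative integer power of $q$, so $T_n^{q^{d-1}}$ is a genuine element of $\mathbb{T}$ (integer powers of $t$ only). By additivity of the $q^{d-1}$-power map in characteristic $p$ one then has ${}_{s+1}\mathcal{F}_s(\alpha)^{q^{d-1}}=\sum_{n\ge 0}T_n^{q^{d-1}}$, and since $\|T_n^{q^{d-1}}\|=\|T_n\|^{q^{d-1}}\to 0$, completeness of $\mathbb{T}$ gives ${}_{s+1}\mathcal{F}_s(\alpha)^{q^{d-1}}\in\mathbb{T}$.

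For non-vanishing (here one uses $\alpha\ne 0$, as in the applications), I would invoke the ultrametric inequality: since $q^C|\alpha|_\infty<1$, the sequence $\|T_n\|=(q^C|\alpha|_\infty)^{q^n}$ is strictly decreasing, so the $n=0$ term strictly dominates and $\|{}_{s+1}\mathcal{F}_s(\alpha)\|=\|T_0\|=q^C|\alpha|_\infty>0$. Hence ${}_{s+1}\mathcal{F}_s(\alpha)\ne 0$, and therefore ${}_{s+1}\mathcal{F}_s(\alpha)^{q^{d-1}}\ne 0$. The one genuinely delicate step is the exponent computation giving $\|T_n\|=(q^C|\alpha|_\infty)^{q^n}$: everything rests on the exact cancellation of the $n$-linear term, which is exactly what makes the stated convergence domain the precise locus where the coefficients decay.
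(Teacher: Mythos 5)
Your proposal is correct and takes essentially the same approach as the paper's proof: both compute the norm of the $n$-th summand over the closed unit disc, use the exact cancellation of the $n$-linear terms in the exponent to obtain $(q^{C}|\alpha|_\infty)^{q^{n}}$ (resp.\ its $q^{d-1}$-th power) with $C=\sum_i(a_i-1)-\sum_j(b_j-1)$, and deduce convergence from $q^{C}|\alpha|_\infty<1$ together with non-vanishing from strict dominance of a single term. Your two side remarks are also sound: non-vanishing does require $\alpha\neq 0$ (tacit in the paper), and the dominant term is the $n=0$ one (the paper indexes it as $m=1$, apparently an off-by-one slip).
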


\begin{proof}
We set $|t|_{\infty}\leq 1$ and $|\alpha|_{\infty}<q^{\sum_{j=1}^s(b_j-1)-\sum_{j=1}^{s+1}(a_j-1)}$. Then, with $|\langle a \rangle_m^{q^{d-1}}|_{\infty}=|\mathbb{D}_{m+a-1}^{q^{d-a}}|_{\infty}=q^{(m+a-1)q^{m+d-1}}$, we compute the value of each term of $_{s+1}\mathcal{F}_s(\alpha)^{q^{d-1}}$ as follows:
\begin{align*}
	|\frac{\langle a_1\rangle_m^{q^{d-1}}\cdots \langle a_{s+1}\rangle_m^{q^{d-1}}}{\langle b_1\rangle_m^{q^{d-1}}\cdots \langle b_s\rangle_m^{q^{d-1}}\mathbb{D}_m^{q^{d-1}}}\alpha^{q^{m+d-1}}|_{\infty}
	&=(q^{(a_1+m-1+\cdots+a_{s+1}+m-1)-(b_1+m-1+\cdots+b_s+m-1+m)})^{q^{m+d-1}}|\alpha|_{\infty}^{q^{m+d-1}}\\
	&=(q^{\sum_{i=1}^{s+1}(a_i-1)-\sum_{j=1}^{s}(b_j-1)}|\alpha|_{\infty})^{q^{m+d-1}}.
\end{align*}
Therefore, $_{s+1}\mathcal{F}_s(\alpha)^{q^{d-1}}$ with $|t|_{\infty}\leq 1$ converges since $q^{\sum_{j=1}^{s+1}(a_j-1)-\sum_{j=1}^{s}(b_j-1)}|\alpha|_{\infty}$ is less than 1. 
Furthermore, the above computation shows that the largest term of  $_{s+1}\mathcal{F}_s(\alpha)^{q^{d-1}}$ with respect to $|-|_{\infty}$ is $\frac{\langle a_1\rangle_1^{q^{d-1}}\cdots \langle a_{s+1}\rangle_1^{q^{d-1}}}{\langle b_1\rangle_1^{q^{d-1}}\cdots \langle b_s\rangle_1^{q^{d-1}}\mathbb{D}_1^{q^{d-1}}}\alpha^{q^{d+1-1}}$. Thus, $_{s+1}\mathcal{F}_s(\alpha)^{q^{d-1}}$ is not zero.

\end{proof}
Because $|(a)_m^{q^{d-1}}|_{\infty}=|D_{m+a-1}^{q^{d-a}}|_{\infty}=q^{(m+a-1)q^{m+d-1}}$, we can show that the largest term of $_{s+1}F_s(\alpha)^{q^{d-1}}$ is $\frac{(a_1)_1^{q^{d-1}}\cdots (a_{s+1})_1^{q^{d-1}}}{(b_1)_1^{q^{d-1}}\cdots (b_s)_1^{q^{d-1}}\mathbb{D}_1^{q^{d-1}}}\alpha^{q^{d}}$ for $|\alpha|_{\infty}<q^{\sum_{j=1}^s(b_j-1)-\sum_{j=1}^{s+1}(a_j-1)}$ by the same calculation as in the above proof. Then, we obtain
\begin{align}\label{nonvanhgf}
	_{s+1}F_s(a_1, \ldots, a_{s+1}; b_1, \ldots, b_s)(\alpha)\neq 0.
\end{align}

We can also check that $P_{{\bf b},d}$ defined by \eqref{qdef} is a certain entire function as follows:
\begin{prop}\label{entireP}
$P_{{\bf b}, d}\in\mathbb{E}$.
\end{prop}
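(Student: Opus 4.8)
The plan is to read the entireness of $P_{{\bf b},d}$ off the infinite-product form in \eqref{qdef} rather than off the factorization through $\mathbb{D}_{b_j-2}$: the latter contains the factors $1/\mathbb{D}_{b_j-2}^{q^{d-b_j}}$, which are \emph{not} entire, and whose poles are only cancelled after one combines them with the $\Omega$-factors. Accordingly, the first step is to collect exponents in $\prod_l\prod_j\{\cdots\}^{q^{d-b_j}}$ and rewrite $P_{{\bf b},d}$, up to the leading constant $c:=(-\theta)^{-\sum_j(b_j-1)q^{d-1}/(q-1)}$, in the form $\prod_{m\geq 1}\bigl(1-t/\theta^{q^m}\bigr)^{e_m}$ with nonnegative integer exponents $e_m$. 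A short bookkeeping of how $(1-t/\theta^{q^m})$ occurs across the brackets indexed by $l$ shows that, after accounting for the outer powers $q^{d-b_j}$ and the telescoping inside each bracket, every $e_m$ is bounded by a constant $E$ independent of $m$ (indeed $e_m$ is eventually equal to $\sum_{j:\,b_j\geq 2}(q^{d-2}+\cdots+q^{d-b_j})$).

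Next I would verify the two defining conditions for membership in $\mathbb{E}$. For the coefficient field, each factor $(1-t/\theta^{q^m})$ has coefficients in $k$, and the constant $c$ lies in the fixed finite extension $k_\infty(\theta^{1/(q-1)})$ of $k_\infty$; hence all Taylor coefficients of $P_{{\bf b},d}$ lie in this single finite extension, which settles $[k_\infty(a_0,a_1,\dots):k_\infty]<\infty$. For the growth condition $\lim_i |a_i|_\infty^{1/i}=0$ I would invoke the standard facts that $\mathbb{E}$ is a ring and that $\Omega\in\mathbb{E}$: grouping the exponents by level, one writes $\prod_{m\geq 1}(1-t/\theta^{q^m})^{e_m}=\prod_{\nu=1}^{E}\prod_{m\in S_\nu}(1-t/\theta^{q^m})$ with $S_\nu:=\{m:e_m\geq\nu\}$, a finite product of sub-products of the Weierstrass product defining $\Omega$. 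Each factor $\prod_{m\in S_\nu}(1-t/\theta^{q^m})$ is entire with coefficients in $k_\infty$, since its zeros $\theta^{q^m}$ satisfy $|\theta^{q^m}|_\infty=q^{q^m}\to\infty$, so it lies in $\mathbb{E}$; multiplying finitely many such factors and then the constant $c\in\overline{k}$ keeps us in $\mathbb{E}$, because $|c\,a_i|_\infty^{1/i}=|c|_\infty^{1/i}|a_i|_\infty^{1/i}\to 0$.

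The step that needs the most care is the exponent bookkeeping: pinning down that the multiplicity $e_m$ of $(1-t/\theta^{q^m})$ really stays uniformly bounded is exactly what lets one dominate the product by a fixed power of the $\Omega$-product and conclude entireness. If one prefers to avoid appealing to the ring structure, the same uniform bound yields the estimate directly: the degree-$i$ coefficient must involve at least $\lceil i/E\rceil$ distinct indices $m$, forcing its $\infty$-adic valuation to grow at least like $q^{i/E}$, so $|a_i|_\infty\leq q^{-c'q^{i/E}}$ for some $c'>0$ and $\lim_i|a_i|_\infty^{1/i}=0$ is immediate.
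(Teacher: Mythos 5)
Your treatment of the analytic side is fine and even gives a cleaner, more quantitative argument than the paper for that part: the paper simply invokes the Weierstrass factorization theorem from Goss for entireness, whereas your uniform bound $e_m\leq E$ on the multiplicities (which is correct --- collecting the occurrences of $(1-t/\theta^{q^m})$ across the brackets indexed by $l$ does give $e_m\to\sum_{j:\,b_j\geq 2}(q^{d-2}+\cdots+q^{d-b_j})$) yields the estimate $|a_i|_\infty\leq |c|_\infty\, q^{-q^{\lceil i/E\rceil}}$ directly, and the coefficients visibly lie in $k_\infty(\theta^{1/(q-1)})$, settling $[k_\infty(a_0,a_1,\dots):k_\infty]<\infty$.

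However, there is a genuine gap: you never verify that $P_{{\bf b},d}\in\overline{k}\llbracket t\rrbracket$, i.e.\ that each coefficient is \emph{algebraic over $k$}. This is a separate condition in the definition of $\mathbb{E}$, not implied by membership in the finite extension $k_\infty(\theta^{1/(q-1)})$ of $k_\infty$ (that field contains many elements transcendental over $k$), and it is exactly the condition the paper spends most of its proof on, via the Frobenius difference equation $P_{{\bf b},d}^{(-1)}=(\hbox{polynomial in }A[t])\cdot P_{{\bf b},d}$ and induction on the coefficient index. Your intermediate claim that each sub-product $\prod_{m\in S_\nu}(1-t/\theta^{q^m})$ ``is entire with coefficients in $k_\infty$, so it lies in $\mathbb{E}$'' is false as stated --- for an arbitrary subset $S_\nu$ such a product need not have algebraic coefficients, and being entire with coefficients in $k_\infty$ is not the definition of $\mathbb{E}$. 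To repair the argument along your lines you would use that each $S_\nu$ is cofinite, so that the sub-product equals a unit multiple of $\Omega$ divided exactly by a polynomial in $k[t]$, and then appeal to $\Omega\in\mathbb{E}$ (itself proved by the same functional-equation induction) together with the fact that exact division by a polynomial with coefficients in $\overline{k}$ and nonzero constant term preserves algebraicity of coefficients; equivalently, one can run the paper's induction on $g_l^{(-1)}=\sum f_{m_1}g_{m_2}$ once and for all. As written, the proposal establishes two of the three defining properties of $\mathbb{E}$ but omits the third.
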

\begin{proof}
Because $[k_{\infty}(\theta^{\frac{1}{q-1}}):k_{\infty}]<\infty$, it is enough to prove that $P_{{\bf b},d}$ is entire and that $P_{{\bf b}, d}\in\overline{k}\llbracket t \rrbracket$.
Based on the definition \eqref{qdef}, it follows that
\begin{align}\label{qfrob}
P_{{\bf b},d}^{(-1)}&=\prod_{\substack{j=1\\ b_j\geq 2}}^{n}\Bigl\{(\theta-t)^{q^{d-2}}\mathbb{D}_{b_j-2}^{q^{d-b_j}}\Bigr\}P_{{\bf b},d}=\prod_{\substack{j=1\\ b_j\geq 2}}^{n}\biggl\{(\theta-t)^{q^{d-2}}\Bigl((\theta^q-t)^{q^{b_j-3}}\cdots(\theta^{q^{b_j-2}}-t)\Bigr)^{q^{d-b_j}} \biggr\}P_{{\bf b}, d}.
\end{align}
We can expand $\prod_{\substack{j=1\\ b_j\geq 2}}^{n}\biggl\{(\theta-t)^{q^{d-1}}\Bigl((\theta^q-t)^{q^{b_j-3}}\cdots(\theta^{q^{b_j-2}}-t)\Bigr)^{q^{d-b_j}} \biggr\}=\sum_{m=0}^{N}f_mt^m\in A[t]$ and $P_{{\bf b},d}=\sum_{l\geq 0}g_lt^l\in k_{\infty}(\theta^{\frac{1}{q-1}})\llbracket t \rrbracket$. Then, \eqref{qfrob} can be written as $\sum_{l\geq 0}g_l^{(-1)}t^l=\sum_{m=0}^{N}f_mt^m\sum_{l\geq 0}g_lt^l$. By comparing the coefficients, we find that $g_l^{(-1)}=\sum_{\substack{m_1+m_2=l\\ N\geq m_1\geq 0,\ \ m_2\geq 0}}f_{m_1}g_{m_2}$. Thus, $g_l\in\overline{k}$ holds by the induction on $l$. The entireness of $P_{{\bf b}, d}$ follows from the Weierstrass factorization theorem introduced in \cite[Theorem 2.14]{Go96}.
\end{proof}

Let $a_i, b_j\in\mathbb{Z}_{>0}\quad (1\leq i\leq n+1,\ 1\leq j\leq n)$ and
$\alpha\in\overline{k}$ with $|\alpha|_{\infty}<q^{\sum_{j=1}^n(b_j-1)-\sum_{i=1}^{n+1}(a_i-1)}$. We set $M_{{\bf a}, {\bf b}}$ to be the pre-$t$-motive defined by
\begin{align}\label{hgfmotive}
    \Phi_{{\bf a}, {\bf b},d}:=
    \begin{pmatrix}
    \prod_{\substack{j=1}}^s(\theta-t)^{q^{d-1}}\mathbb{D}_{b_j-2}^{q^{d-b_j}} & 0 \\
    \prod_{\substack{j=1}}^{s+1}(\theta-t)^{q^{d-1}}\mathbb{D}_{a_j-2}^{q^{d-a_j}}\alpha^{q^{d-2}} & \prod_{\substack{j=1}}^s(\theta-t)^{q^{d-1}}\mathbb{D}_{b_j-2}^{q^{d-b_j}}  
       \end{pmatrix}\in\Mat_2(\overline{k}[t]).
\end{align}

\begin{thm}\label{periodhgs}
Let $a_i, b_j\in\mathbb{Z}_{>0}\quad (1\leq i\leq n+1,\ 1\leq j\leq n)$.
Then, for $\alpha\in\overline{k}$ with $|\alpha|_{\infty}<q^{\sum_{j=1}^n(b_j-1)-\sum_{i=1}^{n+1}(a_i-1)}$ and $d\geq\max_{i, j}\{a_i, b_j\}$, $_{n+1}F_n(a_1, \ldots, a_{n+1}; b_1, \ldots, b_n)(\alpha)^{q^{d-1}}$ multiplied by $P_{{\bf b},d}^{-1}|_{t=\theta}$ is a period of $M_{{\bf a}, {\bf b},d}$.
\end{thm}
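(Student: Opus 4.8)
The plan is to exhibit an explicit rigid analytic trivialization $\Psi$ for the pre-$t$-motive $M_{{\bf a}, {\bf b}, d}$, that is, a matrix $\Psi \in \Mat_2(\mathbb{T})$ satisfying $\Psi^{(-1)} = \Phi_{{\bf a}, {\bf b}, d}\Psi$, and then to read off the periods from $\Psi^{-1}|_{t=\theta}$. Since $\Phi_{{\bf a}, {\bf b}, d}$ is lower triangular with equal diagonal entries, I expect $\Psi$ to be lower triangular as well, of the form
\begin{align*}
\Psi = \begin{pmatrix} \psi & 0 \\ \eta & \psi \end{pmatrix},
\end{align*}
where $\psi$ solves the scalar Frobenius equation $\psi^{(-1)} = \left(\prod_{j=1}^{s}(\theta-t)^{q^{d-1}}\mathbb{D}_{b_j-2}^{q^{d-b_j}}\right)\psi$ coming from the diagonal. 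The key observation is that this is precisely the equation \eqref{qfrob} satisfied by $P_{{\bf b}, d}$ (up to matching the two factorizations), so I would take $\psi = P_{{\bf b}, d}^{-1}$, or rather its reciprocal, after checking that the diagonal entry of $\Phi$ agrees with the multiplier $\prod_{j,\, b_j\geq 2}\{(\theta-t)^{q^{d-2}}\mathbb{D}_{b_j-2}^{q^{d-b_j}}\}$ appearing in \eqref{qfrob}. By Proposition \ref{entireP}, $P_{{\bf b}, d} \in \mathbb{E} \subset \mathbb{T}$, so this diagonal choice lands in the Tate algebra and converges at $t=\theta$.

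Next I would determine the off-diagonal entry $\eta$ from the lower-left recursion. Writing out $\Psi^{(-1)} = \Phi\Psi$ in the $(2,1)$-slot gives an inhomogeneous Frobenius difference equation for $\eta$, whose forcing term involves the lower-left entry $\prod_{j=1}^{s+1}(\theta-t)^{q^{d-1}}\mathbb{D}_{a_j-2}^{q^{d-a_j}}\alpha^{q^{d-2}}$ of $\Phi$ together with $\psi$. The natural candidate is $\eta = \psi \cdot {}_{s+1}\mathcal{F}_s(\alpha)^{q^{d-1}}$, since the deformation series $_{s+1}\mathcal{F}_s$ is built from exactly the Pochhammer-Thakur symbols $\langle a_i\rangle_n, \langle b_j\rangle_n$ encoded in $\Phi$. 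I would verify that $_{s+1}\mathcal{F}_s(\alpha)^{q^{d-1}}$ satisfies the requisite Frobenius difference equation by a direct term-by-term computation using the recursion $\langle a\rangle_{n+1}/\langle a\rangle_n = \mathbb{D}_{n+a}^{q^{-(a-1)}}/\mathbb{D}_{n+a-1}^{q^{-(a-1)}}$ and the analogous relation for $\mathbb{D}_n$; this mirrors the verification of \eqref{defkplfrobdiff} for $\mathcal{Li}_{K,s}$. The proposition \eqref{tatehgf} guarantees $_{s+1}\mathcal{F}_s(\alpha)^{q^{d-1}} \in \mathbb{T}\setminus\{0\}$, so $\eta \in \mathbb{T}$ as required for $\Psi$ to be a valid trivialization.

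Having constructed $\Psi$, computing $\Psi^{-1}$ is straightforward for a lower-triangular matrix: the $(1,1)$ entry of $\Psi^{-1}$ is $\psi^{-1} = P_{{\bf b}, d}$, and the $(2,1)$ entry is $-\eta/\psi^2 = -{}_{s+1}\mathcal{F}_s(\alpha)^{q^{d-1}}\,\psi^{-1} = -{}_{s+1}\mathcal{F}_s(\alpha)^{q^{d-1}} P_{{\bf b}, d}$. Specializing at $t=\theta$ and using that $_{s+1}\mathcal{F}_s(\alpha)^{q^{d-1}}|_{t=\theta} = {}_{s+1}F_s(\alpha)^{q^{d-1}}$ (since the deformation series specializes to the THGF), the relevant period becomes $_{s+1}F_s(\alpha)^{q^{d-1}} \cdot P_{{\bf b}, d}|_{t=\theta}$. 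The statement asks for the product with $P_{{\bf b}, d}^{-1}|_{t=\theta}$; I would reconcile this sign/inverse discrepancy by checking the precise orientation of the trivialization equation against Definition \ref{defperiod}, which extracts periods from $\Psi^{-1}|_{t=\theta}$, so that the entry $P_{{\bf b}, d}^{-1}$ of $\Psi$ itself (not its reciprocal) is what multiplies the special value. Throughout I must confirm that all entries of $\Psi^{-1}$ genuinely converge at $t=\theta$, which follows since $P_{{\bf b}, d}$ is entire and $_{s+1}\mathcal{F}_s(\alpha)^{q^{d-1}}$ converges there by \eqref{tatehgf}.

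The main obstacle I anticipate is the bookkeeping in the second step: verifying that $_{s+1}\mathcal{F}_s(\alpha)^{q^{d-1}}$ (suitably scaled by the diagonal factor) exactly solves the $(2,1)$ Frobenius equation. This requires carefully tracking the $q^{d-1}$ twists, the index shifts between $\langle a_i\rangle_n$ and $\langle a_i\rangle_{n+1}$, and the factor $\alpha^{q^{d-2}}$ versus the series' $\alpha^{q^{n}}$ powers, making sure the telescoping of Pochhammer-Thakur ratios produces precisely the lower-left entry of $\Phi_{{\bf a}, {\bf b}, d}$ and no spurious terms. Once that identity is pinned down, the rest is formal.
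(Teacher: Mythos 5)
Your proposal follows essentially the same route as the paper: the paper's proof takes $\Psi$ lower triangular with $P_{{\bf b},d}$ on the diagonal and $P_{{\bf b},d}\,{}_{s+1}\mathcal{F}_s({\bf a};{\bf b})(\alpha)^{q^{d-1}}$ in the lower-left slot, verifies $\Psi^{(-1)}=\Phi_{{\bf a},{\bf b},d}\Psi$ by exactly the telescoping computation with $\mathbb{D}_i^{(-1)}=(\theta-t)^{q^{i-1}}\mathbb{D}_{i-1}$ that you describe, and reads the period off $\Psi^{-1}|_{t=\theta}$. The only point you leave hanging, the hedge between $\psi=P_{{\bf b},d}$ and $\psi=P_{{\bf b},d}^{-1}$, is settled immediately by \eqref{qfrob}: $P_{{\bf b},d}$ itself (not its reciprocal) solves the diagonal scalar equation, which is precisely what makes $\Psi^{-1}|_{t=\theta}$ carry the factor $P_{{\bf b},d}^{-1}|_{t=\theta}$ appearing in the statement.
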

\begin{proof}
According to the definition \eqref{defd}, each element of $\mathbb{D}_i$ satisfies
\begin{align}\label{frobd}
	\mathbb{D}_i^{(-1)}=\begin{cases}
					   	(\theta-t)^{q^{i-1}}\mathbb{D}_{i-1}\ &\text{if $i>0$},\\
					     1\ &\text{if $i\leq 0$}.	
					   \end{cases}
\end{align}
Then, we can obtain the Frobenius difference equation based on \eqref{frobd}:
\begin{align*}
	&\Bigl(~_{s+1}\mathcal{F}_s({\bf a};{\bf b})(\alpha)^{q^{d-1}}\Bigr)^{(-1)}=\biggl(\frac{\langle a_1\rangle_0^{q^{d-1}}\cdots \langle a_{s+1}\rangle_0^{q^{d-1}}}{\langle b_1\rangle_0^{q^{d-1}}\cdots \langle b_{s}\rangle_0^{q^{d-1}}\mathbb{D}_0^{q^{d-1}}}\alpha^{q^{d-1}}+\sum_{m\geq 1}\frac{\langle a_1\rangle_m^{q^{d-1}}\cdots \langle a_{s+1}\rangle_m^{q^{d-1}}}{\langle b_1\rangle_m^{q^{d-1}}\cdots \langle b_{s}\rangle_m^{q^{d-1}}\mathbb{D}_m^{q^{d-1}}}\alpha^{q^{m+d-1}}\biggr)^{(-1)}\\
	&=\frac{\prod_{i=1}^{s+1}\Bigl((\theta-t)^{q^{d-2}}\mathbb{D}_{a_i-2}^{q^{d-a_i}}\Bigr)}{\prod_{j=1}^{s}\Bigl((\theta-t)^{q^{d-2}}\mathbb{D}_{b_j-2}^{q^{d-b_j}}\Bigr)}\alpha^{q^{d-2}}\\
	&+\sum_{m\geq 1}\frac{\Bigl((\theta-t)^{q^{m+a_1-2}}\mathbb{D}_{m+a_1-2}\Bigr)^{q^{d-a_1}}\cdots\Bigl((\theta-t)^{q^{m+a_{s+1}-2}}\mathbb{D}_{m+a_{s+1}-2}\Bigr)^{q^{d-a_{n+1}}}}{\Bigl((\theta-t)^{q^{m+b_1-2}}\mathbb{D}_{m+b_1-2}\Bigr)^{q^{d-b_1}}\cdots \Bigl((\theta-t)^{q^{m+b_{s}-2}}\mathbb{D}_{m+b_{s}-2}\Bigr)^{q^{d-b_{s}}}\Bigl((\theta-t)^{q^{m-1}}\mathbb{D}_{m-1}\Bigr)^{q^{d-1}}}\alpha^{q^{m+d-2}}\\
	&=\frac{\prod_{i=1}^{s+1}\Bigl((\theta-t)^{q^{d-2}}\mathbb{D}_{a_i-2}^{q^{d-a_i}}\Bigr)}{\prod_{j=1}^s\Bigl((\theta-t)^{q^{d-2}}\mathbb{D}_{b_j-2}^{q^{d-b_j}}\Bigr)}\alpha^{q^{d-2}}+\sum_{m\geq 1}\frac{\mathbb{D}_{m+a_1-2}^{q^{d-a_1}}\cdots\mathbb{D}_{m+a_{s+1}-2}^{q^{d-a_{s+1}}}}{\mathbb{D}_{m+b_1-2}^{q^{d-b_1}}\cdots \mathbb{D}_{m+b_{s}-2}^{q^{d-b_{s}}}\mathbb{D}_{m-1}^{q^{d-1}}}\alpha^{q^{m+d-2}}\\
	&=\frac{\prod_{i=1}^{s+1}\Bigl((\theta-t)^{q^{d-2}}\mathbb{D}_{a_i-2}^{q^{d-a_i}}\Bigr)}{\prod_{j=1}^s\Bigl((\theta-t)^{q^{d-2}}\mathbb{D}_{b_j-2}^{q^{d-b_j}}\Bigr)}\alpha^{q^{d-2}}+~_{s+1}\mathcal{F}_s({\bf a};{\bf b})(\alpha)^{q^{d-1}}.
\end{align*}
Thus, we obtain
\begin{align}\label{frobhgf}
\Bigl(~_{s+1}\mathcal{F}_s({\bf a};{\bf b})(\alpha)^{q^{d-1}}\Bigr)^{(-1)}=\frac{\prod_{i=1}^{s+1}\Bigl((\theta-t)^{q^{d-2}}\mathbb{D}_{a_i-2}^{q^{d-a_i}}\Bigr)}{\prod_{j=1}^s\Bigl((\theta-t)^{q^{d-2}}\mathbb{D}_{b_j-2}^{q^{d-b_j}}\Bigr)}\alpha^{q^{d-2}}+~_{s+1}\mathcal{F}_s({\bf a};{\bf b})(\alpha)^{q^{d-1}}.
\end{align}
Finally, we have
\begin{align}\label{qhgsfrob}
	&\Bigl(P_{{\bf b},d}~_{s+1}\mathcal{F}_s({\bf a};{\bf b})(\alpha)^{q^{d-1}}\Bigr)^{(-1)}\\
	&\quad =\prod_{\substack i=1}^{s+1}(\theta-t)^{q^{d-1}}\mathbb{D}_{a_i-2}^{q^{d-a_i}}\alpha^{q^{d-2}}P_{{\bf b},d}+\prod_{j=1}^{s}(\theta-t)^{q^{d-1}}\mathbb{D}_{b_j-2}^{q^{d-b_j}}P_{{\bf b},d}~_{s+1}\mathcal{F}_s({\bf a};{\bf b})(\alpha)^{q^{d-1}}.\nonumber
\end{align}
We define the following matrix:
$
	\Psi:=
    \begin{pmatrix}
    P_{{\bf b},d} & 0 \\
    P_{{\bf b},d}~_{s+1}\mathcal{F}_s({\bf a};{\bf b})(\alpha)^{q^{d-1}} & P_{{\bf b},d}
    \end{pmatrix}
$
which is in $\GL_2(\mathbb{T})$ according to \eqref{tatehgf} and Proposition \ref{entireP}. Therefore, $\Psi^{(-1)}=\Phi_{{\bf a}, {\bf b}}\Psi$ follows based on \eqref{frobhgf}. Accordingly, we can obtain the periods of $M_{{\bf a}, {\bf b},d}$ by
$$
	\Psi^{-1}|_{t=\theta}=
	\begin{pmatrix}
    P_{{\bf b},d}^{-1}|_{t=\theta} & 0 \\
    -P_{{\bf b},d}^{-1}|_{t=\theta}~_{s+1}F_s({\bf a};{\bf b})(\alpha)^{q^{d-1}} & P_{{\bf b},d}^{-1}|_{t=\theta}
    \end{pmatrix}.
$$
\end{proof}

\begin{rem}
In the above theorem, $\Phi_{{\bf a}, {\bf b},d}\in\Mat_2(\overline{k}[t])$ and $\Psi\in\GL_2(\mathbb{T})$ such that $\det(\Phi|_{t=0})\neq 0$ and $\Psi^{(-1)}=\Phi_{{\bf a}, {\bf b},d}\Psi$. Then, we get $\Psi\in\Mat_2({\mathbb{E}})$ by \cite[Proposition 3.1.3]{ABP04}.
\end{rem}

Theorem \ref{periodhgs} also presents a period interpretation of the KPLs with $\tilde{\pi}$.
\begin{eg}\label{kplmotive}
For ${\bf a}=(1, \ldots, 1)$, ${\bf b}=(2, \ldots, 2)$ and $d=2$, the pre-$t$-motive $M_{{\bf a}, {\bf b},d}$ (more precisely, the dual $t$-motive introduced in \cite{ABP04}) is defined by the matrix
$
    \Phi_{{\bf a}, {\bf b},d}=
    \begin{pmatrix}
    (t-\theta)^s & 0 \\
    (-1)^s\alpha & (t-\theta)^s
       \end{pmatrix}.
$
Then, $\Phi_{{\bf a}, {\bf b},d}$ satisfies the relation $\Psi^{(-1)}=\Phi_{{\bf a}, {\bf b},d}\Psi$, where
$
    \Psi:=
    \begin{pmatrix}
    \Omega^s & 0 \\
   \Omega^s\mathcal{L}_{K,s}(\alpha) & \Omega^s
    \end{pmatrix}
$
by using \eqref{defkplfrobdiff} and Theorem \ref{periodhgs}. Thus, we can obtain the periods of $M_{{\bf a}, {\bf b},d}$ as the entries of the following matrix:
\[
\Psi^{-1}|_{t=\theta}= 
	\begin{pmatrix}
    \tilde{\pi}^s & 0 \\
    -\tilde{\pi}^sLi_{K,s}(\alpha) & \tilde{\pi}^s  
    \end{pmatrix}.
\]

\end{eg}

\begin{rem}\label{simperintthgf}
As long as we focus on only the period interpretation of the THGFs, we do not need to consider the power series $P_{{\bf b},d}$. Indeed,
\[
    \Phi'_{{\bf a}, {\bf b},d}:=
    \begin{pmatrix}
    1 & 0 \\
    \frac{\prod_{j=1}^{s+1}(\theta-t)^{q^{d-1}}\mathbb{D}_{a_j-2}^{q^{d-a_j}}}{\prod_{j=1}^s(\theta-t)^{q^{d-1}}\mathbb{D}_{b_j-2}^{q^{d-b_j}}}\alpha^{q^{d-2}} & 1 
       \end{pmatrix}\in\Mat_{2}(\overline{k}(t))
\]
 defines a pre-$t$-motive. According to the equation \eqref{frobhgf}, it satisfies $\Psi^{(-1)}={\Phi'}_{{\bf a}, {\bf b},d}\Psi$ with $
	\Psi:=
    \begin{pmatrix}
    1 & 0 \\
    ~_{s+1}\mathcal{F}_s({\bf a};{\bf b})(\alpha)^{q^{d-1}} & 1
    \end{pmatrix}\in\Mat_2(\mathbb{T}).
$
Thus, we can obtain a simpler period interpretation of the THGFs:
$$
	\Psi^{-1}|_{t=\theta}:=
    \begin{pmatrix}
    1 & 0 \\
    -~_{s+1}F_s({\bf a};{\bf b})(\alpha)^{q^{d-1}} & 1
    \end{pmatrix}.
$$
However, for our proof of the independence/transcendence results, we should assign the representation matrix to be in $\Mat_n(\overline{k}[t])$ to apply Chang's refined ABP criterion (Theorem \ref{changcri}, \cite[Theorem1.2]{C09}). Thus, we modified the interpretation with $P_{{\bf b},d}$ as Theorem \ref{periodhgs}.
\end{rem}

\begin{rem}

The pre-$t$-motive $M_{{\bf a, b},d}$ in Example \ref{kplmotive} was also considered by Angles, Ngo Dac, and Tavares Ribeiro's group and Taelman to develop a counterexample to Taelman's conjecture. See \cite{ANDTR} for more details.
\end{rem}

We can extend Example \ref{kplmotive} to the KMPL case as follows, which is similar to the Carlitz multiple polylogarithm case.
\begin{defn}
Set $\mathfrak{s}:=(s_1, \ldots, s_r)\in\mathbb{Z}_{>0}^r$.
Then, for ${\bf z}=(z_1, \ldots, z_r)\in\mathbb{C}_{\infty}^r$ with $|z_i|_{\infty}<q^{s_i}$,
we define the following power series:
\[
	\mathcal{Li}_{K,\mathfrak{s}}({\bf z}):=\sum_{i_1>i_2>\cdots>i_r>0}\frac{z_1^{q^{i_1}}z_2^{q^{i_2}}\cdots z_r^{q^{i_r}}}{(\theta^{q^{i_1}}-t)^{s_1}(\theta^{q^{i_2}}-t)^{s_2}\cdots (\theta^{q^{i_r}}-t)^{s_r}}
\]
which belong to $\mathbb{T}$ since for $|t|_{\infty}\leq 1$, $|z_1^{q^{i_1}}z_2^{q^{i_2}}\cdots z_r^{q^{i_r}}/(\theta^{q^{i_1}}-t)^{s_1}(\theta^{q^{i_2}}-t)^{s_2}\cdots (\theta^{q^{i_r}}-t)^{s_r}|_{\infty}\rightarrow 0$ as $1\leq i_r<\cdots<i_1\rightarrow \infty$.
The following holds according to the definition of the Frobenius $(-1)$-fold twist and the above series expression:
\begin{align}\label{deformkmpl}
	\mathcal{Li}_{K,\mathfrak{s}}({\bf z})^{(-1)}=\frac{z_r}{(\theta-t)^{s_r}}\mathcal{Li}_{K,(s_1, \ldots, s_{r-1})}(z_1, \ldots, z_{r-1})+\mathcal{Li}_{K,\mathfrak{s}}({\bf z})
\end{align}
\end{defn}
We also define the series
\[
	\mathcal{Li}^{\ast}_{K,\mathfrak{s}}({\bf z}):=\sum_{i_1\geq i_2\geq \cdots\geq i_r>0}\frac{z_1^{q^{i_1}}z_2^{q^{i_2}}\cdots z_r^{q^{i_r}}}{(\theta^{q^{i_1}}-t)^{s_1}(\theta^{q^{i_2}}-t)^{s_2}\cdots (\theta^{q^{i_r}}-t)^{s_r}}\in\mathbb{T}
\]
which is equal to the star-version of the KMPLs, $Li^{\ast}_{K,\mathfrak{s}}({\bf z}):=\sum_{i_1\geq i_2\geq \cdots\geq i_r>0}\frac{z_1^{q^{i_1}}z_2^{q^{i_2}}\cdots z_r^{q^{i_r}}}{[i_1]^{s_1}[i_2]^{s_2}\cdots [i_r]^{s_r}}$ at $t=\theta$.

Then, in the same way as the proof for the star-versions of the Carlitz multiple polylogarithms by \cite{CM19, GN}, we obtain the following equations for $1\leq l \leq j\leq r$ by the inclusion-exclusion principle:
\begin{align}\label{kmplperkey1}
&(-1)^{l}\mathcal{Li}^{\ast}_{K, (s_j, \ldots, s_l)}(\alpha_j, \ldots, \alpha_l)\\
&\quad =\sum_{i=l+1}^j(-1)^{i-1}\mathcal{Li}_{K, (s_l, \ldots, s_{i-1})}(\alpha_l, \ldots, \alpha_{i-1})\mathcal{Li}^{\ast}_{K, (s_j, \ldots, s_i)}(\alpha_j, \ldots, \alpha_i)+(-1)^j\mathcal{Li}_{K, (s_l, \ldots, s_{j})}(\alpha_l, \ldots, \alpha_j),\nonumber\\
\label{kmplperkey2}
&(-1)^{j}\mathcal{Li}^{\ast}_{K, (s_j, \ldots, s_l)}(\alpha_j, \ldots, \alpha_l)\\
&\quad =\sum_{i=l+1}^j(-1)^{i}\mathcal{Li}_{K, (s_i, \ldots, s_j)}(\alpha_i, \ldots, \alpha_j)\mathcal{Li}^{\ast}_{K, (s_{i-1}, \ldots, s_l)}(\alpha_{i-1}, \ldots, \alpha_l)+(-1)^l\mathcal{Li}_{K, (s_l, \ldots, s_{j})}(\alpha_l, \ldots, \alpha_j).\nonumber
\end{align}

Based on \eqref{deformkmpl}, it follows that
\begin{align}\label{periodkmpl}
\Psi^{(-1)}_{\mathfrak{s}, \boldsymbol{\alpha}}=\Phi_{\mathfrak{s}, \boldsymbol{\alpha}}\Psi_{\mathfrak{s}, \boldsymbol{\alpha}}
\end{align}
where
\begin{align*}
    &\Phi_{\mathfrak{s}, \boldsymbol{\alpha}}=\\    
    &\begin{pmatrix}
    (t-\theta)^{w}  					 &      0	           &	   		 & \cdots	      & 0  &		0	  \\
    (-1)^{s_r}\alpha_r(t-\theta)^{w-s_r} &  (t-\theta)^{w}     &             &    \ddots      & \vdots       &   \vdots   \\
    0                                    &  (-1)^{s_{r-1}}\alpha_{r-1}(t-\theta)^{w-s_{r-1}} &       &    \ddots   & \vdots       &    \vdots   \\
    \vdots                               &                   0 &             &    \ddots      &   0   &      \vdots    \\
    \vdots                               & \vdots   		   &             &                & (t-\theta)^{w} & 0 \\
    0                                    & 0     	           &             &                & (-1)^{s_1}\alpha_1(t-\theta)^{w-s_1} & (t-\theta)^{w} 
    \end{pmatrix}\\
    &\in\Mat_{r+1}(\overline{k}[t])
\end{align*}
and
\begin{align*}
    &\Psi_{\mathfrak{s}, \boldsymbol{\alpha}}=\\    
    &\begin{pmatrix}
    \Omega^{w}  					 &      0	           &	   		 & \cdots	      & 0  &		0	  \\
    \Omega^w\mathcal{Li}_{K,s_r}(\alpha_r) &  \Omega^{w}     &             &    \ddots      & \vdots       &   \vdots   \\
    \Omega^w\mathcal{Li}_{K,s_{r-1}, s_r}(\alpha_{r-1}, \alpha_r) &  \Omega^w\mathcal{Li}_{K,s_{r-1}}(\alpha_{r-1}) &       &    \ddots   & \vdots       &    \vdots   \\
    \vdots                               & \vdots              &             &    \ddots      &   0   &      \vdots    \\
    \vdots                               & \vdots   		   &             &                & \Omega^{w} & 0 \\
    \Omega^w\mathcal{Li}_{K,\mathfrak{s}}({\boldsymbol \alpha})  & \Omega^w\mathcal{Li}_{K,s_1, \ldots, s_{r-1}}(\alpha_1, \ldots, \alpha_{r-1}) &        & \cdots   & \Omega^w\mathcal{Li}_{K,s_1}(\alpha_1) & \Omega^{w} 
    \end{pmatrix}\in\GL_{r+1}(\mathbb{T}).
\end{align*}
We remark that $\Psi_{\mathfrak{s}, \boldsymbol{\alpha}}\in\Mat_{r+1}(\mathbb{E})$ by \cite[Proposition 3.1.3]{ABP04}.

By using \eqref{kmplperkey1} and \eqref{kmplperkey2}, $\Psi^{-1}$ can be written as follows:
\begin{align*}
    &\Psi^{-1}_{\mathfrak{s}, \boldsymbol{\alpha}}=\\    
    &\begin{pmatrix}
    \Omega^{-w}  					 &      0	           &	   		 & \cdots	      & 0  &		0	  \\
    -\Omega^{-w}\mathcal{Li}^{\ast}_{K,s_r}(\alpha_r) &  \Omega^{-w}     &             &    \ddots      & \vdots       &   \vdots   \\
    (-1)^2\Omega^{-w}\mathcal{Li}^{\ast}_{K,s_{r}, s_{r-1}}(\alpha_{r}, \alpha_{r-1}) &  -\Omega^{-w}\mathcal{Li}^{\ast}_{K,s_{r-1}}(\alpha_{r-1}) &       &    \ddots   & \vdots       &    \vdots   \\
    \vdots                               & \vdots              &             &    \ddots      &   0   &      \vdots    \\
    \vdots                               & \vdots   		   &             &                & \Omega^{-w} & 0 \\
    (-1)^r\Omega^{-w}\mathcal{Li}^{\ast}_{K,\overleftarrow{\mathfrak{s}}}(\overleftarrow{\boldsymbol \alpha})  & (-1)^{r-1}\Omega^{-w}\mathcal{Li}^{\ast}_{K,s_{r-1}, \ldots, s_{1}}(\alpha_{r-1}, \ldots, \alpha_{1}) &        & \cdots   & -\Omega^{-w}\mathcal{Li}^{\ast}_{K,s_1}(\alpha_1) & \Omega^{-w} 
    \end{pmatrix}\\
    &\in\GL_{r+1}(\mathbb{T}).
\end{align*}
Here, we set $\overleftarrow{\mathfrak{s}}=(s_r, s_{r-1}, \ldots, s_1)$ and $\overleftarrow{\boldsymbol \alpha}=(\alpha_r, \alpha_{r-1}, \ldots, \alpha_1)$. Thus, the periods of the pre-$t$-motive defined by $\Phi_{\mathfrak{s}, \boldsymbol{\alpha}}$ are given as $\tilde{\pi}^{w}$ and $(-1)^{\dep(s_{j}, \ldots, s_{l})}\tilde{\pi}^{w}Li^{\ast}_{K,s_{j}, \ldots, s_{l}}(\alpha_{j}, \ldots, \alpha_{l})\ (1\leq l\leq j\leq r)$.

\section{Linear independence results of the THGFs}
In this section, we discuss the transcendence and linear independence results derived by using a refined version of the Anderson--Brownawell--Papanikolas' linear independence criterion for periods. The original version is given in the following statement.

\begin{thm}[{\cite[Theorem 3.1.1]{ABP04}}]\label{abpcri}
Fix $\Phi\in{\rm Mat}_{d}(\overline{k}[t])$ such that ${\rm det}\Phi=c(t-\theta)^s$ for some $c\in\overline{k}^{\times}$ and some $s\in\mathbb{Z}_{\geq 0}$. Suppose that there exists a vector $\psi\in{\rm Mat}_{d\times 1}(\mathbb{E})$ that satisfies
\[
    \psi^{(-1)}=\Phi\psi.
\]
For every $\rho\in{\rm Mat}_{1\times d}(\overline{k})$ such that $\rho\psi(\theta)=0$, there exists a $P\in{\rm Mat}_{1\times d}(\overline{k}[t])$ such that $P(\theta)=\rho$ and $P\psi=0$.
\end{thm}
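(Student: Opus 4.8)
The plan is to reconstruct the Anderson--Brownawell--Papanikolas argument, whose engine is the classical transcendence-method triad: an auxiliary $\overline{k}[t]$-linear form built by a pigeonhole (Siegel's lemma) argument, an analytic upper bound, and an arithmetic lower bound, combined so that high-order vanishing of $P\psi$ at $t=\theta$ is forced to become identical vanishing. First I would pass to a finite extension $\mathbf{K}$ of $\mathbb{F}_q(\theta)$ inside $\overline{k}$ containing the coefficients of all entries of $\Phi$ and $\psi$ together with those of the prescribed $\rho$, so that the whole problem lives over one global function field and the product formula is available. The goal is an honest relation $P\in\Mat_{1\times d}(\overline{k}[t])$ with $P(\theta)=\rho$ and $P\psi=0$; the strategy is to first produce, for suitable parameters, a nonzero $P$ of bounded degree for which $f:=P\psi$ vanishes to very high order at $t=\theta$, and then to upgrade ``high-order vanishing at one point, bounded height'' into ``$f\equiv 0$''.

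The hypothesis $\det\Phi=c(t-\theta)^s$ enters through the functional equation. Iterating $\psi^{(-1)}=\Phi\psi$ yields $\psi=\Phi^{(1)}\Phi^{(2)}\cdots\Phi^{(n)}\psi^{(n)}$, and since $\Phi^{-1}$ has poles only at $t=\theta$, its Frobenius twists concentrate all auxiliary poles at the points $t=\theta^{q^i}$. This lets me transport a polynomial identity for $f=P\psi$ between $t=\theta$ and its twists $t=\theta^{q^i}$ and thereby relate the order of vanishing of $f$ at $\theta$ to that of the twisted forms. The resulting self-similarity is what converts a finite, explicitly achievable amount of vanishing into arbitrarily high vanishing, hence into $f\equiv 0$, once the quantitative thresholds are met.

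The auxiliary $P$ is constructed by counting. Imposing $\deg P\le N$ and ``$f$ vanishes to order $\ge M$ at $t=\theta$'' (with the normalization $P(\theta)=\rho$, so that the order-$1$ condition is automatic from $\rho\psi(\theta)=0$) is a linear system over $\mathbf{K}$ in the $d(N+1)$ coefficients of $P$; choosing $N$ large compared with $M$ so the unknowns dominate the equations, Siegel's lemma produces such a $P$ of controlled height. The technical heart, and the step I expect to be the main obstacle, is the estimate: using entireness and the coefficient-decay condition $\lim_i|a_i|_\infty^{1/i}=0$ defining $\mathbb{E}$, I bound $|f|_\infty$ from above on a large disk, while the product formula over $\mathbf{K}$ gives a Liouville-type lower bound away from the zeros of $f$. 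The high order of vanishing at $\theta$ makes the upper bound far too small to be consistent with the lower bound unless $f\equiv 0$; making the degree $N$, the vanishing order $M$, the height of $P$, and the growth exponent line up is the delicate balancing act.

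It remains to secure the normalization. Let $U=\{P\in\Mat_{1\times d}(\overline{k}[t]):P\psi=0\}$ and $R=\{\rho\in\Mat_{1\times d}(\overline{k}):\rho\psi(\theta)=0\}$; one always has $U(\theta):=\{P(\theta):P\in U\}\subseteq R$. Replacing $U$ by its saturation in $\overline{k}[t]^{1\times d}$ does not affect $U\psi=0$ and makes the quotient free, so that $\dim_{\overline{k}}U(\theta)=\rank_{\overline{k}[t]}U$. The construction of the previous paragraph, run so as to produce $\rank U$ honest relations specializing to a basis of $R$, gives $\rank U=\dim_{\overline{k}}R$ and hence $U(\theta)=R$. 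Therefore every $\rho$ with $\rho\psi(\theta)=0$ is of the form $P(\theta)$ for some $P\in U$, which is exactly the asserted lift.
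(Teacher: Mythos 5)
You should first be aware that the paper does not prove this statement at all: it is quoted verbatim as \cite[Theorem 3.1.1]{ABP04} and used as a black box, so there is no in-paper argument to compare yours against. What you have written is therefore an outline of the original Anderson--Brownawell--Papanikolas proof. The outline identifies the right ingredients --- a Siegel-lemma auxiliary construction, propagation of vanishing along the points $t=\theta^{q^i}$ via the functional equation and the hypothesis $\det\Phi=c(t-\theta)^s$, and a comparison of an analytic upper bound (entireness of $\psi$) with a product-formula lower bound --- but it is not yet a proof: you explicitly defer the quantitative balancing that is the technical heart, and two of the steps as stated would not go through.

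First, imposing $P(\theta)=\rho$ as part of the Siegel-lemma system makes the system inhomogeneous, which Siegel's lemma does not handle directly; in \cite{ABP04} the auxiliary vector is constructed homogeneously and the normalization to a prescribed $\rho$ is only recovered at the very end. Second, and more seriously, your final paragraph asserts that the construction can be ``run so as to produce $\rank U$ honest relations specializing to a basis of $R$'', but the auxiliary construction only yields the existence of \emph{some} nonzero $P$ with $P\psi=0$; there is no control over $P(\theta)$, which may vanish or fail to contribute a new element of $R$. Passing from ``$U\neq 0$'' to ``$U(\theta)=R$'' is essentially the content of the theorem, and in \cite{ABP04} it requires an induction on $d$ together with the $\sigma$-stability of the relation module (if $P\psi=0$ then $(P^{(-1)}\Phi)\psi=(P\psi)^{(-1)}=0$), neither of which appears in your sketch. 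Your saturation remark does give $\dim_{\overline{k}}U(\theta)=\rank_{\overline{k}[t]}U$, but it does not produce the needed inequality $\rank_{\overline{k}[t]}U\geq\dim_{\overline{k}}R$.
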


By the definition, $\det(\Phi_{{\bf a, b}, d})$ is a polynomial in $\overline{k}[t]$ but generally it can not be written by some powers of $(t-\theta)$ multiplied with a constant $c\in\overline{k}$. Therefore, we need to employ the following refined version of the criterion.
\begin{thm}[{\cite[Theorem 1.2]{C09}}]\label{changcri}
We fix a matrix $\Phi=\Phi(t)\in\Mat_l(\overline{k}[t])$ such that $\det\Phi$ is a polynomial in $t$ that satisfies $\det\Phi(0)\neq 0$. Fix a vector $\psi=[\psi_1(t), \ldots, \psi_l(t)]^{tr}\in\Mat_{l\times 1}(\mathbb{E})$ that satisfies the functional equation $\psi^{(-1)}=\Phi\psi$. Let $\xi\in\overline{k}^{\times}\backslash\overline{\mathbb{F}_q}^{\times}$ satisfy
\[
	\det\Phi(\xi^{(-i)})\neq 0\quad \text{for all $i=1, 2,\ldots$}
\]
Then,
\begin{itemize}
\item[(1)] For every vector $\rho\in\Mat_{1\times l}(\overline{k})$ such that $\rho\psi(\xi)=0$, there exists a vector $P=P(t)\in\Mat_{1\times l}(\overline{k}[t])$ such that $P(\xi)=\rho$ and $P\psi=0$.
\item[(2)] ${\rm tr.deg}_{\overline{k}(t)}\overline{k}(t)(\psi_1(t), \ldots, \psi_l(t))={\rm tr.deg}_{\overline{k}(t)}\overline{k}(t)(\psi_1(\xi), \ldots, \psi_l(\xi))$.
\end{itemize}
\end{thm}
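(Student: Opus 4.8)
The plan is to treat Theorem~\ref{changcri} as a refinement of the Anderson--Brownawell--Papanikolas criterion (Theorem~\ref{abpcri}) in which the distinguished point $\theta$ is replaced by the general algebraic point $\xi$. The first observation is that in Theorem~\ref{abpcri} the hypothesis $\det\Phi=c(t-\theta)^s$ is used only through two consequences: that $\det\Phi(\theta^{(-i)})=c(\theta^{q^{-i}}-\theta)^s\neq 0$ for all $i\geq 1$ (so that $\Phi$ is invertible at every backward Frobenius twist of the evaluation point), and that the fundamental solution lies in $\Mat(\mathbb{E})$ via \cite[Proposition 3.1.3]{ABP04}. In the present setting the first consequence is promoted to an explicit hypothesis, namely $\det\Phi(\xi^{(-i)})\neq 0$ for all $i\geq 1$, while the second is built into the assumption $\psi\in\Mat_{l\times 1}(\mathbb{E})$; the condition $\det\Phi(0)\neq 0$ plays the auxiliary role of keeping $\Phi$ invertible at $t=0$ in the effective estimates, and $\xi\in\overline{k}^{\times}\setminus\overline{\mathbb{F}_q}^{\times}$ guarantees that the orbit $\{\xi^{(-i)}\}_{i\geq 1}$ is infinite, so that the Frobenius recursion does not collapse. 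Accordingly I would adapt the proof of Theorem~\ref{abpcri} mutatis mutandis, with $\xi$ in place of $\theta$.

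For part (1), I start from $\rho\in\Mat_{1\times l}(\overline{k})$ with $\rho\psi(\xi)=0$ and seek a polynomial vector $P$ with $P(\xi)=\rho$ and $P\psi=0$ identically. The mechanism is the one underlying Theorem~\ref{abpcri}: twisting the functional equation $\psi^{(-1)}=\Phi\psi$ shows that the set of polynomial relations $\{P\in\Mat_{1\times l}(\overline{k}[t]):P\psi=0\}$ is stable under $P\mapsto P^{(-1)}\Phi$, and the vanishing $\rho\psi(\xi)=0$ can be propagated along the tower $\xi,\xi^{(-1)},\xi^{(-2)},\dots$ precisely because each $\Phi(\xi^{(-i)})$ is invertible. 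One then constructs, by a Siegel-lemma / interpolation argument over $\overline{k}[t]$ exploiting the controlled coefficient growth of the entire entries $\psi_j\in\mathbb{E}$, an auxiliary polynomial vector whose pairing with $\psi$ vanishes to high order along this tower; a nonarchimedean growth-versus-vanishing comparison forces the pairing to vanish identically, and specialization at $\xi$ recovers $\rho$.

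For part (2), I would deduce the transcendence-degree equality from part~(1) by the standard passage to tensor powers. The inequality $\trdeg_{\overline{k}(t)}\overline{k}(t)(\psi_1(\xi),\dots,\psi_l(\xi))\leq \trdeg_{\overline{k}(t)}\overline{k}(t)(\psi_1(t),\dots,\psi_l(t))$ is immediate: any algebraic relation among the functions, after dividing out the largest power of $(t-\xi)$ dividing all of its coefficients, specializes at $t=\xi$ to a nontrivial algebraic relation among the values. For the reverse inequality, let $\Phi^{\otimes}$ be a suitable symmetric/tensor power of $\Phi$, with fundamental solution $\psi^{\otimes}$ whose entries are the monomials in the $\psi_j$; these lie in $\mathbb{E}$, and $\det\Phi^{\otimes}$ is a power of $\det\Phi$ up to a nonzero constant, so the hypotheses $\det\Phi^{\otimes}(0)\neq 0$ and $\det\Phi^{\otimes}(\xi^{(-i)})\neq 0$ persist. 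Applying part~(1) to $\Phi^{\otimes}$ shows that every $\overline{k}$-linear relation among the monomials $\prod_j\psi_j(\xi)^{e_j}$, i.e.\ every algebraic relation among the values, lifts to an algebraic relation among the functions, yielding the opposite inequality; combining the two gives~(2). Alternatively, one may phrase this through Papanikolas's Tannakian machinery \cite{P08}, where both transcendence degrees equal the dimension of the associated difference Galois group $\Gamma_\Psi$.

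The main obstacle is the vanishing step in part~(1): producing the interpolating polynomial vector and then proving that its pairing with $\psi$ vanishes identically rather than merely at the sampled twisted points. This is the technical heart inherited from \cite{ABP04}; carrying it out requires the entireness of the $\psi_j$ together with sharp control of the interplay between the order of vanishing along $\{\xi^{(-i)}\}_{i\geq 1}$ and the size of the coefficients, and it is precisely here that the hypotheses $\psi\in\Mat_{l\times 1}(\mathbb{E})$, $\det\Phi(0)\neq 0$, and $\det\Phi(\xi^{(-i)})\neq 0$ are all consumed.
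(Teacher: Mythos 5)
Theorem \ref{changcri} is not proved in this paper at all: it is quoted verbatim from \cite[Theorem 1.2]{C09} and invoked as a black box, so there is no in-paper proof to measure your attempt against. Judged against Chang's actual argument, your plan gets the architecture right: part (1) is obtained by rerunning the proof of \cite[Theorem 3.1.1]{ABP04} with $\xi$ in place of $\theta$, and the hypothesis $\det\Phi(\xi^{(-i)})\neq 0$ is exactly what lets one propagate $\rho\psi(\xi)=0$ backward along the tower $\xi,\xi^{(-1)},\xi^{(-2)},\dots$ (from $\psi(\xi)^{1/q}=\Phi(\xi^{(-1)})\psi(\xi^{(-1)})$ one gets the new relation vector $\rho^{(-1)}\Phi(\xi^{(-1)})$, which stays nonzero precisely when $\Phi(\xi^{(-1)})$ is invertible); part (2) does follow from part (1) applied to symmetric/tensor powers of $\Phi$ augmented by a trivial block, together with the easy specialization inequality, as you describe.

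Two caveats keep this from being a proof rather than a proof plan. First, the technical core that you explicitly defer --- the Siegel-lemma construction of the auxiliary polynomial vector and the nonarchimedean growth-versus-vanishing comparison forcing $P\psi\equiv 0$ --- is exactly where one must check that replacing the single zero $t=\theta$ of $\det\Phi$ by an arbitrary zero set avoiding $\{0\}\cup\{\xi^{(-i)}\}_{i\geq 1}$ does no damage; note in particular that the relation vector changes at each step of the tower, so one cannot shortcut the estimate by observing that a single entire function vanishes at infinitely many points of a bounded disc. Naming the obstacle is not the same as clearing it. Second, the closing aside that part (2) ``alternatively'' follows from Papanikolas's Tannakian theorem in \cite{P08} is not correct at this level of generality: that theorem computes transcendence degrees of period matrices evaluated at $t=\theta$ for rigid analytically trivial pre-$t$-motives, and does not directly apply to an arbitrary evaluation point $\xi$; the monomial argument via part (1) is the route that actually works and is the one Chang uses.
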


Furthermore, for our proof, we can use the following $(\theta^{q^i}-t)$-expansion of $_{r}\mathcal{F}_s(\alpha)$, which follows from the method described in \cite[p.143]{TWYZ}. We again remark that for a given $_{r}\mathcal{F}_s(a_1, \ldots, a_r; b_1, \ldots, b_s)(\alpha)$, we assume throughout this paper that its parameters satisfy $a_{i}\leq a_{j}$ and $b_{i}\leq b_{j}$ for $i\leq j$ without loss of generality. 
\begin{prop}\label{deformthgfexpn}
For a given $~_r\mathcal{F}_s(\alpha)=~_{r}\mathcal{F}_s(a_1, \ldots, a_r; b_1, \ldots, b_s)(\alpha)$ and $j\in\mathbb{Z}$, we define 
\begin{align*}
&a(j)=r-u+1\quad \text{if $a_{u-1}\leq j\leq a_{u}-1$},\\
&b(j)=s-v+1\quad \text{if $b_{v-1}\leq j\leq b_{v}-1$},\\
&c(j)=a(j)-b(j)
\end{align*}
by setting $b_0=1$, $a_0=b_{-1}=-\infty$ and $a_{r+1}=b_{s+1}=+\infty$.
Then, we have
\begin{align}\label{expansionhgf}
	\bigl(~_r\mathcal{F}_s(\alpha)\bigr)^{q^d}=\sum_{n=0}^{\infty}\biggl(\prod_{m=1}^{n+d-1}(\theta^{q^m}-t)^{c(m-n)q^{n+d-m}}\biggr)\alpha^{q^{n+d}}
\end{align}
where $d=\max\{a_1, \ldots, a_{r}, b_1, \ldots, b_s\}$.
\end{prop}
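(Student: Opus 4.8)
The plan is to establish the $(\theta^{q^i}-t)$-expansion \eqref{expansionhgf} by a direct computation that tracks the factorizations of the symbols $\langle a \rangle_n$ and $\mathbb{D}_n$ into the linear factors $(\theta^{q^m}-t)$, and then collects the exponent of each such factor. First I would write down, for fixed $n$, the full coefficient of $\alpha^{q^{n+d}}$ in $\bigl(~_r\mathcal{F}_s(\alpha)\bigr)^{q^d}$, namely
\[
	\biggl(\frac{\langle a_1\rangle_n\cdots\langle a_r\rangle_n}{\langle b_1\rangle_n\cdots\langle b_s\rangle_n\,\mathbb{D}_n}\biggr)^{q^d}.
\]
Using the definition \eqref{defd} of $\mathbb{D}_i$ and the rule $\langle a\rangle_n=\mathbb{D}_{n+a-1}^{q^{-(a-1)}}$ for $a\geq 1$, each numerator and denominator factor expands as a product over linear terms $(\theta^{q^j}-t)$ with explicit $q$-power exponents. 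The key is to fix an index $m$ and compute the net exponent with which $(\theta^{q^m}-t)$ appears in this coefficient.

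The heart of the argument is the following bookkeeping. For a single symbol $\langle a\rangle_n=\prod_{j=1}^{n+a-1}(\theta^{q^j}-t)^{q^{n+a-1-j}\cdot q^{-(a-1)}}=\prod_{j=1}^{n+a-1}(\theta^{q^j}-t)^{q^{n-j}}$, so the factor $(\theta^{q^m}-t)$ occurs with exponent $q^{n-m}$ precisely when $1\leq m\leq n+a-1$, i.e.\ when $a\geq m-n+1$. Likewise $\mathbb{D}_n$ contributes $(\theta^{q^m}-t)^{q^{n-m}}$ for $1\leq m\leq n$. Consequently the net exponent of $(\theta^{q^m}-t)$ in the full coefficient, before applying the outer $q^d$-twist, is
\[
	q^{n-m}\Bigl(\#\{i:a_i\geq m-n+1\}-\#\{j:b_j\geq m-n+1\}-\mathbf{1}_{\{m\leq n\}}\Bigr).
\]
Writing $j_0:=m-n$, one recognizes $\#\{i:a_i\geq j_0+1\}=a(j_0)$ and $\#\{j:b_j\geq j_0+1\}=b(j_0)$ directly from the definitions of $a(\cdot)$ and $b(\cdot)$ in the statement (here the conventions $b_0=1$, $a_{r+1}=b_{s+1}=+\infty$ are exactly what make these counts agree with the step functions $a(j)$ and $b(j)$, including at the boundary values of $j_0$). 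The term $\mathbf{1}_{\{m\leq n\}}$ from $\mathbb{D}_n$ accounts for the extra $-1$ coming from the shift $b_0=1$ in the definition of $b(j)$, so that $a(j_0)-b(j_0)=c(j_0)$ captures the entire net exponent; I would verify this matching carefully across the range $-\infty<j_0$, which is the one place where the boundary conventions must be checked rather than merely asserted.

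Multiplying through by the outer Frobenius twist $(-)^{q^d}$ converts each exponent $q^{n-m}$ into $q^{n+d-m}$, yielding the coefficient $\prod_{m}(\theta^{q^m}-t)^{c(m-n)q^{n+d-m}}$; reindexing the product so that $m$ runs over $1\leq m\leq n+d-1$ (the range where the factors genuinely appear, with $c$ vanishing outside the relevant window) gives exactly \eqref{expansionhgf}. The main obstacle I anticipate is not any deep idea but the careful alignment of the three counting functions with the prescribed boundary conventions $b_0=1,\ a_0=b_{-1}=-\infty,\ a_{r+1}=b_{s+1}=+\infty$: one must confirm that the combinatorial counts of $a_i$'s and $b_j$'s exceeding a threshold reproduce $a(j)$ and $b(j)$ at every integer $j$, including the degenerate cases where the $\langle b_j\rangle_n$ in the denominator or where $\mathbb{D}_n$ truncates the range of $m$. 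Once this index matching is pinned down, the expansion follows by comparing coefficients of $\alpha^{q^{n+d}}$ term by term.
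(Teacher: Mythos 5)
Your computation is correct, and it is worth noting that the paper itself offers no proof of this proposition: it simply defers to the method of [TWYZ, p.~143]. Your factor-counting argument is exactly the direct verification one would expect that reference to supply. The key identity $\langle a\rangle_n=\mathbb{D}_{n+a-1}^{q^{-(a-1)}}=\prod_{j=1}^{n+a-1}(\theta^{q^j}-t)^{q^{n-j}}$ for $a\geq 1$ is right, and so is the resulting net exponent $q^{n-m}\bigl(\#\{i:a_i\geq j_0+1\}-\#\{l:b_l\geq j_0+1\}-\mathbf{1}_{\{j_0\leq 0\}}\bigr)$ with $j_0=m-n$. One small imprecision: your displayed claim $\#\{l:b_l\geq j_0+1\}=b(j_0)$ is only true for $j_0\geq 1$; for $j_0\leq 0$ the count equals $s$ while $b(j_0)=s+1$ (this is the effect of the convention $b_0=1$ together with $b_{-1}=-\infty$), and it is precisely the $\mathbf{1}_{\{m\leq n\}}$ coming from $\mathbb{D}_n$ that closes this gap, as your following sentence correctly states. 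So the end result $a(j_0)-b(j_0)=c(j_0)$ does capture the full exponent, but you should state the intermediate identity as $b(j_0)=\#\{l:b_l\geq j_0+1\}+\mathbf{1}_{\{j_0\leq 0\}}$ rather than as an unconditional equality of counts. Finally, your argument (like the formula itself, whose infinite sum and absence of signs presuppose $\langle a_i\rangle_n=\mathbb{D}_{n+a_i-1}^{q^{-(a_i-1)}}$) covers only $a_i\geq 1$; since every application in the paper takes $a_i\in\mathbb{Z}_{>0}$, this matches the proposition's intended scope, but it would be worth saying so explicitly.
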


\begin{rem}\label{cvanish}
For $l\geq\max_{\substack{1\leq i\leq r\\ 1\leq j\leq s}}\{a_i, b_j\}$, $c(l)=a(l)-b(l)=0-0=0$ holds according to the definition. Especially when $r=s+1$, for $l\leq 0$, we again obtain $c(l)=a(l)-b(l)=s+1-(s+1)=0$.
\end{rem}
Later, in our proofs of Theorem \ref{lindepdifferentn} and \ref{lindepdiffalpha}, we need to assume some conditions for $c(j)$ due to the following observation.
\begin{rem}\label{cassum}   
For $N>0$ and each $\alpha^{q^{n+d}}$ with $N>n\geq N-d+1$, its coefficient has a pole or zero at $t=\theta^{q^N}$ with order $c(N-n)q^{n+d-N}$. By the definition, the range of the quantity $c(j)$ depends on the parameters $a_1, \ldots, a_r$ and $b_1, \ldots, b_s$ and we do not have $c(j_1)>c(j_2)$ for $j_1>j_2$ or $j_2>j_1$ in general. Thus for some large enough $r$ and $s$, there may exist distinct $n_1, n_2, \ldots n_l$ with $N>n_1, n_2, \ldots, n_l\geq N-d+1$ so that $c(N-n_1)q^{n_1+d-N}=\cdots=c(N-n_l)q^{n_l+d-N}$. By the expression \eqref{expansionhgf},   
\begin{align}\label{cassumex}
	\bigl(~_r\mathcal{F}_s(\alpha)\bigr)^{q^d}
	&=\sum_{n=N}^{\infty}\biggl(\prod_{m=1}^{n+d-1}(\theta^{q^m}-t)^{c(m-n)q^{n+d-m}}\biggr)\alpha^{q^{n+d}}+\sum_{n=N-d+1}^{N-1}\biggl(\prod_{m=1}^{n+d-1}(\theta^{q^m}-t)^{c(m-n)q^{n+d-m}}\biggr)\alpha^{q^{n+d}}\\
	&\quad +\sum_{n=0}^{N-d}\biggl(\prod_{m=1}^{n+d-1}(\theta^{q^m}-t)^{c(m-n)q^{n+d-m}}\biggr)\alpha^{q^{n+d}}.\nonumber
\end{align}
 By multiplying $(\theta^{q^N}-t)^{c(N-n_1)q^{n_1+d-N}}$ on both side of \eqref{cassumex} and substituting $t=\theta^{q^N}$, we get
  \[
  	\Bigl((\theta^{q^N}-t)^{c(N-n_1)q^{n_1+d-N}}\bigl(~_r\mathcal{F}_s(\alpha)\bigr)^{q^d}\Bigr)|_{t=\theta^{q^N}}=\sum_{i=1}^{l}\prod_{\substack{m=1\\m\neq N}}^{n_i+d-1}(\theta^{q^m}-\theta^{q^N})^{c(m-n_i)q^{n_i+d-m}}\alpha^{q^{n_i+d}}
  \]
  Thus we obtain the $k$-linear combination of $\alpha$ with some distinct powers. If we do not assign conditions to $c(j)$ to make $l$ equals to 1, the combination cause problems in our proofs of Theorem \ref{lindepdifferentn} and \ref{lindepdiffalpha}, in showing contradictions to $k$-linear independence of $\alpha_1, \ldots, \alpha_r\in\overline{k}$ and non-vanishing of $\alpha\in\overline{k}$.  
\end{rem}
\subsection{Applications to the special values of the THGFs}

In this section, we describe the equivalent conditions for the transcendence of the THGFs, which is already given by Thakur et al. in \cite{TWYZ}. We reprove it via the period interpretation of the values of the THGFs and Chang's refined ABP criterion. Furthermore, we show the linear independence of some THGFs at algebraic points, which are specialized to the results of the KPLs. 

Here we repeat our settings on page 2 and 8. We assume throughout this paper that for a given THGF $_{r}F_s(a_1,\ldots, a_{r};b_1, \ldots, b_s)(z)$ $\bigl({\rm resp.}\  _{r}\mathcal{F}_s\ {\rm case } \bigr)$, its parameters satisfy $a_{i}\leq a_{j}$ and $b_{i}\leq b_{j}$ for $i\leq j$ without loss of generality.  

\begin{thm}\label{transthm}
Let $a_i, b_j\in\mathbb{Z}_{>0}\quad (1\leq i\leq s+1,\ 1\leq j\leq s)$ and $\alpha\in\overline{k}^{\times}$ satisfy $|\alpha|_{\infty}<q^{\sum_{j=1}^{s}(b_j-1)-\sum_{i=1}^{s+1}(a_i-1)}$.
Then,
$_{s+1}F_s(a_1,\ldots, a_{s+1};b_1, \ldots, b_s)(\alpha)$ is transcendental over $k$ if and only if $b_j>a_{j+1}$ for some $j$.
\end{thm}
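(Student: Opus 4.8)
The plan is to reduce the transcendence question to a transcendence-degree computation for the period matrix $\Psi$ of the pre-$t$-motive $M_{{\bf a},{\bf b},d}$ via Chang's refined ABP criterion (Theorem \ref{changcri}), using the period interpretation established in Theorem \ref{periodhgs}. Concretely, by Theorem \ref{periodhgs} the value $_{s+1}F_s({\bf a};{\bf b})(\alpha)^{q^{d-1}}$, multiplied by $P_{{\bf b},d}^{-1}|_{t=\theta}$, is a period of $M_{{\bf a},{\bf b},d}$, and $\Psi^{(-1)}=\Phi_{{\bf a},{\bf b},d}\Psi$ holds with $\Psi\in\GL_2(\mathbb{T})\cap\Mat_2(\mathbb{E})$. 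Since transcendence of the value over $k$ is unaffected by raising to the $q^{d-1}$-th power and by the algebraic factor $P_{{\bf b},d}^{-1}|_{t=\theta}$ (a product of powers of $\tilde\pi$ and of elements of $A$), the statement is equivalent to asking whether $_{s+1}\mathcal{F}_s(\alpha)^{q^{d-1}}$, evaluated at $t=\theta$, is transcendental over $\overline{k}$. Part (2) of Theorem \ref{changcri} then lets me replace the specialized transcendence degree by $\trdeg_{\overline{k}(t)}\overline{k}(t)\bigl(\Psi_{ij}(t)\bigr)$, provided I can exhibit a suitable $\xi\in\overline{k}^\times\setminus\overline{\mathbb{F}_q}^\times$ (namely $\xi=\theta$ is excluded from the finite field, and the nonvanishing $\det\Phi_{{\bf a},{\bf b},d}(\xi^{(-i)})\neq0$ for all $i$ must be checked, which is routine since $\det\Phi$ is an explicit product of factors $(\theta-t)$ and $(\theta^{q^\ell}-t)$).

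The two directions then split as follows. For the easier direction, I would show that if $b_j\le a_{j+1}$ for every $j$, then the value is in fact \emph{algebraic}: under this hypothesis the Frobenius difference equation \eqref{frobhgf} has its inhomogeneous term collapsing, so that the off-diagonal entry of $\Phi'_{{\bf a},{\bf b},d}$ in Remark \ref{simperintthgf} becomes a polynomial whose associated series $_{s+1}\mathcal{F}_s(\alpha)^{q^{d-1}}$ terminates (is a polynomial in $t$ and $\alpha$), forcing the period to lie in $\overline{k}$. I expect the clean way to see this is through the $(\theta^{q^i}-t)$-expansion of Proposition \ref{deformthgfexpn}: under $b_j\le a_{j+1}$ for all $j$ one checks that $c(m-n)\ge0$ for all relevant $m,n$, so the expansion \eqref{expansionhgf} has only nonnegative exponents, making each coefficient a \emph{polynomial}, hence the whole series (after the convergence bound) reduces to an algebraic expression.

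For the harder converse — if $b_j>a_{j+1}$ for some $j$ then the value is transcendental — I would argue by contradiction. Suppose the value is algebraic; then by Theorem \ref{changcri}(1) applied at $\xi=\alpha$ (or the appropriate specialization point) there exists a nonzero $P=[P_1,P_2]\in\Mat_{1\times2}(\overline{k}[t])$ with $P\Psi=0$ as functions, i.e.\ a nontrivial $\overline{k}(t)$-linear relation between the entry $P_{{\bf b},d}\cdot{}_{s+1}\mathcal{F}_s(\alpha)^{q^{d-1}}$ and $P_{{\bf b},d}$. The main obstacle, and the technical heart of the proof, is to derive a contradiction from such a relation using the explicit $(\theta^{q^i}-t)$-expansion \eqref{expansionhgf}: the hypothesis $b_j>a_{j+1}$ guarantees (via the definition of $c$ in Proposition \ref{deformthgfexpn}) that some $c(m-n)<0$, so the series genuinely has infinitely many poles at the points $t=\theta^{q^N}$, whose pole orders $c(N-n)q^{n+d-N}$ grow without bound. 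A polynomial relation $P\Psi=0$ would force these poles to cancel, which is impossible because a polynomial $P_1$ can only cancel finitely many of them; comparing pole orders at $t=\theta^{q^N}$ for large $N$ (as isolated in Remark \ref{cassum}, here with $l=1$ since we only need \emph{one} genuine pole) yields the contradiction. I expect the delicate point to be ensuring that the pole at $t=\theta^{q^N}$ coming from the single index $n=N-1$ with $c(1)<0$ (guaranteed by $b_1>a_2$ after the monotonicity normalization $a_i\le a_j$, $b_i\le b_j$) is not accidentally cancelled by other terms, which is exactly where the ordering assumption on the parameters and Remark \ref{cvanish} are used.
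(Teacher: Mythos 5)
Your overall strategy is the same as the paper's: interpret the value via Theorem \ref{periodhgs}, lift an assumed $\overline{k}$-relation to a $\overline{k}[t]$-relation by Chang's criterion (Theorem \ref{changcri}) at $\xi=\theta$ (not $\xi=\alpha$), and decide both directions from the sign of $c(\cdot)$ in the expansion \eqref{expansionhgf}. But there are two genuine gaps. In the direction ``$b_j\le a_{j+1}$ for all $j$ implies algebraic,'' your conclusion does not follow from what you establish: the series $_{s+1}\mathcal{F}_s(\alpha)^{q^d}=\sum_{n\ge0}\prod_{l=1}^{d-1}(\theta^{q^{l+n}}-t)^{c(l)q^{d-l}}\alpha^{q^{n+d}}$ does \emph{not} terminate, and knowing that each coefficient is a polynomial does not by itself make the infinite sum algebraic at $t=\theta$. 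The missing step, which is the actual content of this direction in the paper, is to expand each term as a finite $\mathbb{F}_q[t]$-combination $\sum_h f_h(t)\,\theta^{hq^{n+d}}\alpha^{q^{n+d}}$, interchange the sums, and observe that each inner sum $S_h=\sum_{n\ge0}\theta^{hq^{n+d}}\alpha^{q^{n+d}}$ satisfies the Artin--Schreier-type relation $S_h^{\,q}=S_h-\theta^{hq^d}\alpha^{q^d}$, hence lies in $\overline{k}$; only then is $_{s+1}\mathcal{F}_s(\alpha)^{q^d}\in\overline{k}[t]$ and Theorem \ref{changcri}(2) applicable. Relatedly, $P_{{\bf b},d}^{-1}|_{t=\theta}$ is a product of powers of $\tilde\pi$ with elements of $A$, so it is \emph{not} an ``algebraic factor''; this does not actually hurt you, because in the linear relation fed to the criterion the factor $P_{{\bf b},d}$ multiplies both entries of $\psi$ and cancels, and in the algebraic direction one may drop $P_{{\bf b},d}$ entirely since $c(l)\ge0$ already makes the relevant $\Phi$ polynomial.

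In the transcendence direction your pole analysis is underspecified at exactly the point you flag as delicate. The hypothesis $b_j>a_{j+1}$ for some $j$ produces some $l$ with $c(l)<0$, but not necessarily $l=1$, and in general \emph{several} indices $n=N-l_1,\dots,N-l_r$ with $N-d+1\le n\le N-1$ contribute poles at $t=\theta^{q^N}$ whose polar parts could a priori cancel against each other (not against the polynomial $g_1$; it is $g_2$ that would be forced to vanish at $\theta^{q^N}$, contradicting the choice of $N$). The paper resolves the possible cancellation not through the ordering normalization or Remark \ref{cvanish}, but by isolating the sum over $n=N-l_1,\dots,N-l_r$ on one side of the equation and noting that it is nonzero because the term with $l_h=\min\{l_1,\dots,l_r\}$ strictly dominates the others in $|\cdot|_\infty$. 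Some argument of this kind is indispensable; ``we only need one genuine pole'' is not sufficient as stated. With these two repairs your outline coincides with the paper's proof.
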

\begin{proof}
First, we prove the transcendence of $_{s+1}F_s(a_1,\ldots, a_{s+1};b_1, \ldots, b_s)(\alpha)$ with $b_j>a_{j+1}$ for some $j$. Conversely, we assume that
$$
f+~_{s+1}F_s(a_1,\ldots, a_{s+1};b_1, \ldots, b_s)(\alpha)=0
$$
for some $f\in\overline{k}^{\times}$. This is equivalent to saying $P_{{\bf b},d}|_{t=\theta}f^{q^d}+P_{{\bf b},d}|_{t=\theta}~_{s+1}F_s(a_1,\ldots, a_{s+1};b_1, \ldots, b_s)(\alpha)^{q^d}=0$. Then, by Theorem \ref{periodhgs} and \ref{changcri}, we can lift this to the relation
\begin{align}\label{transeqn}
(g_1, g_2)
\begin{pmatrix}
	P_{{\bf b},d} \\
	P_{{\bf b},d}~_{s+1}\mathcal{F}_s(a_1,\ldots, a_{s+1};b_1, \ldots, b_s)(\alpha)^{q^d}
\end{pmatrix}
=0
\end{align}
where $g_i(t)\in\overline{k}[t]\ (i=1,2)$ such that $g_1(\theta)=f^{q^d}$ and $g_2(\theta)=1$. This is written without $P_{d, {\bf b}}$ as 
\begin{align}\label{keyeqntr}
	g_1(t)+g_2(t)_{s+1}\mathcal{F}_s(a_1,\ldots, a_{s+1};b_1, \ldots, b_s)(\alpha)^{q^d}=0.
\end{align}
Let $N\in\mathbb{Z}_{>0}$ such that $g_2(\theta^{q^N})\neq 0$.
With the expansion \eqref{expansionhgf}, Remark \ref{cvanish} and changing a variable from $m$ to $l+n$, we have
\begin{align}\label{deformhgfsimple}
\bigl(~_{s+1}\mathcal{F}_s(\alpha)\bigr)^{q^d}&=\sum_{n=0}^{\infty}\biggl(\prod_{m=1}^{n+d-1}(\theta^{q^m}-t)^{c(m-n)q^{n+d-m}}\alpha^{q^{n+d}}\biggr)\\
 											  &=\sum_{n=0}^{\infty}\biggl(\prod_{l=1}^{d-1}(\theta^{q^{l+n}}-t)^{c(l)q^{d-l}}\alpha^{q^{n+d}}\biggr).\nonumber
\end{align}
Because $b_j>a_{j+1}$ for some $s+1>j>0$, there exists $d-1\geq l\geq 1$ such that $b_j>l\geq a_{j+1}$. Then $a_{u}-1\geq m \geq a_{u-1}$ for some $s+3>u\geq j+2$ and $b_v-1\geq l \geq b_{v-1}$ for some $j\geq v\geq 1$. Here, we assume that $b_0=1$, $a_0=b_{-1}=-\infty$ and $a_{s+2}=b_{s+1}=+\infty$ as in Proposition \ref{deformthgfexpn}. Thus, we obtain $c(l)<0$ for these $j$. Indeed, $c(l)=a(l)-b(l)=(s-u+2)-(s-v+1)=v+1-u\leq j+1-u\leq -1$.
We decompose $_{s+1}\mathcal{F}_s(\alpha)^{q^d}$ as follows:
\begin{align*}
	( _{s+1}\mathcal{F}_s(\alpha))^{q^d}&=\sum_{n=N}^{\infty}\biggl(\prod_{l=1}^{d-1}(\theta^{q^{l+n}}-t)^{c(l)q^{d-l}}\alpha^{q^{n+d}}\biggr)
	+\sum_{n=0}^{N-1}\biggl(\prod_{l=1}^{d-1}(\theta^{q^{l+n}}-t)^{c(l)q^{d-l}}\alpha^{q^{n+d}}\biggr)\\
	&=\sum_{n=N}^{\infty}\biggl(\prod_{l=1}^{d-1}(\theta^{q^{l+n}}-t)^{c(l)q^{d-l}}\alpha^{q^{n+d}}\biggr)+\sum_{n=0}^{N-d}\biggl(\prod_{l=1}^{d-1}(\theta^{q^{l+n}}-t)^{c(l)q^{d-l}}\alpha^{q^{n+d}}\biggr)\\
	&\quad +\sum_{n=N+1-d}^{N-1}\biggl(\prod_{l=1}^{d-1}(\theta^{q^{l+n}}-t)^{c(l)q^{d-l}}\alpha^{q^{n+d}}\biggr)\\
	\intertext{then, we denote all $d-1\geq l\geq 1$ such that $c(l)<0$ by $l_i\ (i=1, \ldots, r)$ and decompose as}
	&\hspace{-2cm}=\sum_{n=N}^{\infty}\biggl(\prod_{l=1}^{d-1}(\theta^{q^{l+n}}-t)^{c(l)q^{d-l}}\alpha^{q^{n+d}}\biggr)+\sum_{n=0}^{N-d}\biggl(\prod_{l=1}^{d-1}(\theta^{q^{l+n}}-t)^{c(l)q^{d-l}}\alpha^{q^{n+d}}\biggr)\\
	&\hspace{-2cm}\quad +\sum_{\substack{n=N+1-d\\n\neq N-l_1, \ldots, N-l_r}}^{N-1}\biggl(\prod_{l=1}^{d-1}(\theta^{q^{l+n}}-t)^{c(l)q^{d-l}}\alpha^{q^{n+d}}\biggr)+\sum_{n= N-l_1, \ldots, N-l_r}\biggl(\prod_{l=1}^{d-1}(\theta^{q^{l+n}}-t)^{c(l)q^{d-l}}\alpha^{q^{n+d}}\biggr)
\end{align*}
Thus, \eqref{keyeqntr} can be rewritten as
\begin{align*}
&g_1(t)+g_2(t)\sum_{n=N}^{\infty}\biggl(\prod_{l=1}^{d-1}(\theta^{q^{l+n}}-t)^{c(l)q^{d-l}}\alpha^{q^{n+d}}\biggr)\\
	&\quad +g_2(t)\sum_{n=0}^{N-d}\biggl(\prod_{l=1}^{d-1}(\theta^{q^{l+n}}-t)^{c(l)q^{d-l}}\alpha^{q^{n+d}}\biggr)+g_2(t)\sum_{\substack{n=N+1-d\\n\neq N-l_1, \ldots, N-l_r}}^{N-1}\biggl(\prod_{l=1}^{d-1}(\theta^{q^{l+n}}-t)^{c(l)q^{d-l}}\alpha^{q^{n+d}}\biggr)\\
	&=g_2(t)\sum_{n= N-l_1, \ldots, N-l_r}\biggl(\prod_{l=1}^{d-1}(\theta^{q^{l+n}}-t)^{c(l)q^{d-l}}\alpha^{q^{n+d}}\biggr).
\end{align*}
At $t=\theta^{q^N}$, the left-hand side of the above equation is regular, while $\sum_{n= N-l_1, \ldots, N-l_r}\biggl(\prod_{l=1}^{d-1}(\theta^{q^{l+n}}-t)^{c(l)q^{d-l}}\alpha^{q^{n+d}}\biggr)$
(this sum is nonzero because the largest term with respect to $|\cdot|_{\infty}$ is $\prod_{l=1}^{d-1}(\theta^{q^{l+N-l_h}}-t)^{c(l)q^{d-l}}\alpha^{q^{N-l_h+d}}$ where $l_h=\min\{l_1, \ldots, l_r\}$
) on the right-hand side has a pole. Indeed, on the left-hand side, the 1st term $g_1(t)$ is a polynomial, the 2nd sum $\sum_{n=N}^{\infty}\biggl(\prod_{l=1}^{d-1}(\theta^{q^{l+n}}-t)^{c(l)q^{d-l}}\alpha^{q^{n+d}}\biggr)$ is $\bigl(~_{s+1}F_s(\alpha)^{q^d}\bigr)^{q^N}$ at $t=\theta^{q^N}$, and $c(l)$ for $l=N-n$ are not negative in the 3rd sum $\sum_{n=0}^{N-d}\biggl(\prod_{l=1}^{d-1}(\theta^{q^{l+n}}-t)^{c(l)q^{d-l}}\alpha^{q^{n+d}}\biggr)$ and 4th sum $\sum_{\substack{n=N+1-d\\n\neq N-l_1, \ldots, N-l_r}}^{N-1}\biggl(\prod_{l=1}^{d-1}(\theta^{q^{l+n}}-t)^{c(l)q^{d-l}}\alpha^{q^{n+d}}\biggr)$.
Thus, $g_2(t)$ should have a zero at $t=\theta^{q^N}$, and we obtain a contradiction.

Next, we prove that $_{s+1}F_s(\alpha)$ is algebraic when $b_j\leq a_{j+1}$ for all $j$. Due to the former part of the proof, in this case, $c(l)\geq 0$ for any $l$. Indeed, if there exists $i>0$ such that $c(i)<0$, we obtain $a(i)-b(i)=(s+1-u+1)-(s-v+1)=1-u+v<0$ for some $1\leq u\leq s+1$, $1\leq v\leq s$. Then, we have $b_{v-1}\leq i\leq b_{v}-1$ and $a_{u-1}\leq i\leq a_{u}-1$. However, this contradicts $1-u+v<0$, that is, $b_v\leq a_{u-1}$.

By using the expression \eqref{deformhgfsimple},
we have 
\begin{align*}
	\Bigl(~_{s+1}\mathcal{F}_s({\bf a};{\bf b})(\alpha)^{q^{d}}\Bigr)^{(-1)}&=\Bigl(\sum_{n\geq 0}\prod^{d-1}_{l=1}(\theta^{q^{n+d}}-t)^{c(l)q^{d-l}}\alpha^{q^{n+d}}\Bigr)^{(-1)}\\
	&=\prod_{l=1}^{d-1}(\theta^{q^{d-1}}-t)^{c(l)q^{d-l}}\alpha^{q^{d-1}}+~_{s+1}\mathcal{F}_s({\bf a};{\bf b})(\alpha)^{q^{d-1}}.
\end{align*}
Thus, by setting $\Phi_{{\bf a, b}, d}=\begin{pmatrix}
    1 & 0 \\
    \prod_{l=1}^{d-1}(\theta^{q^{d-1}}-t)^{c(l)q^{d-l}}\alpha^{q^{d-1}} & 1
    \end{pmatrix}\in\Mat_2(\overline{k}[t])
$ and $
	\psi=
    \begin{pmatrix}
    1  \\
    ~_{s+1}\mathcal{F}_s({\bf a};{\bf b})(\alpha)^{q^{d}} 
    \end{pmatrix}\in\Mat_{2\times 1}(\mathbb{T}),
$ we have $\psi^{(-1)}=\Phi_{{\bf a, b},d}\psi$ and then, $\psi\in\Mat_{2\times 1}(\mathbb{E})$ by \cite[Proposition 3.1.3]{ABP04}. 
This allows us to apply Theorem \ref{changcri} without $P_{{\bf b},d}$.

By expanding $\prod^{d-1}_{j=1}(\theta^{q^{n+d}}-t^{q^{d-j}})^{c(j)}\alpha^{q^{n+d}}$, we obtain the finite $\mathbb{F}_q[t]$-linear combination $\sum_{H\geq h\geq 1}f_h(t)\theta^{hq^{n+d}}\alpha^{q^{n+d}}$ with $f_h(t)\in\mathbb{F}_q[t]$, and thus we can write
\begin{align}\label{keyeqnalg1}
_{s+1}\mathcal{F}_s(\alpha)^{q^d}=\sum_{n\geq 0}\sum_{H\geq h\geq 1}f_h(t)\theta^{hq^{n+d}}\alpha^{q^{n+d}}=\sum_{H\geq h\geq 1}f_h(t)\sum_{n\geq 0}\theta^{hq^{n+d}}\alpha^{q^{n+d}}.
\end{align}

Then we have the algebraic relation $(\sum_{n\geq 0}\theta^{hq^{n+d}}\alpha^{q^{n+d}})^q=\sum_{n\geq 0}\theta^{hq^{n+d}}\alpha^{q^{n+d}}-\theta^{hq^d}\alpha^{q^d}$ which implies that the sum $\sum_{n\geq 0}\theta^{hq^{n+d}}\alpha^{q^{n+d}}$ is in $\overline{k}$. Thus the expression \eqref{keyeqnalg1} shows that $_{s+1}\mathcal{F}_s(\alpha)^{q^d}\in\overline{k}[t]$ and Theorem \ref{changcri} (2) yields
\[
	0={\rm tr.deg}_{\overline{k}(t)}\overline{k}(t)\bigl\{1,\ _{s+1}\mathcal{F}_s({\bf a};{\bf b})(\alpha)^{q^{d}} \bigr\}={\rm tr.deg}_{\overline{k}}\overline{k}\bigl\{1,\ _{s+1}F_s({\bf a};{\bf b})(\alpha)^{q^{d}} \bigr\}.
\]
Therefore, $_{s+1}F_s({\bf a};{\bf b})(\alpha)$ is algebraic over $k$.
\end{proof}

In the following, we denote all $m$ satisfying $d_s\geq m\geq 0$ by $m_i\ (i=1, \ldots, n)$ where $d_h=\max_{\substack{ 1\leq i\leq h+1\\ 1\leq j\leq h}}\{a_i, b_j\}$ for $h=1, \ldots, s$.
\begin{thm}\label{lindepdifferentn}
For any ${\bf a}_s=(a_{1}, \ldots, a_{s+1})\in\mathbb{Z}_{>0}^{s+1}$, ${\bf b}_s=(b_{1}, \ldots, b_{s})\in\mathbb{Z}_{>0}^{s}$ such that $b_1>a_{s+1}$, we take $n\geq r\geq 1$ such that $b_1-1\geq m_r$. Let ${\bf a}_h=(a_{1}, \ldots, a_{h+1})$, ${\bf b}_h=(b_{1}, \ldots, b_{h})\ (h=1, \ldots, s)$ and $\alpha_h\in\overline{k}^{\times}$ with $|\alpha_h|_{\infty}<q^{\sum_{j=1}^h(b_j-1)-\sum_{i=1}^{h+1}(a_i-1)}$. If $\min_{1\leq i\leq n, i\neq r}\{c(m_i)q^{d-m_i}\}>c(m_r)q^{d-m_r}$, then $_{h+1}F_{h}({\bf a}_h; {\bf b}_h)(\alpha_h)\ (1\leq h\leq s)$ are $\overline{k}$-linearly independent.
\end{thm}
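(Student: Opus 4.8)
The plan is to argue by contradiction, reproducing the mechanism of the proof of Theorem~\ref{transthm} but now for the whole family $\{{}_{h+1}F_h({\bf a}_h;{\bf b}_h)(\alpha_h)\}_{1\le h\le s}$ simultaneously. Set $d=d_s$ and $\psi_h:={}_{h+1}\mathcal{F}_h({\bf a}_h;{\bf b}_h)(\alpha_h)^{q^{d}}$; each $\psi_h$ is a nonzero element of $\mathbb{T}$ (cf.\ \eqref{tatehgf}) whose $(\theta^{q^i}-t)$-expansion is furnished by \eqref{expansionhgf}. Following Theorem~\ref{periodhgs} and the lower-triangular construction used for $\Phi_{\mathfrak{s},\boldsymbol{\alpha}}$, I would multiply by a single entire scaling factor $\psi_0$, a suitable product of the series $P_{{\bf b}_h,d}$, chosen so that the column vector
\[
 \psi:=(\psi_0,\ \psi_0\psi_1,\ \dots,\ \psi_0\psi_s)^{\mathrm{tr}}
\]
satisfies $\psi^{(-1)}=\Phi\psi$ for some $\Phi\in\Mat_{s+1}(\overline{k}[t])$; by \cite[Proposition~3.1.3]{ABP04} one then has $\psi\in\Mat_{(s+1)\times1}(\mathbb{E})$. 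One verifies $\det\Phi(0)\neq0$ and, with $\xi=\theta$, that $\det\Phi(\theta^{q^{-i}})\neq0$ for all $i\ge1$, since every factor of $\det\Phi$ has the form $(\theta-\theta^{q^{-i}})$ or $(\theta^{q^j}-\theta^{q^{-i}})$ with $j\ge1$ and is hence nonzero.

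Next I would lift a hypothetical relation. Because $x\mapsto x^{q^d}$ is a bijection of $\overline{k}$, a nontrivial $\overline{k}$-linear relation $\sum_{h=1}^s c_h\,{}_{h+1}F_h({\bf a}_h;{\bf b}_h)(\alpha_h)=0$ is equivalent to $\sum_h c_h^{q^d}\psi_h(\theta)=0$, and since $\psi_0(\theta)\neq0$ this reads $\rho\psi(\theta)=0$ with $\rho=(0,c_1^{q^d},\dots,c_s^{q^d})\in\Mat_{1\times(s+1)}(\overline{k})\setminus\{0\}$. Theorem~\ref{changcri}(1) with $\xi=\theta$ then yields $P=(P_0,\dots,P_s)\in\Mat_{1\times(s+1)}(\overline{k}[t])$ with $P(\theta)=\rho$ and $P\psi=0$. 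As $\psi_0$ is a common nonzero factor, this collapses to the functional identity
\[
 P_0(t)+\sum_{h=1}^s P_h(t)\,\psi_h=0 .
\]

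The core is the pole analysis at $t=\theta^{q^N}$, exactly the non-degeneracy discussed in Remark~\ref{cassum}. Let $H$ be the largest index with $P_H\neq0$. Expanding each $\psi_h$ by \eqref{expansionhgf} and decomposing the sum as in \eqref{cassumex}, only the summands with $N-d+1\le n\le N-1$ contribute a pole or zero at $t=\theta^{q^N}$, of order $c_h(l)q^{d-l}$ where $l=N-n$. The condition $b_1>a_{s+1}$ forces $c_h(l)<0$ in the range around the chosen $m_r\le b_1-1$, and one checks the monotonicity $c_{h+1}(l)\le c_h(l)$ there, so the most negative orders are produced by the top function $h=H$. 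The hypothesis $\min_{1\le i\le n,\,i\neq r}\{c(m_i)q^{d-m_i}\}>c(m_r)q^{d-m_r}$ guarantees that the order $c_H(m_r)q^{d-m_r}$ is strictly the most negative among all contributing $(h,l)$ and is attained by a single summand. Choosing $N$ large with $P_H(\theta^{q^N})\neq0$, multiplying the identity by $(\theta^{q^N}-t)^{-c_H(m_r)q^{d-m_r}}$, and specializing at $t=\theta^{q^N}$ then annihilates every term except this one, whose value is a nonzero element of $\overline{k}$; this gives $0\neq0$, a contradiction, and proves the asserted linear independence.

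The step I expect to be the main obstacle is precisely the uniqueness of the dominant pole: one must rule out that some other truncation $h<H$ together with a shift $l\neq m_r$ produces a pole at $t=\theta^{q^N}$ of the same maximal order, which would permit cancellation. This is the phenomenon isolated in Remark~\ref{cassum}, and it is resolved only by combining the strict-minimum hypothesis on $c(m_r)q^{d-m_r}$ with the monotonicity of $c_h(l)$ in $h$; keeping careful track of both, across all truncations at once, is the delicate point of the argument.
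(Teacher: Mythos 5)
Your proposal follows essentially the same route as the paper's proof: lift the hypothetical relation via Theorem \ref{changcri} using the period interpretation of Theorem \ref{periodhgs} (your single $(s+1)\times(s+1)$ system with common factor $P_{{\bf b}_s,d_s}$ is equivalent to the paper's block-diagonal one built from $2\times 2$ blocks), then isolate a unique dominant pole at $t=\theta^{q^N}$ via the expansion \eqref{expansionhgf} and the strict minimality of $c(m_r)q^{d-m_r}$. The only real difference is how the cross-truncation comparison is phrased --- you invoke the monotonicity $c_{h+1}(l)\le c_h(l)$ where the paper computes $c_h(m_r)=-h$ explicitly; both rest on the sorting convention together with $b_1>a_{s+1}$, and both (yours via taking $H=s$, the paper's via choosing $N$ with $g_s(\theta^{q^N})\neq 0$) implicitly assume the top coefficient does not vanish identically.
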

\begin{proof}
Conversely, we assume that $f_0+{f_1}~_{2}F_{1}({\bf a}_1; {\bf b}_1)(\alpha_1)^{q^{d_s}}+\cdots +{f_s}~_{s+1}F_{s}({\bf a}_s; {\bf b}_s)(\alpha_s)^{q^{d_s}}=0$ for some $f_i\in\overline{k}\ (i=0, 1, \ldots, s)$ are not zero.
We consider
$
\Phi=\begin{pmatrix}  \Phi_1 &         &        &  \\
			                 & \Phi_2  &        &  \\ 
	   						 &         & \ddots &  \\
     						 &		   &		& \Phi_s
     \end{pmatrix}
$
and 
$\psi=\begin{pmatrix} \psi_1 \\
					  \psi_2\\
					  \vdots \\
					  \psi_s
		\end{pmatrix}
$
where $\Phi_h=\prod_{\substack{j=h+1}}^s(\theta-t)^{q^{d_s-1}}\mathbb{D}_{b_j-2}^{q^{d_s-b_j}}\Phi_{{\bf a}_h, {\bf b}_h, d_s}$
and $\psi_{h}=
\begin{pmatrix} P_{{\bf b}_s, d_s} \\
				P_{{\bf b}_s, d_s}~_{h+1}\mathcal{F}_{h}({\bf a}_h; {\bf b}_h)(\alpha_h)^{q^{d_s}}
\end{pmatrix}\in\Mat_{2\times 1}(\mathbb{E}).
$
  According to Theorem \ref{periodhgs}, $\psi_{h}^{(-1)}=\Phi_{h}\psi_{h}$ is true for each $h$; thus we have $\psi^{(-1)}=\Phi\psi$.
Then, by using Theorem \ref{changcri}, we have the following $\overline{k}[t]$-linear relation for $g_i(t)\in\overline{k}[t]$ such that $g_i(\theta)=f_i\quad (i=1, \ldots, s)$:
\begin{align}\label{keyhgfgon}
	P_{{\bf b}_s, d_s}{g_0(t)}+P_{{\bf b}_s, d_s}{g_1(t)}~_{2}\mathcal{F}_{1}({\bf a}_1; {\bf b}_1)(\alpha_1)^{q^d}+\cdots +P_{d_s, {\bf b}_s}{g_s(t)}~_{s+1}\mathcal{F}_{s}({\bf a}_s; {\bf b}_s)(\alpha_s)^{q^d}=0.
\end{align}
We can rewrite the above as follows by using \eqref{expansionhgf}:
\begin{align*}
	P_{{\bf b}_s, d_s}\biggl\{g_0(t)+g_1(t)\biggl(\sum_{n=0}^{\infty}\Bigl(\prod_{l=1}^{d_1-1}(\theta^{q^{l+n}}-t)^{c_1(l)q^{d-l}}&\alpha_1^{q^{n+d}}\Bigr)\biggr)+\\
	&\cdots+g_s(t)\biggl(\sum_{n=0}^{\infty}\Bigl(\prod_{l=1}^{d_s-1}(\theta^{q^{l+n}}-t)^{c_s(l)q^{d-l}}\alpha_s^{q^{n+d}}\Bigr)\biggr)\biggr\}=0. 
\end{align*}
Here each $c_h(\cdot)$ is associated to $_{h+1}\mathcal{F}_{h}({\bf a}_h; {\bf b}_h)(\alpha_h)$ and thus $c_s(\cdot)$ is nothing but $c(\cdot)$.

We note that $c_h(m_r)=-h$ for each $h=1, \ldots, s$. Indeed, $b_1-1\geq m_r\geq a_{s+1}$ implies $b_1-1\geq m_r\geq a_{h+1}$ and then, $c_h(m_r)=a_h(m_r
)-b_h(m_r)=h+1-(h+2)+1-(h-1+1)=-h$. 

We set $N$ to be a positive integer such that $g_s(\theta^{q^N})\neq 0$. Then by multiplying $\bigl(P_{{\bf b}_s, d_s}(\theta^{q^N}-t)^{c_s(m_a)q^{d-m_a}}\bigr)^{-1}$ on both sides of \eqref{keyhgfgon} and by the condition $c(m_i)q^{d-m_i}>c(m_r)q^{d-m_r}$ for $i\neq r$, we have
\begin{align*}
	&(\theta^{q^N}-t)^{-c_s(m_r)q^{d-m_r}}g_0(t)+(\theta^{q^N}-t)^{-c_s(m_a)q^{d-m_r}}g_1(t)\biggl(\sum_{n=0}^{\infty}\biggl(\prod_{l=1}^{d_1-1}(\theta^{q^{l+n}}-t)^{c_1(l)q^{d-l}}\alpha_1^{q^{n+d}}\biggr)\biggr)+\\
	&\quad\cdots+(\theta^{q^N}-t)^{-c_s(m_r)q^{d-m_r}}g_{s-1}(t)\biggl(\sum_{n=0}^{\infty}\biggl(\prod_{l=1}^{d_1-1}(\theta^{q^{l+n}}-t)^{c_{s-1}(l)q^{d-l}}\alpha_1^{q^{n+d}}\biggr)\biggr)\\
	&\qquad +(\theta^{q^N}-t)^{-c_s(m_r)q^{d-m_r}}g_s(t)\biggl(\sum_{\substack{n=0\\n\neq N-m_r}}^{\infty}\biggl(\prod_{\substack{l=1}}^{d_s-1}(\theta^{q^{l+n}}-t)^{c_s(l)q^{d-l}}\alpha_s^{q^{n+d}}\biggr)\biggr)\\
	&\qquad +g_s(t)\prod_{\substack{l=1\\l\neq m_j}}^{d_s-1}(\theta^{q^{l+N-m_r}}-t)^{c_s(l)q^{d-l}}\alpha_s^{q^{N-m_r+d}}=0. 
\end{align*}

By substituting $t=\theta^{q^N}$ into the above equation, we obtain $g_s(\theta^{q^N})\alpha_1^{q^N}=0$. Therefore, we obtain a contradiction $\alpha_1\neq 0$ and then, the desired result holds.
\end{proof}

\begin{rem}\label{kplgon}
For a given $s>0$, if we specialize ${\bf a}_s=(1, \ldots, 1)\in\mathbb{Z}_{>0}^{s+1}$ and ${\bf b}_s=(2, \ldots, 2)\in\mathbb{Z}_{>0}^{s}$ in the above theorem, it shows that $1, Li_{K, s}(\alpha_s), Li_{K, s-1}(\alpha_{s-1}), \ldots, Li_{K, 1}(\alpha_1)$ are $\overline{k}$-linearly independent. Accordingly, there are no $\overline{k}$-linear relations among 1 and the KPLs with different weights. 
\end{rem}

In the following, we denote all $m$ satisfying $d\geq m\geq 0$ by $m_i\ (i=1, \ldots, n)$.
\begin{thm}\label{lindepdiffalpha}
 For any ${\bf a}=(a_{1}, \ldots a_{s+1})\in\mathbb{Z}_{>0}^{s+1}$ and ${\bf b}=(b_{1}, \ldots b_{s})\in\mathbb{Z}_{>0}^{s}$ such that $b_j>a_{j+1}$ for some $j$ and $\min_{1\leq i\leq n, i\neq u}\{c(m_i)q^{d-m_i}\}>c(m_u)q^{d-m_u}$ for some $u$, let $\alpha_i\in\overline{k}^{\times}\ (i=1,\ldots, r)$ with $|\alpha_i|_{\infty}<q^{\sum_{j=1}^s(b_j-1)-\sum_{i=1}^{s+1}(a_i-1)}$.
If $\alpha_1 \ldots, \alpha_r$ are $k$-linearly independent, then $_{s+1}F_{s}({\bf a}; {\bf b})(\alpha_1), \ldots, _{s+1}F_{s}({\bf a}; {\bf b})(\alpha_r)$ are $\overline{k}$-linearly independent.
\end{thm}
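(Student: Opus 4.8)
The plan is to follow the contradiction scheme used for Theorems \ref{transthm} and \ref{lindepdifferentn}. Assume that ${}_{s+1}F_{s}({\bf a};{\bf b})(\alpha_1),\dots,{}_{s+1}F_{s}({\bf a};{\bf b})(\alpha_r)$ are $\overline{k}$-linearly dependent; since $\overline{k}$ is perfect, raising a dependence to the $q^{d}$-th power gives a nontrivial relation
\begin{align*}
\sum_{i=1}^{r}f_{i}\,{}_{s+1}F_{s}({\bf a};{\bf b})(\alpha_i)^{q^{d}}=0,\qquad f_{i}\in\overline{k}\ \text{not all zero}.
\end{align*}
First I would build the block-diagonal pre-$t$-motive whose $i$-th diagonal block is the matrix $\Phi_{{\bf a},{\bf b},d}$ of \eqref{hgfmotive} for the common parameters ${\bf a},{\bf b}$ but the point $\alpha_i$, and let $\psi$ be the stacked column whose $i$-th pair of entries is $\bigl(P_{{\bf b},d},\,P_{{\bf b},d}\,{}_{s+1}\mathcal{F}_{s}({\bf a};{\bf b})(\alpha_i)^{q^{d}}\bigr)^{tr}$. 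By Theorem \ref{periodhgs} each block satisfies the required Frobenius relation, so $\psi^{(-1)}=\Phi\psi$, the entries of $\psi$ lie in $\mathbb{E}$, and $\det\Phi_{{\bf a},{\bf b},d}(0)\neq 0$.

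Next I would apply Chang's refined criterion (Theorem \ref{changcri}) at $\xi=\theta$ to lift the numerical relation to one over $\overline{k}[t]$; cancelling the common factor $P_{{\bf b},d}$ it becomes
\begin{align*}
g_{0}(t)+\sum_{i=1}^{r}g_{i}(t)\,{}_{s+1}\mathcal{F}_{s}({\bf a};{\bf b})(\alpha_i)^{q^{d}}=0,\qquad g_{i}\in\overline{k}[t],\ g_{i}(\theta)=f_{i}.
\end{align*}
Because the parameters are shared, every summand carries the \emph{same} function $c(\cdot)$. Using the $(\theta^{q^{m}}-t)$-expansion of Proposition \ref{deformthgfexpn} and Remark \ref{cvanish}, I would study the relation near $t=\theta^{q^{N}}$ for large $N$. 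The hypothesis $\min_{i\neq u}\{c(m_i)q^{d-m_i}\}>c(m_u)q^{d-m_u}$ makes $m_u$ the unique source of the most negative exponent, so only the terms with $n=N-m_u$ produce the dominant pole $(\theta^{q^{N}}-t)^{c(m_u)q^{d-m_u}}$. Multiplying by $(\theta^{q^{N}}-t)^{-c(m_u)q^{d-m_u}}$ and setting $t=\theta^{q^{N}}$ annihilates all other contributions and leaves
\begin{align*}
A_{N}\sum_{i=1}^{r}g_{i}(\theta^{q^{N}})\,\alpha_i^{q^{N-m_u+d}}=0,\qquad A_{N}=\prod_{l\neq m_u}\bigl(\theta^{q^{l+N-m_u}}-\theta^{q^{N}}\bigr)^{c(l)q^{d-l}}\neq0.
\end{align*}

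The crux, and the point where this differs from Theorem \ref{lindepdifferentn}, is to turn $\sum_{i}g_{i}(\theta^{q^{N}})\alpha_i^{q^{N-m_u+d}}=0$ into a genuine $k$-linear dependence of $\alpha_1,\dots,\alpha_r$: since all $r$ values share ${\bf a},{\bf b}$, the dominant pole isolates the whole combination rather than a single value, so nonvanishing of one $\alpha_i$ no longer suffices. To control the $g_{i}$ I would take the lifted relation with the fewest nonzero $g_{i}$ and apply the Frobenius twist, using $\bigl({}_{s+1}\mathcal{F}_{s}(\alpha_i)^{q^{d}}\bigr)^{(-1)}={}_{s+1}\mathcal{F}_{s}(\alpha_i)^{q^{d}}+c_{0}(t)\alpha_i^{q^{d-1}}$ with the \emph{same} $c_{0}(t)=\prod_{l=1}^{d-1}(\theta^{q^{l-1}}-t)^{c(l)q^{d-l}}$ for every $i$ (this is the computation in the proof of Theorem \ref{transthm}). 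Subtracting the twisted relation from a scalar multiple of the original eliminates one term, and minimality forces the ratios $g_{i}/g_{i_{1}}$ to be $\sigma$-invariant, hence to lie in $\mathbb{F}_q(t)$. Writing $g_{i}=r_{i}g_{i_1}$ with $r_{i}\in\mathbb{F}_q(t)$, one has $r_{i}(\theta^{q^{N}})=r_{i}(\theta)^{q^{N}}\in k$, so the surviving combination collapses under Frobenius to $\sum_{i}r_{i}(\theta)\,\alpha_i^{q^{d-m_u}}=0$; and were one to establish the identity $h:=\sum_{i}r_{i}(t)\alpha_i^{q^{d-1}}=0$, reading off a single $t$-coefficient would give a nontrivial $\mathbb{F}_q$-linear, hence $k$-linear, relation among the $\alpha_i$, the desired contradiction.

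The hard part is exactly this last descent. The pole analysis only produces relations among the Frobenius powers $\alpha_i^{q^{d-m_u}}$ (equivalently a $k^{1/q^{d-m_u}}$-relation among the $\alpha_i$), which is strictly weaker than a $k$-relation among the $\alpha_i$ themselves; closing the gap amounts to showing that the difference equation $\mathcal{Y}^{(-1)}-\mathcal{Y}=c_{0}(t)\,h$ for $\mathcal{Y}:=\sum_{i}r_{i}\,{}_{s+1}\mathcal{F}_{s}(\alpha_i)^{q^{d}}\in\overline{k}(t)$ forces $h=0$. I expect this to require a careful comparison of the poles of $\mathcal{Y}^{(-1)}-\mathcal{Y}$ (located at the twists $\beta^{1/q}$ of the poles $\beta$ of $\mathcal{Y}$) with those of $c_{0}h$ (concentrated at the fixed points $\theta^{q^{l-1}}$ with $c(l)<0$), using that a pole of $\mathcal{Y}$ of maximal absolute value cannot be cancelled by $\mathcal{Y}^{(-1)}$ and must therefore appear on the right-hand side. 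Making this quantitative so as to force $h=0$ is where the main difficulty lies.
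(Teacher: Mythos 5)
Your setup matches the paper's almost exactly: the same contradiction scheme, the same lift via Theorem \ref{changcri} (the paper packages the $r$ values into a single $(r+1)\times(r+1)$ matrix whose first column carries the $\alpha_i$'s, rather than your block-diagonal sum of copies of $\Phi_{{\bf a},{\bf b},d}$ from \eqref{hgfmotive}, but this changes nothing), the same twist-and-subtract device to force the coefficient ratios into $\mathbb{F}_q(t)$, and the same extraction of the dominant pole at $t=\theta^{q^N}$ using the hypothesis $\min_{i\neq u}\{c(m_i)q^{d-m_i}\}>c(m_u)q^{d-m_u}$. The paper runs the twist-and-subtract as an explicit iteration with remainder terms $R_j$, splitting into the case where only one $\mathcal{F}$-term survives (contradiction with $\alpha_1\neq 0$) and the case where all coefficients $h_{i,j'}$ vanish simultaneously, whence $h_{i,j'-1}/h_{r-j'+1,j'-1}\in\mathbb{F}_q(t)$; your minimal-relation formulation is an equivalent repackaging of these two cases.

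The genuine gap is that you stop before the conclusion. At the point where you have $\sum_i r_i(\theta)^{q^N}\alpha_i^{q^{N-m_u+d}}=0$ with $r_i\in\mathbb{F}_q(t)$, the paper simply takes the $q^{n+d}$-th root of \eqref{lastkey} and declares the result a $k$-linear relation among $\alpha_1,\dots,\alpha_r$, contradicting the hypothesis; there is no detour through the difference equation $\mathcal{Y}^{(-1)}-\mathcal{Y}=c_0(t)h$ that you sketch and do not carry out. So relative to the paper your proof is unfinished: the last third of your write-up is a plan for a lemma you have not proved, and you explicitly flag it as the main difficulty. That said, your reason for hesitating is a substantive observation rather than a confusion: taking the root only yields $\sum_i r_i(\theta)\,\alpha_i^{q^{d-m_u}}=0$, i.e.\ a $k$-dependence of the powers $\alpha_i^{q^{d-m_u}}$, equivalently a $k^{1/q^{d-m_u}}$-dependence of the $\alpha_i$, and (as $\alpha_1=1$, $\alpha_2=\theta^{1/q}$ shows) this is in general strictly weaker than $k$-dependence of the $\alpha_i$ themselves; note also that the stated hypotheses force $c(m_u)<0$, hence $m_u<d$, so the exponent $d-m_u$ is genuinely positive. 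The paper dispatches this point in one sentence. As written, your proposal does not prove the theorem; to finish you must either justify that final descent or supply the missing inseparability argument.
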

\begin{proof}
We assume on the contrary that there exists a nontrivial $\overline{k}$-linear relation:
$$
	{f_1}~_{s+1}F_{s}({\bf a}; {\bf b})(\alpha_1)^{q^d}+\cdots+{f_r}~_{s+1}F_{s}({\bf a}; {\bf b})(\alpha_r)^{q^d}=0.		
$$
We define the matrices $\Phi$ and $\psi$ as
$$
\Phi=\begin{pmatrix}
      \prod_{j=1}^s(\theta-t)^{q^{d-1}}\mathbb{D}_{b_j-2}^{q^{d-b_j}} &  & & \\
      \prod_{j=1}^{s+1}(\theta-t)^{q^{d-1}}\mathbb{D}_{a_i-2}^{q^{d-a_j}}\alpha_1^{q^{d-2}} & \prod_{j=1}^s(\theta-t)^{q^{d-1}}\mathbb{D}_{b_j-2}^{q^{d-b_j}} &  & \\
      \vdots  &  & \ddots & \\
      \prod_{\substack{j=1}}^{s+1}(\theta-t)^{q^{d-1}}\mathbb{D}_{a_j-2}^{q^{d-a_j}}\alpha_r^{q^{d-2}} & & & \prod_{j=1}^s(\theta-t)^{q^{d-1}}\mathbb{D}_{b_j-2}^{q^{d-b_j}}
     \end{pmatrix}\in\Mat_{r+1}(\overline{k}[t])
$$
and $
\psi=\begin{pmatrix}
    P_{{\bf b}, d}  \\
    P_{{\bf b},d}~_{s+1}\mathcal{F}_{s}({\bf a}; {\bf b})(\alpha_1)^{q^d}  \\
     \vdots  \\
    P_{{\bf b},d}~_{s+1}\mathcal{F}_{s}({\bf a}; {\bf b})(\alpha_r)^{q^d}
     \end{pmatrix}\in\Mat_{(r+1)\times 1}(\mathbb{E}).
$
According to Theorem \ref{periodhgs}, $\psi^{(-1)}=\Phi\psi$; then, we can apply Theorem \ref{changcri} and obtain the following:
\begin{align}\label{keyeqndiffalpha}
	g_1(t)~_{s+1}\mathcal{F}_{s}({\bf a}; {\bf b})(\alpha_1)^{q^d}+\cdots+g_r(t)~_{s+1}\mathcal{F}_{s}({\bf a}; {\bf b})(\alpha_r)^{q^d}=0
\end{align}
where $g_i(t)\in\overline{k}[t]$ with $g_i(\theta)=f_i$. We assume $g_r(t)\neq 0$ without loss of generality and set $g_i'(t)=g_i(t)/g_r(t)$. Next, we transform \eqref{keyeqndiffalpha}. We divide both sides of \eqref{keyeqndiffalpha} by $g_r(t)$ and obtain
\begin{align}\label{keyeqndiffalpha1}
	g_1'(t)~_{s+1}\mathcal{F}_{s}({\bf a}; {\bf b})(\alpha_1)^{q^d}+\cdots+g_r'(t)~_{s+1}\mathcal{F}_{s}({\bf a}; {\bf b})(\alpha_r)^{q^d}=0
\end{align}
Then, based on Theorem \ref{frobhgf} and \eqref{deformhgfsimple}, the $(-1)$-fold Frobenius twist of \eqref{keyeqndiffalpha1} is
\begin{align}\label{keyeqndiffalpha2}
	&g_1'(t)^{(-1)}~_{s+1}\mathcal{F}_{s}({\bf a}; {\bf b})(\alpha_1)^{q^d}+\cdots+g_r'(t)^{(-1)}~_{s+1}\mathcal{F}_{s}({\bf a}; {\bf b})(\alpha_r)^{q^d}\\
	&\quad +g_1'(t)^{(-1)}\prod_{j=1}^{d-1}(\theta^{q^j}-t)^{c(j)q^{d-j}}\alpha_1^{q^{d}}+\cdots+g_{r}'(t)^{(-1)}\prod_{j=1}^{d-1}(\theta^{q^j}-t)^{c(j)q^{d-j}}\alpha_r^{q^{d}}=0\nonumber
\end{align}
According to \eqref{keyeqndiffalpha1} and \eqref{keyeqndiffalpha2}, we have the following:
\begin{align*}
	&h_1(t)~_{s+1}\mathcal{F}_{s}({\bf a}; {\bf b})(\alpha_1)^{q^d}+\cdots+h_{r-1}(t)~_{s+1}\mathcal{F}_{s}({\bf a}; {\bf b})(\alpha_{r-1})^{q^d}+R=0
\end{align*}
where
\begin{align*}
&h_i(t):=g_i'(t)-g_i'(t)^{(-1)}\ \text{and}\\ 
&R:=-\prod_{m=1}^{d-1}(\theta^{q^m}-t)^{c(m)q^{d-m}}\bigl(g_1'(t)^{(-1)}\alpha_1^{q^{d}}+\cdots+g_{r-1}'(t)^{(-1)}\alpha_{r-1}^{q^{d}}+\alpha_r^{q^{d}}\bigr)
\end{align*}
The $j$-th repetition of this transformation gives the following equation:
\begin{align*}
	h_{1,j}(t)~_{s+1}\mathcal{F}_{s}({\bf a}; {\bf b})(\alpha_1)^{q^d}+\cdots+h_{r-j, j}(t)~_{s+1}\mathcal{F}_{s}({\bf a}; {\bf b})(\alpha_{r-j})^{q^d}+R_j=0
\end{align*}
where
$$
	h_{i.j+1}(t)=\frac{h_{i, j}(t)}{h_{r-j, j}(t)}-\Bigl(\frac{h_{i, j}(t)}{h_{r-j, j}(t)}\Bigr)^{(-1)}\ \text{and}\ R_{j+1}=\frac{R_j}{h_{r-j, j}(t)}-\Bigl(\frac{R_j}{h_{r-j, j}(t)}\Bigr)^{(-1)}
$$ with $h_{i, 1}(t)=h_{i}(t)$, $R_1=R$. We repeat the above until (i) $j=r-1$ or (ii) $j$ is equal to some $j'$ such that $h_{1, j'}=\cdots=h_{r-j', j'}=0$. For the case (i), we have 
\begin{align}\label{case1keyrel}
h_{1,r-1}(t)~_{s+1}\mathcal{F}_{s}({\bf a}; {\bf b})(\alpha_1)^{q^d}+R_{r-1}=0.
\end{align}
 We set $N>0$ such that $h_{1,r-1}(t)$ is nonzero and $R_{r-1}$ is regular at $t=\theta^{q^N}$. After multiplying by $(\theta-t)^{-c(m_u)q^{q^{d-m_u}}}$ and substituting $t=\theta^{q^N}$ on both sides of \eqref{case1keyrel}, we obtain $h_{1,r-1}(\theta^{q^N})\prod_{\substack{d-1\geq j\geq 1\\ j\neq m_u}}(\theta^{N-m_a+j}-\theta^{q^N})^{c(j)q^{d-j}}\alpha_1^{q^{N-m_u+d}}=0$. This contradicts $\alpha_1\neq 0$. For the case (ii), due to our assumption of the minimality of $c(m_u)q^{d-m_u}$, multiplying by $(\theta^{q^N}-t)^{c(m_u)q^{d-m_u}}$ on both sides of
\begin{align*}
	h_{1,j'-1}(t)~_{s+1}\mathcal{F}_{s}({\bf a}; {\bf b})(\alpha_1)^{q^d}+\cdots+h_{r-j'+1,j'-1}(t)~_{s+1}\mathcal{F}_{s}({\bf a}; {\bf b})(\alpha_{r-j'+1})^{q^d}+R_{j'-1}=0
\end{align*}
gives
\begin{align}\label{lastkey}
	h_{1,j'-1}(\theta^{q^N})\biggl(\prod_{\substack{l=1\\l\neq m_a}}^{d-1}(\theta^{q^{l+N-m_u}}-&\theta^{q^N})^{c(l)q^{d-l}}\alpha_1^{q^{n+d}}\biggr)+\cdots\\
	& \cdots+h_{r-j'+1,j'-1}(\theta^{q^N})\biggl(\prod_{\substack{l=1\\l\neq m_u}}^{d-1}(\theta^{q^{l+N-m_u}}-\theta^{q^N})^{c(l)q^{d-l}}\alpha_{r-j'+1}^{q^{n+d}}\biggr)=0. \nonumber
\end{align}
Because $h_{1, j'}=\cdots=h_{r-j', j'}=0$, we obtain $h_{i, j'-1}(t)/h_{r-j'+1, j'-1}(t)\in \mathbb{F}_q(t)$. By taking $q^{n+d}$ th root, \eqref{lastkey} becomes a $k$-linear relation between $\alpha_1, \ldots, \alpha_r$, and we obtain a contradiction.
\end{proof}

\begin{rem}
When ${\bf a}=(1, \ldots, 1)$ and ${\bf b}=(2, \ldots, 2)$, the above theorem shows that if $\alpha_1, \ldots, \alpha_r$ are $k$-linearly independent, $Li_{K, s}(\alpha_1), \ldots, Li_{K, s}(\alpha_r)$ are $\overline{k}$-linearly independent.  
\end{rem}
\subsection{Linear/algebraic independence results of the KMPLs}
As applications of Theorem \ref{changcri} and our period interpretation of the KMPLs in \eqref{periodkmpl}, we discuss some linear/algebraic independence results among the depth 2 KMPLs. Furthermore, we compare the KMPLs with other periods and show that the Kochubei multizeta values do not have Eulerian/zeta-like indices with Kochubei zeta values and Carlitz periods.

For the depth 2 KMPLs, we have the following linear independence result.
\begin{thm}\label{lininddkmpl}
For indices $\mathfrak{s}\in\mathbb{Z}_{>0}^2$ with $\wt(\mathfrak{s})=w$ and $\alpha\in\overline{k}^{\times}$ with $|\alpha|_{\infty}<q^{s_1}$, $Li_{K, \mathfrak{s}}(\alpha)$ are $\overline{k}$-linearly independent.
\end{thm}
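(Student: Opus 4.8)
The plan is to realise every weight-$w$ depth-two value as a coordinate of the period vector of one pre-$t$-motive and then to run Chang's refined criterion (Theorem~\ref{changcri}), exactly as in the proofs of Theorems~\ref{transthm}--\ref{lindepdiffalpha}; the extra work is the disentanglement of the poles produced by the various indices of common weight $w$. Writing $[j]:=\theta^{q^j}-\theta$ and using the one-variable convention under which $Li_{K,(s_1,s_2)}(\alpha)$ is the value at $t=\theta$ of $\mathcal{Li}_{K,(s_1,s_2)}(\alpha,1)$, I would first assemble the single column
\[
\psi=\Omega^{w}\cdot\bigl(1,\ \mathcal{Li}_{K,1}(\alpha),\dots,\mathcal{Li}_{K,w-1}(\alpha),\ \mathcal{Li}_{K,(1,w-1)}(\alpha,1),\dots,\mathcal{Li}_{K,(w-1,1)}(\alpha,1)\bigr)^{\mathrm{tr}}.
\]
The depth-one companions $\mathcal{Li}_{K,s_1}(\alpha)$ are forced on us by the Frobenius relations \eqref{defkplfrobdiff} and \eqref{deformkmpl}, which express the $(-1)$-twist of each depth-two series through its companion of the same leading index. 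Using $\Omega^{(-1)}=(t-\theta)\Omega$ these relations give $\psi^{(-1)}=\Phi\psi$ for a lower-triangular $\Phi\in\Mat_{2w-1}(\overline{k}[t])$ with every diagonal entry $(t-\theta)^{w}$, and $\psi\in\Mat_{(2w-1)\times1}(\mathbb{E})$ by \cite[Proposition~3.1.3]{ABP04}. Since $\det\Phi=(t-\theta)^{w(2w-1)}$ we have $\det\Phi(0)\neq0$ and $\det\Phi(\theta^{q^{-i}})=(\theta^{q^{-i}}-\theta)^{w(2w-1)}\neq0$ for all $i\geq1$, so $\xi=\theta$ meets the hypotheses of Theorem~\ref{changcri}.

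Suppose, for contradiction, that $\sum_{s_1=1}^{w-1}c_{s_1}Li_{K,(s_1,w-s_1)}(\alpha)=0$ with $c_{s_1}\in\overline{k}$ not all zero. As the relevant coordinates of $\psi(\theta)$ are $\tilde{\pi}^{-w}$ times $1$, the $Li_{K,s_1}(\alpha)$, and the $Li_{K,(s_1,w-s_1)}(\alpha)$, this is a relation $\rho\psi(\theta)=0$ with $\rho$ supported on the depth-two coordinates. Theorem~\ref{changcri}(1) lifts it to $P\in\Mat_{1\times(2w-1)}(\overline{k}[t])$ with $P(\theta)=\rho$ and $P\psi=0$; dividing by $\Omega^{w}$ yields the functional identity
\[
P_0(t)+\sum_{s_1=1}^{w-1}Q_{s_1}(t)\,\mathcal{Li}_{K,s_1}(\alpha)+\sum_{s_1=1}^{w-1}R_{s_1}(t)\,\mathcal{Li}_{K,(s_1,w-s_1)}(\alpha,1)=0,
\]
where $R_{s_1}(\theta)=c_{s_1}$ and $P_0(\theta)=Q_{s_1}(\theta)=0$. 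It therefore suffices to show that every polynomial $R_{s_1}$ vanishes identically.

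I would extract this from the behaviour at $t=\theta^{q^{N}}$ as $N\to\infty$, peeling off pole orders from the top $m=w-1$ downward. At $t=\theta^{q^N}$ the series $\mathcal{Li}_{K,s_1}(\alpha)$ has a pole of order $s_1$ with leading coefficient $\alpha^{q^{N}}$, while $\mathcal{Li}_{K,(s_1,w-s_1)}(\alpha,1)$ has order $\max(s_1,w-s_1)$, its order-$s_1$ part (from $i_1=N$) having leading coefficient $\alpha^{q^{N}}\sum_{0<i_2<N}(\theta^{q^{i_2}}-\theta^{q^{N}})^{-(w-s_1)}$ and its order-$(w-s_1)$ part (from $i_2=N$) leading coefficient $\sum_{i_1>N}\alpha^{q^{i_1}}(\theta^{q^{i_1}}-\theta^{q^{N}})^{-s_1}$. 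After inductively deleting the higher orders, multiplying by $(\theta^{q^{N}}-t)^{m}$ and setting $t=\theta^{q^{N}}$ leaves an exact identity involving only $Q_m(\theta^{q^{N}})$, $R_m(\theta^{q^{N}})$ (through the $i_1=N$ part of $(m,w-m)$) and $R_{w-m}(\theta^{q^{N}})$ (through the $i_2=N$ part of $(w-m,m)$). The main obstacle is precisely this collision: the indices $(m,w-m)$ and $(w-m,m)$ sit at one and the same pole order, so pole order alone cannot separate them. Here the convergence hypothesis $|\alpha|_\infty<q$ (the case $s_1=1$ of $|\alpha|_\infty<q^{s_1}$) is decisive: the $i_2=N$ contribution is a $\overline{k}$-combination of the higher Frobenius powers $\alpha^{q^{N+1}},\alpha^{q^{N+2}},\dots$ and lies at the $\infty$-adic scale $(|\alpha|_\infty q^{-(w-m)})^{q^{N+1}}$, a $q$-th power below the scale $|\alpha|_\infty^{q^{N}}$ of the $\alpha^{q^{N}}$-contributions. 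Combining this strict separation of scales with the degree-and-minimality bookkeeping on the lifted polynomials that drives Theorems~\ref{lindepdifferentn} and \ref{lindepdiffalpha} (choose $N$ with the pivoting polynomial nonzero at $\theta^{q^N}$ and compare valuations) should force $Q_m(\theta^{q^{N}})$, then $R_m(\theta^{q^{N}})$, then $R_{w-m}(\theta^{q^{N}})$ to vanish for all large $N$; having infinitely many zeros these polynomials vanish identically, the order-$m$ terms disappear, and the induction proceeds to order $m-1$. Reaching the middle index makes every $R_{s_1}\equiv0$, whence $c_{s_1}=R_{s_1}(\theta)=0$, the desired contradiction. The genuinely delicate points, which I expect to absorb most of the effort, are checking that evaluating the lifted polynomials at $\theta^{q^N}$ (whose $\infty$-adic size grows like $q^{(\deg)q^N}$) does not swallow the separation of scales, verifying non-vanishing of the inner sums $\sum_{0<i_2<N}(\theta^{q^{i_2}}-\theta^{q^{N}})^{-(w-m)}$, and handling the even-$w$ middle index $(w/2,w/2)$, where both contributions are carried by the single polynomial $R_{w/2}$.
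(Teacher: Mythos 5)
Your architecture (one pre-$t$-motive carrying all weight-$w$ depth-one and depth-two coordinates, Chang's criterion, then local analysis at $t=\theta^{q^{N}}$) is the same skeleton as the paper's proof, and you have correctly located the crux: at a common pole order $m$ one must disentangle the three contributions $Q_{m}(\theta^{q^{N}})\alpha^{q^{N}}$, $R_{m}(\theta^{q^{N}})\,\alpha^{q^{N}}\sum_{0<i<N}(\theta^{q^{i}}-\theta^{q^{N}})^{-(w-m)}$ and $R_{w-m}(\theta^{q^{N}})\sum_{i_{1}>N}\alpha^{q^{i_{1}}}(\theta^{q^{i_{1}}}-\theta^{q^{N}})^{-(w-m)}$. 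The gap is in the tool you propose for this. A separation of $\infty$-adic scales cannot work: the lifted polynomials are completely uncontrolled, and $|g(\theta^{q^{N}})|_{\infty}=|c|_{\infty}\,q^{(\deg g)q^{N}}$ grows at exactly the same type of rate $q^{(\mathrm{const})q^{N}}$ as the gaps between your three scales $|\alpha|_\infty^{q^{N}}$, $(|\alpha|_\infty q^{-(w-m)})^{q^{N}}$ and $(|\alpha|_\infty q^{-(w-m)})^{q^{N+1}}$, so an unexcludable choice of degrees makes two of the terms comparable; and when $\log_{q}|\alpha|_\infty\in\frac{1}{q-1}\mathbb{Z}$ (e.g.\ $\alpha=1$, the Kochubei zeta case) even the leading exponents can coincide. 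The paper separates the third term by a transcendence dichotomy instead: its leading Laurent coefficient is exactly $Li_{K,w-m}(\alpha)^{q^{N}}$, which is transcendental over $k$ by Theorem \ref{transthm} together with Proposition \ref{hgslog}, whereas every other contribution at that pole order lies in $\overline{k}$; hence the polynomial multiplying it must vanish at $\theta^{q^{N}}$. This transcendence input is the ingredient your outline never invokes, and no amount of valuation bookkeeping substitutes for it.

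Two further remarks. First, the paper does not run your full descent through all pole orders: it works only at the single top order $s=\max_{i,j}s_{ij}$, for one $N$ chosen so that the pivotal lifted polynomial is nonzero at $\theta^{q^{N}}$, splitting into cases according to whether one or both of $(s,w-s)$ and $(w-s,s)$ occur in the relation; this avoids both the need to prove that coefficient polynomials vanish identically and the bookkeeping of sub-leading Laurent coefficients that your ``inductive deletion of higher orders'' would force on you. Second, even after the transcendence step one is left with an identity of the shape $Q_{m}(\theta^{q^{N}})+R_{m}(\theta^{q^{N}})\sum_{0<i<N}(\theta^{q^{i}}-\theta^{q^{N}})^{-(w-m)}=0$, and here the paper's argument rests on the non-vanishing of that finite inner sum (your second ``delicate point''); so that particular worry is shared by the published proof, and your scale argument would not have resolved it either.
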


\begin{proof}
We assume on the contrary that among $Li_{K, \mathfrak{s}_i}(\alpha)\ (i=1, \ldots, r)$ with $\mathfrak{s}_i=(s_{i1}, s_{i2})$ ($\mathfrak{s}_i\neq\mathfrak{s}_j$ for $i\neq j$), there exists a $\overline{k}$-linear relation:
\begin{align}\label{dep2eqn}
	f_1Li_{K, \mathfrak{s}_1}(\alpha)+\cdots+f_rLi_{K, \mathfrak{s}_r}(\alpha)=0
\end{align}
for some $f_i\in\overline{k}^{\times}$. For
\[
\Phi_i=
\begin{pmatrix}
(t-\theta)^w                     & 0                   & 0 \\
(-1)^{s_{i2}}(t-\theta)^{s_{i1}} & (t-\theta)^w        & 0 \\ 
0                                & (-1)^{s_{i1}}\alpha(t-\theta)^{s_{i2}} & (t-\theta)^{w}
\end{pmatrix}\in\Mat_{3}(\overline{k}[t])
\]
and
\[
\psi_i=
\begin{pmatrix}   
\Omega^w         \\
\Omega^w\mathcal{L}_{K, s_{i1}}(\alpha)  \\ 
\Omega^w\mathcal{L}_{K, \mathfrak{s}_i}(\alpha)     
\end{pmatrix}\in\Mat_{3\times 1}(\mathbb{E}),
\]
we define
$\Phi=
\begin{pmatrix}
\Phi_1 &         &        &  \\
			                 & \Phi_2  &        &  \\ 
	   						 &         & \ddots &  \\
     						 &		   &		& \Phi_r
\end{pmatrix}
$
and
$\psi=
\begin{pmatrix}
\psi_1   \\
\psi_2   \\
\vdots   \\
\psi_r 
\end{pmatrix}.
$
Then, we can apply Theorem \ref{changcri} (1) to \eqref{dep2eqn} and obtain the following $\overline{k}[t]$-linear relation
\begin{align*}
	g_1(t)\mathcal{L}_{K, \mathfrak{s}_1}(\alpha)+\cdots+g_r(t)\mathcal{L}_{K, \mathfrak{s}_r}(\alpha)=0
\end{align*}
with $g_i(t)\in\overline{k}[t]$ such that $g_i(\theta)=f_i$. We set $s=\max\{ s_{ij}\ |\ i=1, \ldots, r\ \text{and}\ j=1,2 \}$. For some $i$, there are indices
$\mathfrak{s}_i=(w-s, s)$ or $(s, w-s)$ with $s\geq w-s$. When $s=w-s$, the equation \eqref{dep2eqn} becomes $f_iLi_{K, \mathfrak{s}_i}(\alpha)=0$; however, this contradicts Theorem \ref{transthm}. When $s\neq w-s$, we have three cases and again get contradictions as follows. 

{\it Case 1}: $\mathfrak{s}_i=(w-s, s)$ for some $i$ and $\mathfrak{s}_j\neq (s, w-s)$ with $j\neq i$, we set $N>0$ such that $g_i(\theta^{q^N})\neq 0$. Then, we can set $i=1$ without loss of generality and obtain
\begin{align*}
&(\theta^{q^N}-t)^s\Bigl(g_1(t)\mathcal{L}_{K, \mathfrak{s}_1}(\alpha)+\cdots+g_r(t)\mathcal{L}_{K, \mathfrak{s}_r}(\alpha) \Bigr)\\
&=(\theta^{q^N}-t)^s\Bigl(g_1(t)\sum_{i_1=N>i_2>0}\frac{\alpha^{q^{i_1}}}{(\theta^{q^{i_1}}-t)^s(\theta^{q^{i_2}}-t)^{w-s}}+g_1(t)\sum_{\substack{i_1>i_2>0\\ i_1\neq N}}\frac{\alpha^{q^{i_1}}}{(\theta^{q^{i_1}}-t)^s(\theta^{q^{i_2}}-t)^{w-s}}\\
&\quad +g_2(t)\mathcal{L}_{K, \mathfrak{s}_2}(\alpha)+\cdots+g_r(t)\mathcal{L}_{K, \mathfrak{s}_r}(\alpha) \Bigr)=0.
\end{align*}
By substituting $t=\theta^{q^N}$, we obtain $g_1(\theta^{q^{N}})\sum_{N>i_2>0}\alpha^{q^N}/(\theta^{q^{i_2}}-\theta^{q^N})^{w-s}=0$. This contradicts the assumption that $g_1(\theta^{q^N})$ and that $\alpha$ and $\sum_{N>i_2>0}\alpha^{q^N}/(\theta^{q^{i_2}}-\theta^{q^N})^{w-s}$ are nonzero. 

{\it Case 2}: if $\mathfrak{s}_i=(s, w-s)$ for some $i$ and $\mathfrak{s}_j\neq (w-s, s)$ with $j\neq i$, in the same way of Case 1, we obtain $g_1(\theta^{q^{N}})\sum_{N>i_2>0}\alpha^{q^N}/(\theta^{q^{i_2}}-\theta^{q^N})^{w-s}=0$ and the same contradiction. 

{\it Case 3}: if $\mathfrak{s}_i=(s, w-s)$ and $\mathfrak{s}_j=(w-s, s)$ for some $i$ and $j$, we can set $i=1, j=2$ without loss of generality and set $N>0$ such that both $g_1(\theta^{q^N}),\ g_2(\theta^{q^N})$ are nonzero. We have
\begin{align*}
&(\theta^{q^N}-t)^s\Bigl(g_1(t)\mathcal{L}_{K, \mathfrak{s}_1}(\alpha)+\cdots+g_r(t)\mathcal{L}_{K, \mathfrak{s}_r}(\alpha) \Bigr)\\
&=(\theta^{q^N}-t)^s\Bigl(g_1(t)\sum_{i_1=N>i_2>0}\frac{\alpha^{q^{i_1}}}{(\theta^{q^{i_1}}-t)^s(\theta^{q^{i_2}}-t)^{w-s}}+g_1(t)\sum_{\substack{i_1>i_2>0\\ i_1\neq N}}\frac{\alpha^{q^{i_1}}}{(\theta^{q^{i_1}}-t)^s(\theta^{q^{i_2}}-t)^{w-s}}\\
&\quad +g_2(t)\sum_{i_1>N=i_2>0}\frac{\alpha^{q^{i_1}}}{(\theta^{q^{i_1}}-t)^{w-s}(\theta^{q^{i_2}}-t)^{s}}+g_2(t)\sum_{\substack{i_1>i_2>0\\ i_2\neq N}}\frac{\alpha^{q^{i_1}}}{(\theta^{q^{i_1}}-t)^{w-s}(\theta^{q^{i_2}}-t)^{s}}\\
&\quad+g_3(t)\mathcal{L}_{K, \mathfrak{s}_3}(\alpha)+\cdots+g_r(t)\mathcal{L}_{K, \mathfrak{s}_r}(\alpha) \Bigr)=0.
\end{align*}
By substituting $t=\theta^{q^N}$ into the above equation, we obtain
\[
	g_1(\theta^{q^N})\sum_{N>i_2>0}\frac{\alpha^{q^N}}{(\theta^{q^{i_2}}-\theta^{q^N})^{w-s}}+g_2(\theta^{q^N})Li_{K,w-s}(\alpha)^{q^N}=0.
\]
This contradicts the transcendence of $Li_{K,w-s}(\alpha)$ shown by Theorem \ref{transthm}. 

Thus, we obtain the desired independence result.
\end{proof}
Based on this theorem, it follows that the dimension of the space generated by KMPLs with depth 2 and weight $w\geq 1$ is $2^{w-1}$.

We introduced the sum-shuffle relation for the KMPLs \eqref{shufflekmpl} in Section 0.2. This allows us to translate $\overline{k}$-algebraic relations to $\overline{k}$-linear relations for KMPLs. By using this relation and refined ABP criterion, we can have algebraic independence of the KMPLs with depths 1 and 2, as the following.
\begin{thm}\label{alginddkmpl}
For $s_2\geq s_1>0$, let $\alpha_1, \alpha_2, \alpha_3\in\overline{k}^{\times}$ with $|\alpha_1|_{\infty}, |\alpha_3|_{\infty}<q^{s_1}$ and $|\alpha_2|_{\infty}<q^{s_2}$. Then $Li_{K, (s_1, s_2)}(\alpha_1, \alpha_2)$ and $Li_{K, s_2}(\alpha_3)$ are algebraically independent over $\overline{k}$.
\end{thm}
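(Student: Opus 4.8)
The plan is to argue by contradiction, converting an algebraic relation into a linear one via the sum-shuffle relation \eqref{shufflekmpl} and then applying Chang's refined criterion exactly as in the proofs of Theorems \ref{transthm} and \ref{lininddkmpl}. Suppose $v_1:=Li_{K,(s_1,s_2)}(\alpha_1,\alpha_2)$ and $v_2:=Li_{K,s_2}(\alpha_3)$ satisfy a nontrivial polynomial relation $\sum_{(a,b)} c_{a,b} v_1^a v_2^b=0$ over $\overline{k}$. Writing $w_1=s_1+s_2$ and $w_2=s_2$, the monomial $v_1^a v_2^b$ has weight $aw_1+bw_2$, and I first reduce to a single weight: the relation must hold weight-by-weight, so for some weight $W$ one obtains a nontrivial homogeneous relation $\sum_{aw_1+bw_2=W} c_{a,b} v_1^a v_2^b=0$. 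Expanding each monomial by repeated use of \eqref{shufflekmpl} then yields a nontrivial $\overline{k}$-linear relation among finitely many KMPL values $Li_{K,\mathfrak{t}}(\boldsymbol\beta)$, all of the common weight $W$, where the $\boldsymbol\beta$ are monomials in $\alpha_1,\alpha_2,\alpha_3$.

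Because every index now has weight $W$, every associated period in \eqref{periodkmpl} carries the common factor $\tilde{\pi}^{W}$; multiplying the linear relation by $\tilde{\pi}^{W}$ therefore produces a genuine $\overline{k}$-linear relation among the period entries $\tilde{\pi}^{W} Li^{\ast}_{K,\mathfrak{t}}(\boldsymbol\beta)$ (after rewriting $Li$ into $Li^{\ast}$ via \eqref{kmplperkey1} and \eqref{kmplperkey2}). I then assemble the block-diagonal pre-$t$-motive $\Phi:=\bigoplus_{\mathfrak{t},\boldsymbol\beta}\Phi_{\mathfrak{t},\boldsymbol\beta}$ with solution $\Psi:=\bigoplus_{\mathfrak{t},\boldsymbol\beta}\Psi_{\mathfrak{t},\boldsymbol\beta}$ as in \eqref{periodkmpl}. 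Each block is lower triangular with diagonal $(t-\theta)^{W}$, so $\det\Phi(0)\neq 0$ and $\det\Phi(\theta^{q^{-i}})\neq 0$ for all $i\geq 1$; taking $\xi=\theta\in\overline{k}^{\times}\setminus\overline{\mathbb{F}_q}^{\times}$, Theorem \ref{changcri}(1) lifts the relation among the values $\Psi^{-1}|_{t=\theta}$ to a $\overline{k}[t]$-linear relation $\sum_{\mathfrak{t},\boldsymbol\beta} g_{\mathfrak{t},\boldsymbol\beta}(t)\,\mathcal{Li}_{K,\mathfrak{t}}(\boldsymbol\beta)=0$ among the deformation series, with $g_{\mathfrak{t},\boldsymbol\beta}(\theta)$ recovering the original coefficients.

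The contradiction is then extracted by the pole analysis at $t=\theta^{q^{N}}$ used for Theorem \ref{lininddkmpl}: for large $N$ I pick out the term carrying the strictly largest pole order at $t=\theta^{q^{N}}$, multiply through by the corresponding power of $(\theta^{q^{N}}-t)$, and specialize to isolate a single surviving contribution. Matching the leading component then forces a lower-depth or lower-weight KMPL value, or a power of a nonzero $\alpha_i$, to vanish, contradicting either Theorem \ref{transthm} or $\alpha_i\in\overline{k}^{\times}$.

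The main obstacle is guaranteeing that, after the sum-shuffle expansion, there is a uniquely determined leading term to isolate. Two points require care. First is the weight-homogeneity reduction: passing from the algebraic relation to period relations is legitimate only within a fixed weight, since periods of different weights differ by incompatible powers of $\tilde{\pi}$, so one must justify that relations among KMPL values respect the weight grading. Second, within a fixed weight the depth filtration selects the fully-interleaved shuffle terms of maximal depth $2a+b$, and when $s_2>s_1$ this maximum is attained at a unique monomial; but in the degenerate case $s_1=s_2$ all monomials of a given weight share the same maximal depth, and here the distinctness of the argument $\alpha_3$ from $\alpha_1,\alpha_2$ must be used to separate the competing top-depth terms. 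Controlling this combinatorics---and in particular proving the weight-graded separation---is the crux of the argument.
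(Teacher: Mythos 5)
There is a genuine gap, and it sits exactly where you locate "the crux": the reduction to a weight-homogeneous relation. No result in the paper (or elsewhere in this setting) tells you that an algebraic relation among $Li_{K,(s_1,s_2)}(\alpha_1,\alpha_2)$ and $Li_{K,s_2}(\alpha_3)$ splits along the weight grading of the monomials; that separation statement is essentially as strong as the theorem you are trying to prove, so deferring it leaves the argument circular. The paper's proof never performs this reduction. Instead it keeps all monomials $X^iY^j$ in one system and forces them to a common weight artificially: each block is twisted by $(t-\theta)^{W-\wt(\mathfrak{s}_{ij,1})}$ in $\Phi$ and by $\Omega^{W-\wt(\mathfrak{s}_{ij,1})}$ in $\Psi$, where $W=\wt(\mathfrak{s}_{i'j',1})$ is the top weight (attained at the maximal $i+j=l$, with maximal $i$). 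Chang's criterion then yields a single $\overline{k}[t]$-relation $\sum_{i,j}g_{ij}(t)\sum_h\mathcal{Li}_{K,\mathfrak{s}_{ij,h}}(\boldsymbol\alpha_{ij,h})=0$ containing \emph{all} weights, and the lower-weight terms are eliminated only at the final specialization step, because a $\mathcal{Li}_{K,\mathfrak{t}}$ of weight $<ls_2$ cannot have a pole of order $ls_2$ at $t=\theta^{q^N}$. So the weight separation is an output of the pole analysis, not an input.

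Your second difficulty is also resolved the opposite way from what you propose. The terms carrying the \emph{largest} pole order at $t=\theta^{q^N}$ are not the fully-interleaved shuffle terms of maximal depth $2a+b$ (those have every component equal to a single $s_i$, hence pole order at most $s_2$); they are the minimal-depth, fully-merged depth-$2$ terms, where the weight concentrates into a component of size $ls_2$, namely $\mathcal{Li}_{K,(is_1,\,is_2+js_2)}(\alpha_1,\alpha_2\alpha_3)$ with $i+j=l$ (and additionally $\mathcal{Li}_{K,(is_1+js_2,\,is_2)}(\alpha_1\alpha_3,\alpha_2)$ when $s_1=s_2$). After multiplying by $(\theta^{q^N}-t)^{ls_2}$ and setting $t=\theta^{q^N}$, the surviving contribution is $\sum_{i+j=l}g_{ij}(\theta^{q^N})\bigl(\alpha_2\alpha_3 Li_{K,is_1}(\alpha_1)\bigr)^{q^N}$ (plus an extra finite sum when $s_1=s_2$), and taking $q^N$-th roots gives a $\overline{k}$-linear relation among $1$ and the KPLs $Li_{K,is_1}(\alpha_1)$ of \emph{distinct} weights $is_1$. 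The contradiction therefore comes from Theorem \ref{lindepdifferentn} via Remark \ref{kplgon}, not from Theorem \ref{transthm} or from the nonvanishing of some $\alpha_i$ as your sketch suggests. Without the block-twisting device and with the dominant-term selection inverted, the proposal as written does not go through.
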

\begin{proof}
We assume that $Li_{K, (s_1, s_2)}(\alpha_1, \alpha_2)$ and $Li_{K, s_2}(\alpha_3)$ satisfy the following algebraic relation:
\begin{align}\label{keyalgeqn}
	\sum_{\substack{n\geq i\geq 0\\ m\geq j\geq 0}}f_{ij}X^iY^j=0
\end{align}
where $f_{ij}\in\overline{k}$, $X=Li_{K, (s_1, s_2)}(\alpha_1, \alpha_2)$ and $Y=Li_{K, s_2}(\alpha_3)$. 
According to the sum-shuffle relation \eqref{shufflekmpl}, we have $X^iY^j=Li_{K, (s_1, s_2)}(\alpha_1, \alpha_2)^iLi_{K, s}(\alpha_3)^j=\sum_{r_{ij}\geq h\geq 1}Li_{K, \mathfrak{s}_{{ij,h}}}({\boldsymbol \alpha_{ij,h}})$
for some indices $\mathfrak{s}_{ij,h}$ with $\wt(\mathfrak{s}_{ij,h})=i(s_1+s_2)+js_2$ and coordinates ${\boldsymbol \alpha}_{ij, h}$ whose entries are algebraic points. Without loss of generality, for each $(i,j)$, we can set $\mathfrak{s}_{ij,1}=(is_1+js_2, is_2)$ and $\mathfrak{s}_{ij,2}=(is_1, is_2+js_2)$. Then, it follows that $\dep(\mathfrak{s}_{ij,h})>2$ for $h>2$ by the description of \eqref{shufflekmpl} in introduction. Here we remark that for each $(i,j)$, $\wt(\mathfrak{s}_{ij,1})=\wt(\mathfrak{s}_{ij,2})=\cdots=\wt(\mathfrak{s}_{ij,r_{ij}})$ but $\mathfrak{s}_{ij, 1}\neq \mathfrak{s}_{ij, h}\ (h\neq 1)$ and $\mathfrak{s}_{ij, 2}\neq \mathfrak{s}_{ij, h}\ (h\neq 2)$. 
The algebraic relation \eqref{keyalgeqn} is written as the following linear relation:
\begin{align}\label{keyalgind}
	\sum_{\substack{n\geq i\geq 0\\ m\geq j\geq 0}}f_{ij}\bigl(\sum_{r_{ij}\geq h\geq 1}Li_{K, \mathfrak{s}_{ij,h}}({\boldsymbol \alpha_{ij,h}})\bigr)=0.
\end{align}
Here $r_{00}=1$ and $Li_{K, \mathfrak{s}_{00,1}}=1$.
We set $l=\max\{i+j\ |\ n\geq i\geq 0, m\geq j\geq 0,\ \text{and}\ f_{ij}\neq 0 \}$ and $i'=\max\{ n\geq i\geq 0\ |\ i+j=l,\ m\geq j\geq 0 \}$. Then for $j'=l-i'$, we obtain $\wt(\mathfrak{s}_{i'j',1})=\cdots =\wt(\mathfrak{s}_{i'j', r_{i'j'}})=\max\{\wt(\mathfrak{s}_{ij,h})\ |\ n\geq i\geq 0, m\geq j\geq 0, r_{ij}\geq h\geq 0\}$.

For the square matrices $M_i$, we set the notation $\bigoplus_{n\geq i\geq 1}M_i$ to be $\begin{pmatrix}
M_1 &                &  \\
	   						 &          \ddots &  \\
     						 &		   		& M_n
\end{pmatrix}$ 
and for the column vectors ${\bf v}_i$, we set $\bigoplus_{n\geq i\geq 1}{\bf v}_i$ to be $\begin{pmatrix}
{\bf v}_1  \\
	   						          \vdots  \\
     						 		   		 {\bf v}_n
\end{pmatrix}$. By using these notations, we define the block diagonal matrix $\Phi=\bigoplus_{\substack{n\geq i\geq 0\\ m\geq j\geq 0}}\Phi_{ij}$ and the column vector $\psi=\bigoplus_{\substack{n\geq i\geq 0\\ m\geq j\geq 0}}\psi_{ij}$ 
where $\Phi_{00}=(t-\theta)^{i'(s_1+s_2)+j's_2}\in\overline{k}[t]$, $\psi_{00}=\Omega^{\wt(\mathfrak{s}_{i'j',1})}\in\mathbb{E}$ and for $(i,j)\neq (0,0)$,
$$\Phi_{ij}=(t-\theta)^{\wt(\mathfrak{s}_{i'j',1})-\wt(\mathfrak{s}_{ij,1})}
\begin{pmatrix}
\Phi_{\mathfrak{s}_{ij,1}, {\boldsymbol \alpha_{ij,1}}} &         &        &  \\
			                 & \Phi_{\mathfrak{s}_{ij,2}, {\boldsymbol \alpha_{ij,2}}}  &        &  \\ 
	   						 &         & \ddots &  \\
     						 &		   &		& \Phi_{\mathfrak{s}_{ij, r_{ij}}, {\boldsymbol \alpha_{ij,r_{ij}}}}
\end{pmatrix}\in\Mat_{r_{ij}}(\overline{k}[t])
$$
$$
\psi_{ij}=\Omega^{\wt(\mathfrak{s}_{i'j',1})-\wt(\mathfrak{s}_{ij,1})}
\begin{pmatrix}
\psi_{\mathfrak{s}_{ij,1}, {\boldsymbol \alpha_{ij,1}}}   \\
\psi_{\mathfrak{s}_{ij,2}, {\boldsymbol \alpha_{ij,2}}}   \\
\vdots   \\
\psi_{\mathfrak{s}_{ij,r_{ij}}, {\boldsymbol \alpha_{ij, r_{ij}}}}
\end{pmatrix}\in\Mat_{r_{ij}\times 1}(\mathbb{E}).
$$
By \eqref{periodkmpl}, we have $\Psi^{(-1)}=\Phi\Psi$ and we can apply Theorem \ref{changcri} for \eqref{keyalgind}. Then we obtain the following:
\begin{align}\label{keyalgdeform0}
\Omega^{\wt(\mathfrak{s}_{i'j',1})}\sum_{\substack{n\geq i\geq 0\\ m\geq j\geq 0}}g_{ij}(t)\bigl(\sum_{r_{ij}\geq h\geq 1}\mathcal{L}_{K, \mathfrak{s}_{ij,h}}({\boldsymbol \alpha_{ij,h}})\bigr)=0
\end{align}
where $g_{ij}(t)\in\overline{k}[t]$ such that $g_{ij}(\theta)=f_{ij}$. 
Dividing both sides of equation \eqref{keyalgdeform0} by $\Omega^{\wt(\mathfrak{s}_{i'j',1})}$, we get 
\begin{align}\label{keyalgdeform}
\sum_{\substack{n\geq i\geq 0\\ m\geq j\geq 0}}g_{ij}(t)\bigl(\sum_{r_{ij}\geq h\geq 1}\mathcal{L}_{K, \mathfrak{s}_{ij,h}}({\boldsymbol \alpha_{ij,h}})\bigr)=0
\end{align}

Let $N>0$ such that $g_{ij}(\theta^{q^N})\neq 0$ for all $i, j$. After multiplying by $(\theta^{q^N}-t)^{ls_2}$ on both sides of \eqref{keyalgdeform}, we rewrite the left--hand side by
\begin{align*} 
	&(\theta^{q^N}-t)^{ls_2}\Bigl\{\sum_{\substack{n\geq i\geq 0\\ m\geq j\geq 0\\ i+j=l}}g_{ij}(t)\Bigl(\sum_{r_{ij}\geq h\geq 1}\mathcal{L}_{K, \mathfrak{s}_{ij,h}}({\boldsymbol \alpha_{ij,h}})\Bigr)+\sum_{\substack{n\geq i\geq 0\\ m\geq j\geq 0\\ i+j<l}}g_{ij}(t)\Bigl(\sum_{r_{ij}\geq h\geq 1}\mathcal{L}_{K, \mathfrak{s}_{ij,h}}({\boldsymbol \alpha_{ij,h}})\Bigr)\Bigr\}\\
	&=(\theta^{q^N}-t)^{ls_2}\Bigl\{\sum_{\substack{n\geq i\geq 0\\ m\geq j\geq 0\\ i+j=l}}g_{ij}(t)\Bigl(\mathcal{L}_{K, \mathfrak{s}_{ij,1}}({\boldsymbol \alpha_{ij,1}})+\mathcal{L}_{K, \mathfrak{s}_{ij,2}}({\boldsymbol \alpha_{ij,2}})+\sum_{r_{ij}\geq h\geq 3}\mathcal{L}_{K, \mathfrak{s}_{ij,h}}({\boldsymbol \alpha_{ij,h}})\Bigr)\\
	&\quad +\sum_{\substack{n\geq i\geq 0\\ m\geq j\geq 0\\ i+j<l}}g_{ij}(t)\Bigl(\sum_{r_{ij}\geq h\geq 1}\mathcal{L}_{K, \mathfrak{s}_{ij,h}}({\boldsymbol \alpha_{ij,h}})\Bigr)\Bigr\}\\	
	&=(\theta^{q^N}-t)^{ls_2}\Bigl\{\sum_{\substack{n\geq i\geq 0\\ m\geq j\geq 0\\ i+j=l}}g_{ij}(t)\Bigl(\mathcal{L}_{K, (is_1+js_2, is_2)}(\alpha_1\alpha_3, \alpha_2)+\mathcal{L}_{K, (is_1, is_2+js_2)}(\alpha_1, \alpha_2\alpha_3)+\sum_{r_{ij}\geq h\geq 3}\mathcal{L}_{K, \mathfrak{s}_{ij,h}}({\boldsymbol \alpha_{ij,h}})\Bigr)\\
	&\quad +\sum_{\substack{n\geq i\geq 0\\ m\geq j\geq 0\\ i+j<l}}g_{ij}(t)\Bigl(\sum_{r_{ij}\geq h\geq 1}\mathcal{L}_{K, \mathfrak{s}_{ij,h}}({\boldsymbol \alpha_{ij,h}})\Bigr)\Bigr\}.
\end{align*}
Then we have
\begin{align}\label{keykeyalgind}
&(\theta^{q^N}-t)^{ls_2}\Bigl\{\sum_{\substack{n\geq i\geq 0\\ m\geq j\geq 0\\ i+j=l}}g_{ij}(t)\Bigl(\mathcal{L}_{K, (is_1+js_2, is_2)}(\alpha_1\alpha_3, \alpha_2)+\mathcal{L}_{K, (is_1, is_2+js_2)}(\alpha_1, \alpha_2\alpha_3)+\sum_{r_{ij}\geq h\geq 3}\mathcal{L}_{K, \mathfrak{s}_{ij,h}}({\boldsymbol \alpha_{ij,h}})\Bigr)\\
	&\quad +\sum_{\substack{n\geq i\geq 0\\ m\geq j\geq 0\\ i+j<l}}g_{ij}(t)\Bigl(\sum_{r_{ij}\geq h\geq 1}\mathcal{L}_{K, \mathfrak{s}_{ij,h}}({\boldsymbol \alpha_{ij,h}})\Bigr)\Bigr\}=0.\nonumber
\end{align}
At $t=\theta^{q^N}$, $\mathcal{L}_{K, (is_1+js_2, is_2)}(\alpha_1\alpha_3, \alpha_2)$ (when $s_2=s_1$) and $\mathcal{L}_{K, (is_1, is_2+js_2)}(\alpha_1, \alpha_2\alpha_3)$ with $i+j=l$ have a pole with order $ls_2$. Indeed, we have 
\[
\mathcal{L}_{K, (is_1+js_2, is_2)}(\alpha_1\alpha_3, \alpha_2)=\sum_{\substack{i_1>i_2>0\\i_1\neq N}}\frac{\alpha_1^{q^{i_1}}\alpha_2^{q^{i_2}}}{(\theta^{q^{i_1}}-t)^{is_1+js_2}(\theta^{q^{i_2}}-t)^{is_2}}+\sum_{N=i_1>i_2>0}\frac{\alpha_1^{q^{i_1}}\alpha_2^{q^{i_2}}}{(\theta^{q^{i_1}}-t)^{is_1+js_2}(\theta^{q^{i_2}}-t)^{is_2}}
\] 
and 
\[
\mathcal{L}_{K, (is_1, is_2+js_2)}(\alpha_1, \alpha_2\alpha_3)=\sum_{\substack{i_1>i_2>0\\i_2\neq N}}\frac{\alpha_1^{q^{i_1}}\alpha_2^{q^{i_2}}}{(\theta^{q^{i_1}}-t)^{is_1}(\theta^{q^{i_2}}-t)^{is_2+js_2}}+\sum_{i_1>N=i_2>0}\frac{\alpha_1^{q^{i_1}}\alpha_2^{q^{i_2}}}{(\theta^{q^{i_1}}-t)^{is_1}(\theta^{q^{i_2}}-t)^{is_2+js_2}}.
\]
Then, for $i+j=l$, we obtain $\{(\theta^{q^N}-t)^{ls_2}\mathcal{L}_{K, (is_1, is_2+js_2)}(\alpha_1, \alpha_2\alpha_3)\}|_{t=\theta^{q^N}}=\bigl(\alpha_2\alpha_3Li_{K, is_1}(\alpha_1)\bigr)^{q^N}$ when $s_1<s_2$ and
 $\{(\theta^{q^N}-t)^{ls_2}\mathcal{L}_{K, (is_1+js_2, is_2)}(\alpha_1\alpha_3, \alpha_2)\}|_{t=\theta^{q^N}}=(\alpha_1\alpha_3)^{q^N}\sum_{N>i_2>0}\frac{\alpha_2^{q^{i_2}}}{(\theta^{q^{i_2}}-\theta^{q^N})^{is_2}}$ when $s_1=s_2$.    
On the other hand, the other terms in the equation \eqref{keykeyalgind} have a pole with an order which is less than $ls_2$. Because the other $\mathcal{L}_{K, \mathfrak{s}_{ij,h}}$ satisfy the case $i+j<l$ and $0<h$, or the case $i+j=l$ and $2<h$. Here we remind that when $h>2$, $\dep(\mathfrak{s}_{ij,h})>2$ as shown in the beginning of our proof. 
Therefore by substituting $t=\theta^{q^N}$ in \eqref{keykeyalgind}, we get 
\[
	0=\begin{cases}
		&\sum_{\substack{n\geq i\geq 0\\ m\geq j\geq 0\\ i+j=l}}g_{ij}(\theta^{q^N})\bigl(\alpha_2\alpha_3Li_{K, is_1}(\alpha_1)\bigr)^{q^N}\quad \text{if $s_1<s_2$},\\
		&\sum_{\substack{n\geq i\geq 0\\ m\geq j\geq 0\\ i+j=l}}g_{ij}(\theta^{q^N})\Bigl\{(\alpha_1\alpha_3)^{q^N}\sum_{N>i_2>0}\frac{\alpha_2^{q^{i_2}}}{(\theta^{q^{i_2}}-\theta^{q^N})^{is_2}}+\bigl(\alpha_2\alpha_3 Li_{K, is_1}(\alpha_1)\bigr)^{q^N} \Bigr\} \quad \text{if $s_1=s_2$}.
	  \end{cases}	
\]
By taking the $q^N$-th root of the above two equations, we obtain $\overline{k}$-linear relations among 1 and the KPLs with different weights in both the $s_2>s_1$ and $s_2=s_1$ cases. This contradicts Remark \ref{kplgon}, which follows from Theorem \ref{lindepdifferentn}. Therefore we get the desired result.
\end{proof}

Furthermore, we can consider the linear independence of the KMPLs with other periods. For example, we can compare Carlitz polylogarithms and KPLs as follows.

\begin{thm}\label{linindkplcpl}
Let $\alpha, \beta\in\overline{k}^{\times}$ such that $|\alpha|_{\infty}<q^n$, $|\beta|_{\infty}\in q^{nq/(q-1)}$. Then $Li_{K,n}(\alpha), Li_{C,n}(\beta)$ are $\overline{k}$-linearly independent.
\end{thm}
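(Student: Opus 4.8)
The plan is to realize both values simultaneously as periods of a single (block diagonal) pre-$t$-motive and then feed a hypothetical $\overline{k}$-linear relation into Chang's refined criterion (Theorem~\ref{changcri}) to obtain a $\overline{k}[t]$-linear relation among the corresponding deformation series, from which a contradiction is extracted by a pole-order and magnitude analysis at the points $t=\theta^{q^N}$. Suppose $f_1\,Li_{K,n}(\alpha)+f_2\,Li_{C,n}(\beta)=0$ with $f_1,f_2\in\overline{k}$. Since $Li_{K,n}(\alpha)\neq 0$ (by \eqref{nonvanhgf}, being a THGF value) and $Li_{C,n}(\beta)\neq 0$, the vanishing of one coefficient forces the other; so it suffices to reach a contradiction assuming $f_1,f_2\in\overline{k}^\times$.

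For the setup I would take, on the Kochubei side, the motive of Example~\ref{kplmotive}: $\psi^{(K)}=(\Omega^n,\ \Omega^n\mathcal{Li}_{K,n}(\alpha))^{\mathrm{tr}}$ with $\Phi^{(K)}=\left(\begin{smallmatrix}(t-\theta)^n & 0\\ (-1)^n\alpha & (t-\theta)^n\end{smallmatrix}\right)$. On the Carlitz side I would use the classical representation: with $\mathcal{Li}_{C,n}(\beta):=\sum_{i\geq 1}\beta^{q^i}/\mathbb{L}_i^{\,n}$, the twist rule $\mathbb{L}_i^{(-1)}=(t-\theta)\mathbb{L}_{i-1}$ gives $(t-\theta)^n\mathcal{Li}_{C,n}(\beta)^{(-1)}=\beta+\mathcal{Li}_{C,n}(\beta)$, so that $\psi^{(C)}=(\Omega^n,\ \Omega^n\mathcal{Li}_{C,n}(\beta))^{\mathrm{tr}}$ solves $\psi^{(C),(-1)}=\Phi^{(C)}\psi^{(C)}$ with $\Phi^{(C)}=\left(\begin{smallmatrix}(t-\theta)^n & 0\\ \beta & 1\end{smallmatrix}\right)$; here $\mathcal{Li}_{C,n}(\beta)\in\mathbb{T}$ exactly because $|\beta|_\infty<q^{nq/(q-1)}$, and $\mathcal{Li}_{C,n}(\beta)|_{t=\theta}=Li_{C,n}(\beta)-\beta$. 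Forming $\Phi=\Phi^{(K)}\oplus\Phi^{(C)}$ and $\psi=(\psi^{(K)};\psi^{(C)})$, one has $\det\Phi=(t-\theta)^{3n}$, hence $\det\Phi(0)\neq 0$ and $\det\Phi(\theta^{q^{-i}})\neq 0$ for all $i\geq 1$, and $\psi\in\Mat_{4\times 1}(\mathbb{E})$ by \cite[Proposition~3.1.3]{ABP04}.

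Since $\psi(\theta)=(\tilde\pi^{-n},\ \tilde\pi^{-n}Li_{K,n}(\alpha),\ \tilde\pi^{-n},\ \tilde\pi^{-n}(Li_{C,n}(\beta)-\beta))^{\mathrm{tr}}$, the assumed relation is $\rho\psi(\theta)=0$ with $\rho=(f_2\beta,\ f_1,\ 0,\ f_2)$, the common factor $\tilde\pi^{-n}$ and the algebraic shift $\beta$ being absorbed into the first coordinate. Chang's criterion then yields $P=(P_1,P_2,P_3,P_4)\in\Mat_{1\times 4}(\overline{k}[t])$ with $P(\theta)=\rho$ and $P\psi=0$; dividing by $\Omega^n$ gives
\[
Q_1(t)+Q_2(t)\,\mathcal{Li}_{K,n}(\alpha)+Q_3(t)\,\mathcal{Li}_{C,n}(\beta)=0,
\]
where $Q_2=P_2$, $Q_3=P_4$ satisfy $Q_2(\theta)=f_1\neq 0$, $Q_3(\theta)=f_2\neq 0$, so $Q_2,Q_3$ are nonzero polynomials.

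The hard part is that both series have a pole of order \emph{exactly} $n$ at $t=\theta^{q^N}$, so---unlike the THGF arguments of Theorems~\ref{transthm}--\ref{lindepdiffalpha}---one cannot separate them by pole order and must instead compare leading Laurent coefficients. Multiplying the identity by $(t-\theta^{q^N})^n$ and specializing at $t=\theta^{q^N}$ annihilates $Q_1$ and leaves
\[
(-1)^n Q_2(\theta^{q^N})\,\alpha^{q^N}+Q_3(\theta^{q^N})\,R_N=0,\qquad R_N:=\sum_{i\geq N}\frac{\beta^{q^i}}{\prod_{j=1,\,j\neq N}^{i}(\theta^{q^N}-\theta^{q^j})^n}.
\]
For the Kochubei series only the index $i=N$ is singular, so its coefficient is the clean term $(-1)^n\alpha^{q^N}$; for the Carlitz series every $i\geq N$ contributes. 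Here the hypothesis $|\beta|_\infty<q^{nq/(q-1)}$ is exactly what makes the $i=N$ term dominate, giving the ultrametric estimate $|R_N|_\infty=|\beta|_\infty^{q^N}\,q^{-n(N-1)q^N}$. Taking $N$ large enough that $Q_2(\theta^{q^N}),Q_3(\theta^{q^N})\neq 0$ and comparing $\infty$-adic absolute values, the first term has $\log_q|\cdot|_\infty=(\deg Q_2+\log_q|\alpha|_\infty)\,q^N+O(1)$ (a fixed multiple of $q^N$), whereas the second has $\log_q|\cdot|_\infty=(\deg Q_3+\log_q|\beta|_\infty-n(N-1))\,q^N+O(1)$, whose coefficient of $q^N$ tends to $-\infty$. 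Thus for $N\gg 0$ the two terms have distinct absolute values and cannot cancel, contradicting $\alpha\in\overline{k}^\times$. Hence $f_1=f_2=0$, which is the asserted $\overline{k}$-linear independence.
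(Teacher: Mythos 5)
Your proof is correct, and the setup (realizing both values as periods of a pre-$t$-motive built from $\Omega^n$, $\Omega^n\mathcal{Li}_{K,n}(\alpha)$, $\Omega^n\mathcal{Li}_{C,n}(\beta)$ and feeding the relation into the ABP/Chang criterion) matches the paper's; the differences are in the packaging and, more substantively, in the endgame. The paper uses a single $3\times 3$ matrix sharing one copy of $\Omega^n$ and includes the constant term $\beta$ in its deformation $\mathcal{Li}_{C,n}$, whereas you use a $4\times 4$ block sum and absorb the missing $\beta$ into the vector $\rho$ --- both are fine, and since $\det\Phi=c(t-\theta)^s$ the original Theorem~\ref{abpcri} already suffices, as the paper notes. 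The real divergence is the final contradiction: the paper multiplies the $\overline{k}[t]$-relation by $\mathbb{L}_N^n$, which kills every Kochubei term except $i=N$ and reassembles the Carlitz tail into exactly $Li_{C,n}(\beta)^{q^N}$, so that specializing at $t=\theta^{q^N}$ exhibits $Li_{C,n}(\beta)$ as an element of $\overline{k}$, contradicting the known transcendence of Carlitz polylogarithm values (\cite[Theorem 5.4.3]{C14}). You instead multiply only by $(t-\theta^{q^N})^n$, accept the residual tail $R_N$, and rule out cancellation by the ultrametric estimate $|R_N|_\infty=|\beta|_\infty^{q^N}q^{-n(N-1)q^N}$ (valid precisely because $|\beta|_\infty<q^{nq/(q-1)}$ makes the $i=N$ term strictly dominant) against the at-worst-exponential-in-$q^N$ size of $Q_2(\theta^{q^N})\alpha^{q^N}$. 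Your route is self-contained --- it needs no external transcendence input --- at the cost of a quantitative valuation computation; the paper's route is shorter but leans on Chang's transcendence theorem. Both estimates check out, and your reduction to the case $f_1,f_2\in\overline{k}^\times$ via non-vanishing of the two polylogarithm values is also sound.
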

\begin{proof}
For the pre-$t$-motive defined by $\Phi=\begin{pmatrix}  (t-\theta)^n & 0            & 0 \\
															(-1)^n\alpha     & (t-\theta)^n & 0 \\ 
															\beta^{(-1)}(t-\theta)^n & 0 & 1 \end{pmatrix}\in\Mat_{3}(\overline{k}[t])$,
we have $\psi^{(-1)}=\Phi\psi$ for $\psi=\begin{pmatrix} \Omega^n \\
														 \Omega^n\mathcal{Li}_{K,n}(\alpha)\\
														 \Omega^n\mathcal{Li}_{C,n}(\beta)  \end{pmatrix}\in\Mat_{3\times 1}(\mathbb{E})$
where $\mathcal{Li}_{C,n}(\beta)=\beta+\sum_{i>0}\beta^{q^i}/\mathbb{L}_{i}^{n}$ that satisfies $\mathcal{Li}_{C,n}(\beta)^{(-1)}=\beta^{(-1)}+\mathcal{Li}_{C,n}(\beta)/(t-\theta)^{n}$.														 														
We assume on the contrary that $f_1Li_{K,n}(\alpha)+f_2Li_{C,n}(\beta)=0$ for some $f_i\in\overline{k}^{\times}\ (i=1,2)$. Then, we have $\tilde{\pi}^{-n}\Bigl(g_1Li_{K,n}(\alpha)+g_2Li_{C,n}(\beta)\Bigr)=0$. Based on Theorem \ref{abpcri}, this relation can be extended as follows:
\begin{align}\label{keyklcl}
	g_1(t)\Omega^n\mathcal{Li}_{K, n}(\alpha)+g_2(t)\Omega^n\mathcal{Li}_{C,n}(\beta)=0.
\end{align}
For some $g_i(t)\in\overline{k}[t]$ with $g_i(\theta)=f_i\ (i=1,2)$. Let $N\in\mathbb{Z}_{>0}$ such that $g_1(\theta^{q^N})\neq 0$. After multiplying by $\mathbb{L}_N^n\Omega^{-n}$ on both sides, we can rewrite \eqref{keyklcl} as follows:
\begin{align}\label{keyklclln}
	&g_1(t)\biggl\{\mathbb{L}_N^n\sum_{i=1}^{N-1}\frac{\alpha^{q^i}}{(\theta^{q^i}-t)^n}+(-1)^n\mathbb{L}_{N-1}^n\alpha^{q^N}+\mathbb{L}_N^n\sum_{i>N}\frac{\alpha^{q^i}}{(\theta^{q^i}-t)^n}\biggr\}\\
	&+g_2(t)\biggl\{(t-\theta^q)^n\cdots(t-\theta^{q^N})^n\beta+(t-\theta^{q^2})\cdots(t-\theta^{q^N})^n\beta^{q}\cdots+(t-\theta^{q^N})^n\beta^{q^{N-1}}\nonumber\\
	&+\sum_{i>N}\frac{\beta^{q^i}}{(t-\theta^{q^{N+1}})^n\cdots(t-\theta^{q^i})^n}\biggr\}=0.\nonumber	
\end{align}
We have $\sum_{i>N}\frac{\beta^{q^i}}{(t-\theta^{q^{N+1}})^n\cdots(t-\theta^{q^i})^n}|_{t=\theta^{q^N}}=Li_{C,n}(\alpha)^{q^N}-\alpha^{q^N}$. Therefore, by substituting $t=\theta^{q^N}$, the equation \eqref{keyklclln} becomes
\[
	g_1(\theta^{q^N})\bigr((\theta^{q^N}-\theta^q)^n\cdots(\theta^{q^N}-\theta^{q^{N-1}})^n\bigr)\alpha^{q^N}+g_2(\theta^{q^N})Li_{C,n}(\alpha)^{q^N}-g_2(\theta^{q^N})\beta^{q^N}=0.
\]
This forces $Li_{C,n}(\beta)=g_2(\theta^{q^N})^{-1/q^N}\{g_1(\theta^{q^N})\beta^{q^N}-g_2(\theta^{q^N})\}^{1/q^N}\in\overline{k}$ while $Li_{C,n}(\beta)$ is transcendental over $k$ by \cite[Theorem 5.4.3]{C14}. Therefore, we obtain a contradiction and the desired $\overline{k}$-linear independence result.
\end{proof}

Next, we compare the KMPLs, Carlitz period and KPLs as the following two theorems.

\begin{thm}\label{noneulerian}
Let $\mathfrak{s}=(s_{1}, \ldots, s_{r})\in\mathbb{Z}_{>0}^r$ with $wt(\mathfrak{s})=w$ and ${\boldsymbol \alpha}=(\alpha_1, \ldots, \alpha_r)\in(\overline{k}^{\times})^r$ with $|\alpha_i|_{\infty}<q^{s_i}\ (i=1, \ldots, r) $. Then, $Li_{K,\mathfrak{s}}({\boldsymbol \alpha})$ and $\tilde{\pi}^w$ are $\overline{k}$-linearly independent.
\end{thm}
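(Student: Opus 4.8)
The plan is to argue by contradiction, feeding a purported relation into Chang's refined criterion (Theorem \ref{changcri}) applied to an \emph{augmented} version of the pre-$t$-motive $\Phi_{\mathfrak{s},\boldsymbol{\alpha}}$ of \eqref{periodkmpl}, and then to extract a contradiction from the pole behaviour at the points $t=\theta^{q^N}$. First I would prepend a trivial block, setting $\tilde\Phi:=\begin{pmatrix}1&0\\0&\Phi_{\mathfrak{s},\boldsymbol{\alpha}}\end{pmatrix}$ and letting $\tilde\psi$ be the first column of $\Psi_{\mathfrak{s},\boldsymbol{\alpha}}$ with a leading $1$ adjoined, so that $\tilde\psi=(1,\Omega^{w},\Omega^{w}\mathcal{Li}_{K,s_r}(\alpha_r),\dots,\Omega^{w}\mathcal{Li}_{K,\mathfrak{s}}(\boldsymbol{\alpha}))^{\mathrm{tr}}\in\Mat_{(r+2)\times 1}(\mathbb{E})$. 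Since $1^{(-1)}=1$, equation \eqref{periodkmpl} gives $\tilde\psi^{(-1)}=\tilde\Phi\tilde\psi$, and as $\det\tilde\Phi$ equals $(t-\theta)^{w(r+1)}$ up to a constant, the choice $\xi=\theta$ satisfies $\det\tilde\Phi(\theta^{q^{-i}})\neq 0$ for all $i\ge 1$. Assuming a relation $f_1\tilde\pi^{w}+f_2\,Li_{K,\mathfrak{s}}(\boldsymbol{\alpha})=0$ with $(f_1,f_2)\neq(0,0)$, the hypothesis $Li_{K,\mathfrak{s}}(\boldsymbol{\alpha})\neq 0$ forces $f_1,f_2\in\overline{k}^{\times}$; multiplying by $\tilde\pi^{-w}=\Omega^{w}|_{t=\theta}$ rewrites it as $f_1+f_2\tilde\pi^{-w}Li_{K,\mathfrak{s}}(\boldsymbol{\alpha})=0$, i.e. $\rho\tilde\psi(\theta)=0$ for $\rho=(f_1,0,\dots,0,f_2)$.

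Chang's criterion \ref{changcri}(1) then produces $P=(P_0,\dots,P_{r+1})\in\Mat_{1\times(r+2)}(\overline{k}[t])$ with $P(\theta)=\rho$ and $P\tilde\psi=0$. Dividing the identity $P\tilde\psi=0$ by $\Omega^{w}$ yields
\[
P_0\,\Omega^{-w}+P_1+\sum_{i=1}^{r}P_{i+1}\,\mathcal{Li}_{K,(s_{r-i+1},\dots,s_r)}(\alpha_{r-i+1},\dots,\alpha_r)=0,
\]
where $P_0(\theta)=f_1\neq 0$, so $P_0\not\equiv 0$. I would now read off orders at $t=\theta^{q^N}$ for large $N$. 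Since $\Omega$ has a simple zero at each $\theta^{q^i}$, the term $P_0\Omega^{-w}$ has a pole of order exactly $w$ there (as $P_0(\theta^{q^N})\neq 0$ for all but finitely many $N$), while each $\mathcal{Li}_{K,(s_{r-i+1},\dots,s_r)}$ has a pole of order at most $\max_{1\le j\le r}s_j$.

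When $r\ge 2$ we have $\max_{j}s_j\le w-1<w$, so the order-$w$ pole of $P_0\Omega^{-w}$ cannot be cancelled by the remaining terms, a contradiction. The residual case $r=1$ (where $\max_j s_j=w=s_1$, so that $P_2\,\mathcal{Li}_{K,s_1}(\alpha_1)$ could match the pole) is the main obstacle. Here I would instead eliminate $\Omega^{-s_1}$ by a single Frobenius twist: using $(\Omega^{-s_1})^{(-1)}=(t-\theta)^{-s_1}\Omega^{-s_1}$ together with \eqref{defkplfrobdiff}, a suitable $\overline{k}[t]$-combination of the relation and its $(-1)$-twist removes $\Omega^{-s_1}$ and gives $Q_1+Q_2\,\mathcal{Li}_{K,s_1}(\alpha_1)=0$ with $Q_1,Q_2\in\overline{k}[t]$ and $Q_2=P_0^{(-1)}P_2-(t-\theta)^{s_1}P_0P_2^{(-1)}$. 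If $Q_2=0$ then $R:=P_2/P_0$ satisfies the same twisted equation $X=(t-\theta)^{s_1}X^{(-1)}$ as $\Omega^{-s_1}$, whence $R=c\,\Omega^{-s_1}$ for some $c\in\mathbb{F}_q(t)$; as $\Omega$ is transcendental over $\overline{k}(t)$ while $R$ is rational, this forces $P_2\equiv 0$, and then the relation degenerates to $P_0\Omega^{-s_1}=-P_1$, again impossible unless $P_0\equiv 0$ (contradicting $P_0(\theta)=f_1\neq 0$). Hence $Q_2\neq 0$, so $\mathcal{Li}_{K,s_1}(\alpha_1)=-Q_1/Q_2\in\overline{k}(t)$; but $\mathcal{Li}_{K,s_1}(\alpha_1)=\sum_{i\ge 1}\alpha_1^{q^i}/(\theta^{q^i}-t)^{s_1}$ has poles at infinitely many $\theta^{q^i}$ (with nonzero leading coefficients $\alpha_1^{q^i}$) and so is not rational, the final contradiction.

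I expect the genuinely delicate step to be the $r=1$ elimination and the verification that the twisted equation $X=(t-\theta)^{s_1}X^{(-1)}$ has no nonzero rational solution; the $r\ge 2$ case is a routine order count once the augmented functional equation $\tilde\psi^{(-1)}=\tilde\Phi\tilde\psi$ is set up and the relation is lifted and divided by $\Omega^{w}$.
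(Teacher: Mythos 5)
Your proposal is correct and shares the paper's overall strategy (augment the pre-$t$-motive of \eqref{periodkmpl} by a trivial block, lift the relation via the ABP/Chang criterion, and extract a contradiction from orders of vanishing at $t=\theta^{q^N}$), but it diverges from the paper's proof in two substantive ways. First, you retain all components $P_0,\dots,P_{r+1}$ of the lifted relation and dispose of the intermediate terms $P_{i+1}\mathcal{Li}_{K,(s_{r-i+1},\dots,s_r)}$ by the bound $\mathrm{ord}_{t=\theta^{q^N}}\leq\max_j s_j$, whereas the paper writes the lifted relation with only the two coefficients corresponding to the original $\rho$; your version is the more careful one. Second, and more importantly, you isolate the case $r=1$: there $\max_j s_j=w$, so the pole of $P_2\,\mathcal{Li}_{K,s_1}(\alpha_1)$ at $\theta^{q^N}$ has order $w$ and the order count collapses — this is exactly the point where the paper's assertion that the $\mathcal{Li}$ terms ``have poles of order strictly less than $w$'' fails, and the paper offers no substitute argument. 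Your elimination of $\Omega^{-s_1}$ by one Frobenius twist, reducing to $Q_1+Q_2\mathcal{Li}_{K,s_1}(\alpha_1)=0$ and then to the non-rationality of $\mathcal{Li}_{K,s_1}(\alpha_1)$ (infinitely many poles $\theta^{q^i}$ with leading coefficients $\alpha_1^{q^i}\neq0$), genuinely completes this case. One soft spot: in the subcase $Q_2=0$ you assert that a solution of $X=(t-\theta)^{s_1}X^{(-1)}$ in $\overline{k}(t)$ must be $c\,\Omega^{-s_1}$ with $c\in\mathbb{F}_q(t)$; a priori the fixed field argument only gives $c\in\mathbb{F}_q((t))$. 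It is cleaner to kill this case directly: writing $R=f/g$ in lowest terms, the equation $fg^{(-1)}=(t-\theta)^{s_1}f^{(-1)}g$ is impossible for $R\neq0$ since twisting preserves degree in $t$ while the right-hand side has degree larger by $s_1\geq1$. Finally, note that your use of $Li_{K,\mathfrak{s}}(\boldsymbol{\alpha})\neq0$ to force $f_1\neq0$ matches the hypothesis stated in Theorem \ref{intthmkmpl}(iv) (it is silently omitted in the restatement), and is indeed needed.
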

\begin{proof}
We assume on the contrary that there exists a nontrivial $\overline{k}$-linear equation $f_1\tilde{\pi}^w+f_2Li_{K, \mathfrak{s}}({\boldsymbol \alpha})=0$. With \eqref{periodkmpl}, we have $\psi^{(-1)}=\Phi\psi$, where

\begin{align*}
    \Phi=\begin{pmatrix}
    \Phi_{\mathfrak{s}, \boldsymbol{\alpha}} & \\
    										 & 1
    \end{pmatrix}\in\Mat_{r+2}(\overline{k}[t]), \quad \text{and}\quad
\psi=\begin{pmatrix}
		\Omega^{w}  					     \\
    \Omega^w\mathcal{Li}_{K,(s_r)}(\alpha_r)   \\
    \Omega^w\mathcal{Li}_{K,(s_{r-1}, s_r)}(\alpha_{r-1}, \alpha_r)     \\
    \vdots                                  \\
    \vdots                                  \\
    \Omega^w\mathcal{Li}_{K,\mathfrak{s}}({\boldsymbol \alpha})  \\
    1 
	 \end{pmatrix}\in\Mat_{r+2\times 1}(\mathbb{E}).
\end{align*}
Then, by using Theorem \ref{abpcri}, we can obtain the following $\overline{k}[t]$-linear relation
\begin{align}\label{keyeulerian}
	g_1(t)\Omega^w\mathcal{Li}_{K,\mathfrak{s}}({\boldsymbol \alpha})+g_2(t)\cdot 1=0
\end{align}	
for some $g_i(t)\in\overline{k}[t]\ (i=1,2)$ such that $g_i(\theta)=f_i$. By taking the $(-1)$-fold Frobenius twist of \eqref{keyeulerian}, we obtain
\begin{align}\label{keyeuleriantwisted}
	&g_1(t)^{(-1)}(-1)^{s_r}(t-\theta)^{w-s_r}\Omega^w\alpha_r\mathcal{Li}_{K,(s_1, \ldots, s_{r-1})}(\alpha_1, \ldots, \alpha_{r-1})+g_1(t)^{(-1)}(t-\theta)^w\Omega^w\mathcal{Li}_{K,\mathfrak{s}}({\boldsymbol \alpha})\\
	&+g_2(t)^{(-1)}=0.\nonumber
\end{align}
We set $N\in\mathbb{Z}_{>0}$ such that $g_2(t)^{(-1)}$ is nonzero at $t=\theta^{q^N}$. By definition, $\Omega^w$ has a zero at $t=\theta^{q^N}$ with an order $w$, while the terms of both $\mathcal{Li}_{K,(s_1, \ldots, s_{r-1})}(z_1, \ldots, z_{r-1})$ and $\mathcal{Li}_{K,\mathfrak{s}}({\bf z})$ have poles at $t=\theta^{q^N}$ with orders strictly less than $w$. Thus, $\Omega^w\mathcal{Li}_{K,(s_1, \ldots, s_{r-1})}(z_1, \ldots, z_{r-1})$ and $\Omega^w\mathcal{Li}_{K,\mathfrak{s}}({\bf z})$ vanish at $t=\theta^{q^N}$. Then, by substituting $t=\theta^{q^N}$ on both sides of \eqref{keyeuleriantwisted}, we obtain the contradiction $g_2(t)^{(-1)}|_{t=\theta^{q^N}}=0$. Therefore, we obtain the desired result.
\end{proof}

\begin{thm}\label{nonzetalike}
Let $\mathfrak{s}=(s_{1}, \ldots, s_{r})\in\mathbb{Z}_{>0}^r$
such that $wt(\mathfrak{s})=w$. For $\boldsymbol{\alpha}=(\alpha_1, \ldots, \alpha_r)$ with $|\alpha_i|_{\infty}<q^{s_i}$ and $|\beta|_{\infty}<q^{w}$, $Li_{K,\mathfrak{s}}(\boldsymbol{\alpha})$ and $Li_{K, w}(\beta)$ are $\overline{k}$-linearly independent.
\end{thm}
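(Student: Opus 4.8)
The plan is to reproduce the pre-$t$-motive/ABP strategy used for Theorems \ref{noneulerian} and \ref{alginddkmpl}, building one block-diagonal pre-$t$-motive that simultaneously carries the deformation series of the depth-$r$ KMPL $Li_{K,\mathfrak{s}}(\boldsymbol{\alpha})$ and of the depth-$1$ KPL $Li_{K,w}(\beta)$, which share the common weight $w$. Concretely I would set
\[
\Phi=\begin{pmatrix}\Phi_{\mathfrak{s},\boldsymbol{\alpha}} & 0\\ 0 & \Phi_\beta\end{pmatrix}\in\Mat_{r+3}(\overline{k}[t]),\qquad
\Phi_\beta=\begin{pmatrix}(t-\theta)^w & 0\\ (-1)^w\beta & (t-\theta)^w\end{pmatrix},
\]
where $\Phi_{\mathfrak{s},\boldsymbol{\alpha}}$ is the $(r+1)\times(r+1)$ matrix of \eqref{periodkmpl}, and I would take $\psi$ to be the first column of $\Psi_{\mathfrak{s},\boldsymbol{\alpha}}$ stacked on top of $(\Omega^w,\ \Omega^w\mathcal{Li}_{K,w}(\beta))^{tr}$. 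Using \eqref{periodkmpl}, the twisting rule $\Omega^{(-1)}=(t-\theta)\Omega$, and \eqref{defkplfrobdiff}, one checks $\psi^{(-1)}=\Phi\psi$ and $\psi\in\Mat_{(r+3)\times1}(\mathbb{E})$. Since $\Phi_{\mathfrak{s},\boldsymbol{\alpha}}$ is lower triangular with diagonal $(t-\theta)^w$, we get $\det\Phi=(t-\theta)^{w(r+3)}$, a power of $(t-\theta)$; hence the \emph{original} ABP criterion (Theorem \ref{abpcri}) applies.

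Assuming a nontrivial relation $f_1Li_{K,\mathfrak{s}}(\boldsymbol{\alpha})+f_2Li_{K,w}(\beta)=0$ with $f_i\in\overline{k}^\times$ (recall $Li_{K,w}(\beta)\neq0$ by \eqref{nonvanhgf}), I would choose $\rho\in\Mat_{1\times(r+3)}(\overline{k})$ placing $f_1$ on the $\Omega^w\mathcal{Li}_{K,\mathfrak{s}}(\boldsymbol{\alpha})$-slot and $f_2$ on the $\Omega^w\mathcal{Li}_{K,w}(\beta)$-slot. Then, since $\Omega^w|_{t=\theta}=\tilde{\pi}^{-w}$, we have $\rho\psi(\theta)=\tilde{\pi}^{-w}\bigl(f_1Li_{K,\mathfrak{s}}(\boldsymbol{\alpha})+f_2Li_{K,w}(\beta)\bigr)=0$. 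Theorem \ref{abpcri} produces $P\in\Mat_{1\times(r+3)}(\overline{k}[t])$ with $P(\theta)=\rho$ and $P\psi=0$; factoring out the common $\Omega^w$ gives a $\overline{k}[t]$-linear relation among $1$, the tower $\mathcal{Li}_{K,(s_l,\ldots,s_r)}(\alpha_l,\ldots,\alpha_r)$ $(1\le l\le r)$, and $\mathcal{Li}_{K,w}(\beta)$, in which the coefficient $g_2(t)$ of $\mathcal{Li}_{K,w}(\beta)$ satisfies $g_2(\theta)=f_2\neq0$, so $g_2\not\equiv0$.

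The decisive step is a pole-order comparison at $t=\theta^{q^N}$, exactly as in the proof of Theorem \ref{noneulerian}. Every term of $\mathcal{Li}_{K,(s_l,\ldots,s_r)}$ has at $t=\theta^{q^N}$ a pole of order at most $\max_{j\ge l}s_j$; because $\dep(\mathfrak{s})=r\ge2$ one has $\max_j s_j\le w-(r-1)\le w-1<w$, and the lower-weight tower entries ($l\ge2$) even have pole order $\le w-s_1<w$. By contrast $\mathcal{Li}_{K,w}(\beta)=\sum_{i\ge1}\beta^{q^i}/(\theta^{q^i}-t)^w$ has a pole of order exactly $w$ from its $i=N$ term, with $(\theta^{q^N}-t)^w\mathcal{Li}_{K,w}(\beta)\big|_{t=\theta^{q^N}}=\beta^{q^N}$. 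Thus, after multiplying the lifted relation by $(\theta^{q^N}-t)^w$ and specializing $t=\theta^{q^N}$, every term except the $\mathcal{Li}_{K,w}(\beta)$ one vanishes (the polynomial $1$-term too, and $\Omega^w$ already vanishes to order $w$), leaving $g_2(\theta^{q^N})\beta^{q^N}=0$. Choosing $N$ with $g_2(\theta^{q^N})\neq0$ and using $\beta\neq0$ yields the contradiction.

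The point demanding the most care is the pole-order bookkeeping: one must verify that \emph{every} member of the $\mathfrak{s}$-tower — not merely $\mathcal{Li}_{K,\mathfrak{s}}$ itself — contributes pole order strictly less than $w$ at $t=\theta^{q^N}$. This is precisely where the hypothesis $\dep(\mathfrak{s})\ge2$ is essential, through the bound $\max_j s_j\le w-1$; for $r=1$ the statement would reduce to comparing two weight-$w$ KPLs and can genuinely fail, so depth at least $2$ is not a mere convenience. The remaining ingredients (that $\Omega^w$ vanishes to order $w$ and the existence of a good $N$) are routine once this bound is in hand.
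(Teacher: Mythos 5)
Your proposal is correct and follows essentially the same route as the paper: a pre-$t$-motive coupling $\Phi_{\mathfrak{s},\boldsymbol{\alpha}}$ with a weight-$w$ block carrying $\mathcal{Li}_{K,w}(\beta)$, the ABP criterion, and a pole-order comparison at $t=\theta^{q^N}$ that isolates $g_2(\theta^{q^N})\beta^{q^N}$ (the paper merely packages the $\beta$-data as one extra row of an $(r+2)\times(r+2)$ matrix rather than a separate $2\times 2$ block, and inserts an inessential $(-1)$-fold twist before specializing). Your remark that $\dep(\mathfrak{s})\geq 2$ is needed so that every entry of the $\mathfrak{s}$-tower has pole order strictly less than $w$ is accurate; this hypothesis is used implicitly but left unstated in the paper's proof.
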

\begin{proof}
We assume on the contrary that there exists a nontrivial $\overline{k}$-linear relation for some $f_i\in\overline{k}\ (i=1,2)$:
\[
	f_1Li_{K, \mathfrak{s}}(\boldsymbol{\alpha})+f_2Li_{K, w}(\beta)=0.
\]
We define the matrices
$
    \Phi=\begin{pmatrix}
    \Phi_{\mathfrak{s}, \boldsymbol{\alpha}} & \\
    	\boldsymbol{\beta}					& (t-\theta)^w
    \end{pmatrix}\in\Mat_{r+2}(\overline{k}[t])
$
where $\boldsymbol{\beta}=(\beta, 0, \ldots, 0)\in\Mat_{1\times r}(\overline{k})$ and
\begin{align*}
\psi=\begin{pmatrix}
		\Omega^{w}  					     \\
    \Omega^w\mathcal{Li}_{K,s_r}(\alpha_r)    \\
    \Omega^w\mathcal{Li}_{K,s_{r-1}, s_r}(\alpha_{r-1}, \alpha_r)     \\
    \vdots                                  \\
    \vdots                                  \\
    \Omega^w\mathcal{Li}_{K,\mathfrak{s}}({\boldsymbol \alpha})  \\
    \Omega^w\mathcal{Li}_{K,w}(\beta) 
	 \end{pmatrix}\in\Mat_{r+2\times 1}(\mathbb{E}).
\end{align*}
By using Theorem \ref{abpcri}, we obtain a $\overline{k}$-linear equation
$g_1(t)\Omega^w\mathcal{L}_{K, \mathfrak{s}}(\boldsymbol{\alpha})+g_2(t)\Omega^w\mathcal{L}_{K,w}(\beta)=0 $
and then
\[
		g_1(t)\mathcal{L}_{K, \mathfrak{s}}(\boldsymbol{\alpha})+g_2(t)\mathcal{L}_{K,w}(\beta)=0.
\]
By taking the $(-1)$-fold Frobenius twist, we obtain
\begin{align}\label{keyeqnzetalike}
	&g_1(t)^{(-1)}(\theta-t)^{-s_r}\alpha_r\mathcal{L}_{K,(s_1,\ldots,s_{r-1})}(\alpha_1, \ldots, \alpha_{r-1})+g_1(t)^{(-1)}\mathcal{L}_{K,\mathfrak{s}}(\boldsymbol{\alpha})\\
	&\hspace{6.5cm} +g_2(t)^{(-1)}(\theta-t)^{-w}\beta+g_2(t)^{(-1)}\mathcal{L}_{K,w}(\beta)=0.\nonumber
\end{align}
We set $N>0$ such that $g_2(t)^{(-1)}|_{t=\theta^{q^N}}\neq 0$. After multiplying by $(\theta^{q^N}-t)^w$ on both sides of \eqref{keyeqnzetalike}, we obtain
\begin{align*}
	&(\theta^{q^N}-t)^w\Bigl(g_1(t)^{(-1)}(\theta-t)^{-s_r}\alpha_n\mathcal{L}_{K,(s_1,\ldots,s_{r-1})}(\alpha_1, \ldots, \alpha_{r-1})+g_1(t)^{(-1)}\mathcal{L}_{K,\mathfrak{s}}(\boldsymbol{\alpha})\Bigr)\\
	&\hspace{0cm} +(\theta^{q^N}-t)^wg_2(t)^{(-1)}(\theta-t)^{-w}\beta+(\theta^{q^N}-t)^wg_2(t)^{(-1)}\sum_{\substack{i>0\\ i\neq N}}\frac{\beta^{q^i}}{(\theta^{q^i}-t)^w}+g_2(t)^{(-1)}\beta^{q^N}=0.\nonumber
\end{align*}
By substituting $t=\theta^{q^N}$, we obtain a relation $g_2(t)^{(-1)}|_{t=\theta^{q^N}}\beta^{q^N}=0$. This contradicts $g_2(t)^{(-1)}|_{t=\theta^{q^N}}\neq 0$ and $\beta\neq 0$. Therefore we obtain the desired result.
\end{proof}
\section*{Acknowledgements}
The author is grateful to Chieh-Yu Chang and Dinesh Thakur for giving him fruitful comments. Especially he is greatly indebted to Chieh-Yu Chang for suggesting him to the topic of this paper. He is also thankful to Yen-Tsung Chen, Og\u{u}z Gezm\.{i}\c{s}, Daichi Matuzuki and Changningphaabi Namoijam for helpful comments. The author wishes to thank National Center for Theoretical Sciences in Hsinchu where the most part of this paper was written, for the hospitality and JSPS Overseas Research Fellowships for their financial support. This work is also supported by JSPS KAKENHI Grant Number JP22J00006.

\end{document}